\documentclass[11pt]{amsart}
%
%
%
\usepackage{amsmath}
\usepackage{amssymb}
\usepackage{amsfonts}
\usepackage{amsthm}  
\usepackage{latexsym}
\usepackage{graphicx}    
\usepackage{xypic} 
\usepackage{tikz} 
\usepackage{color} 
\usepackage{mathdots}
\usepackage{mathrsfs} 
\usepackage{dutchcal} 
\usepackage{verbatim} 
\usepackage{pdfpages} 
\usepackage{mathtools} 
\usepackage[pdftex,linktocpage=true,pdfborder=true]{hyperref}
\usepackage[nameinlink]{cleveref} 

\crefname{thm}{theorem}{theorems}
\crefname{lem}{lemma}{lemmas}
\crefname{cor}{corollary}{corollaries}
\crefname{prop}{proposition}{propositions}
\crefname{defn}{definition}{definitions}
\crefname{eg}{example}{examples}
\crefname{xca}{exercise}{exercises}
\crefname{conj}{conjecture}{conjectures}
\crefname{rmk}{remark}{remarks}
\crefname{qst}{question}{questions}
\crefname{obs}{observation}{observations}

\newtheorem{thm}{Theorem}[section]
\newtheorem*{thm*}{Theorem} 
\newtheorem{lem}[thm]{Lemma}  
\newtheorem{cor}[thm]{Corollary}
\newtheorem*{cor*}{Corollary}
\newtheorem{prop}[thm]{Proposition}
\theoremstyle{definition}
\newtheorem{defn}[thm]{Definition}

\theoremstyle{remark}
\newtheorem{rmk}[thm]{Remark}

\newtheorem*{note}{Note} 
\numberwithin{equation}{section}

\newcommand{\ip}[2]{\langle #1 , #2 \rangle}    

\newcommand\diag{\operatorname{diag}}   
\newcommand\C{\mathbb C}    
\newcommand\N{\mathbb N}

\newcommand\style{\mathfrak}
\newcommand\tran{{}^t} 
\newcommand\st{\operatorname{St}} 
\newcommand\GL{\mathbf{GL}} 
\newcommand\G{\mathbf{G}} 
\newcommand\Hbf{\mathbf{H}} 
\newcommand\X{\mathbf{X}} 
\newcommand\of{\mathcal O_F} 
\newcommand\Exp{\mathcal{Exp}} 
\newcommand\wt{\widetilde} 

\DeclareMathOperator{\spn}{span} 
\DeclareMathOperator{\Int}{Int}  
\DeclareMathOperator{\Ad}{Ad}  
\DeclareMathOperator{\ind}{Ind} 
\DeclareMathOperator{\irr}{Irr} 
\DeclareMathOperator{\Hom}{Hom} 
\DeclareMathOperator{\Gal}{Gal} 

\DeclarePairedDelimiter{\floor}{\lfloor}{\rfloor}

%
%
%
\begin{document}
\title[Local unitary periods and relative discrete series]{Local unitary periods and relative discrete series}
\author[J.~M.~Smith]{Jerrod Manford Smith}
\address{University of Maine, Department of Mathematics \& Statistics, Orono, Maine}
\email{jerrod.smith@maine.edu}
\thanks{The author was partially supported by the NSERC, Canada Graduate Scholarship and the Ontario Graduate Scholarship programs.}

\subjclass[2010]{Primary 22E50; Secondary 22E35}
\keywords{Relative discrete series, $p$-adic symmetric space, distinguished representation, unitary period, quasi-split unitary group}
\date{June 21, 2018}
\dedicatory{}
\begin{abstract}
Let $F$ be a $p$-adic field ($p\neq 2$), let $E$ be a quadratic Galois extension of $F$, and let $n \geq 2$.
We construct representations in the discrete spectrum of the $p$-adic symmetric space $H \backslash G$, where $G = \mathbf{GL}_{2n}(E)$ and $H = \mathbf{U}_{E/F}(F)$ is a quasi-split unitary group over $F$.
\end{abstract}
\maketitle
\setcounter{tocdepth}{1}
\tableofcontents
\section{Introduction}
Let $F$ be a $p$-adic field ($p\neq 2$) and let $E$ be a quadratic Galois extension of $F$. 
Let $G=\GL_n(E)$ and let $H$ be the group of $F$-points of a quasi-split unitary group $\mathbf{U}_{E/F}$.
In this paper, we are concerned with constructing irreducible representations of $G$ that occur in the discrete spectrum\footnote{To make sense of $L^2_{\operatorname{disc}}(H\backslash G)$, one also needs to take the quotient by the centre $Z_G$ of $G$ and consider square integrable functions on the quotient $Z_G H \backslash G$.  Moreover, we must consider representations that admit a (unitary) central character.} $L^2_{\operatorname{disc}}(H\backslash G)$ of the $p$-adic symmetric space $H\backslash G$. 
Such representations are referred to as relative discrete series (RDS) representations for $H\backslash G$. 
Of course, the issue of characterizing the discrete spectrum is of interest more generally. 
One would eventually strive for a recipe to construct representations in $L^2_{\operatorname{disc}}(H\backslash G)$ for any connected reductive group $\G$ and symmetric subgroup $\Hbf = \G^\theta$, where $\theta$ is an $F$-involution of $\G$.
For the symmetric spaces $\GL_n(F)\times \GL_n(F) \backslash \GL_{2n}(F)$ and $\GL_n(F) \backslash \GL_n(E)$, the author has carried out a construction of RDS, analogous to the one in \Cref{thm-unitary-rds}, in \cite[Theorem 6.3]{smith2018}.
Sakellaridis and Venkatesh have considered (and answered) much more general questions in the harmonic analysis on $p$-adic spherical varieties in \cite{sakellaridis--venkatesh2017}; however, they do not give an explicit description of the discrete spectrum.
Understanding the discrete series for $p$-adic symmetric spaces is a natural first step towards the general picture for spherical varieties.
Moreover, it is known by work of Kato and Takano \cite{kato--takano2010}, that $H$-distinguished\footnote{See \Cref{defn-dist}.} discrete series representations of $G$ are automatically RDS.  We are thus interested in constructing RDS that do not occur in the discrete spectrum of $G$.
 
In the case of number fields, distinction by unitary groups, that is, non-vanishing of the period integral attached to $\mathbf{U}_{E/F}$, has deep connections with quadratic base change \cite{Arthur--Clozel-book}.
The study of global period integrals has largely been pioneered by H.~Jacquet and his collaborators (see, for instance, \cite{jacquet--lai1985,jacquet2001,jacquet2005,jacquet2010,jacquet--lapid--rogawski1999}).
We refer the reader to the work of Feigon, Lapid and Offen \cite{feigon--lapid--offen2012} for a detailed discussion of both local and global aspects of the theory, as well as a very nice treatment of the history of the subject and the contributions of H.~Jacquet, \textit{et al}.  We recall some of the results of \cite{feigon--lapid--offen2012} in \Cref{sec-inducing-ds}; however, we will only state those results that we require to prove our main theorem. We encourage the reader to consult the work of Feigon, Lapid and Offen for more details; including a discussion of the failure of local multiplicity-one \cite[$\S13$]{feigon--lapid--offen2012}.

We now give a statement of the main result of the paper.
Let $n \geq 2$ be an integer.
Let $Q = P_{(n,n)}$ be the upper-triangular parabolic subgroup of $\GL_{2n}(E)$, with standard Levi factorization $L= M_{(n,n)}$, $U = N_{(n,n)}$. 
\begin{thm*}[\Cref{thm-unitary-rds}]
Let $\pi = \iota_Q^G \tau$ be a parabolically induced representation, where $\tau = \tau' \otimes {}^\sigma \tau'$, and $\tau'$ is a discrete series representation of $\GL_{n}(E)$ such that $\tau'$ is not Galois invariant, i.e.,  $\tau'\ncong {}^\sigma \tau'$.
The representation $\pi$ is a relative discrete series representation for ${\bf U}_{E/F}(F)\backslash\GL_{2n}(E)$ that does not occur in the discrete series of $\GL_{2n}(E)$.
\end{thm*}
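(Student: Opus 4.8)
The plan is to establish four things in turn: that $\pi$ is irreducible; that $\pi$ is not essentially square-integrable, hence not in the discrete series of $G$; that $\pi$ is $H$-distinguished; and finally that the $H$-distinguished irreducible $\pi$ satisfies the relative analogue, due to Kato and Takano \cite{kato--takano2010}, of Casselman's square-integrability criterion, which places it in the discrete spectrum of $H\backslash G$. For the first two, write $\tau'\cong\mathrm{St}_k(\rho)\otimes|\det|_E^{s}$ with $\rho$ a unitary supercuspidal representation of some $\GL_m(E)$ and $s\in\R$. Since $|\cdot|_E$ is $\sigma$-invariant, ${}^\sigma\tau'\cong\mathrm{St}_k({}^\sigma\rho)\otimes|\det|_E^{s}$ has the same exponent $s$, and its Zelevinsky segment is the $\sigma$-conjugate of that of $\tau'$. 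If $\rho$ and ${}^\sigma\rho$ lay on a common $|\det|_E^{\Z}$-line, then, both being unitary, we would get $\rho\cong{}^\sigma\rho$ and hence $\tau'\cong{}^\sigma\tau'$, against the hypothesis; so the two segments making up $\tau$ lie in distinct cuspidal lines. In particular they are unlinked, so the Bernstein--Zelevinsky irreducibility criterion gives that $\pi=\tau'\times{}^\sigma\tau'$ is irreducible, and the cuspidal support of $\pi$ --- a union of two segments in distinct lines, not a single segment --- is not that of an essentially square-integrable representation of $\GL_{2n}(E)$; thus $\pi$ does not occur in the discrete series of $G$.

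For distinction I would study the $H$-orbits on $Q\backslash G$, which I identify with the Grassmannian of $n$-dimensional $E$-subspaces of the $2n$-dimensional $\sigma$-Hermitian space attached to $\mathbf{U}_{E/F}$. Because $\mathbf{U}_{E/F}$ is quasi-split with Witt index $n$, the maximal totally isotropic $n$-planes form a single closed $H$-orbit, and the $H$-stabilizer of such a plane is the Siegel parabolic of $\mathbf{U}_{E/F}$, whose Levi factor is isomorphic to $\GL_n(E)$, sitting inside $L=\GL_n(E)\times\GL_n(E)$ (up to conjugacy) as the graph of $g\mapsto{}^\sigma g^{-\mathrm{t}}$. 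Frobenius reciprocity along this closed orbit produces a nonzero element of $\Hom_H(\pi,\C)$ from any nonzero element of $\Hom_{\GL_n(E)}(\mu\otimes\tau|_{\GL_n(E)},\C)$, where $\mu$ is a ratio of modulus characters; a direct computation shows $\mu$ is trivial, since $\delta_Q^{1/2}$ and the modulus character of the Siegel parabolic of $\mathbf{U}_{E/F}$ both restrict to $g\mapsto|\det g|_E^{\,n}$ on this $\GL_n(E)$. Finally, under the standard identification ${}^\sigma\tau'\circ({}^\sigma(\cdot)^{-\mathrm{t}})\cong(\tau')^{\vee}$, the restriction $\tau|_{\GL_n(E)}$ is isomorphic to $\tau'\otimes(\tau')^{\vee}$, which carries the evaluation pairing; hence $\Hom_H(\pi,\C)\neq0$, and I fix a nonzero $\lambda$ in it. This is the special case of the distinction-by-parabolic-induction results of Feigon--Lapid--Offen recalled in \Cref{sec-inducing-ds}; note that this step uses only that $\tau'$ is a discrete series, not that $\tau'\ncong{}^\sigma\tau'$.

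It remains to verify the Kato--Takano criterion: the $H$-distinguished irreducible $\pi$ lies in $L^2_{\operatorname{disc}}(H\backslash G)$ provided that, for every proper $\theta$-split parabolic subgroup $P=MN$ of $G$ and every exponent $\chi$ carried by the $(M\cap H)$-invariant functional $\lambda_P$ on the normalized Jacquet module $r_P\pi$ induced by $\lambda$, the real part of $\chi$ takes strictly negative values on the cone determined by $P$. I would carry this out in four steps: (i) list the $H$-conjugacy classes of proper $\theta$-split parabolics of $\GL_{2n}(E)$ for the quasi-split unitary involution, whose combinatorics is controlled by the restricted root system of the pair $(\GL_{2n},\mathbf{U}_{E/F})$; (ii) for each such $P$, compute $r_P(\iota_Q^G\tau)$ by the geometric lemma as a sum, over the relevant $P$--$Q$ double cosets, of parabolically induced subquotients built from Weyl-translates of $\tau=\tau'\otimes{}^\sigma\tau'$, together with the induced localizations of $\lambda$; (iii) decide which of those subquotients actually support a nonzero $(M\cap H)$-invariant functional; and (iv) for the surviving pieces read off $\chi$ --- it is assembled from the exponents of the discrete series $\tau'$, which are strictly negative by Casselman's criterion for $\GL_n(E)$, shifted by the positive half-sum of the roots in the unipotent radical of $Q$ --- and check a finite system of strict linear inequalities. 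Steps (i), (ii), (iv) are bookkeeping and a bounded computation; the crux, and the place where the hypothesis $\tau'\ncong{}^\sigma\tau'$ does real work, is step (iii): non-Galois-invariance forces the vanishing of exactly those subquotients that would otherwise contribute a boundary exponent --- their survival as distinguished pieces would require a coincidence of cuspidal supports between $\tau'$ and ${}^\sigma\tau'$, equivalently $\tau'\cong{}^\sigma\tau'$ --- leaving only pieces whose exponents satisfy the strict inequality. Assembling these verifications proves the theorem.
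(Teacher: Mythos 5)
Your proposal is correct and follows essentially the same route as the paper: irreducibility and non-square-integrability via the Zelevinsky/segment picture (the paper invokes Bruhat's irreducibility theorem for regular unitary inducing data, but your unlinked-segments argument is equivalent), $H$-distinction via the closed orbit together with the modulus-character identity $\delta_Q^{1/2}\vert_{L^\theta}=\delta_{Q^\theta}$, and the Relative Casselman Criterion verified through the geometric lemma, with the non-boundary exponents handled by Casselman's criterion for the discrete series $\tau$ and the boundary (unitary) exponents discarded because the corresponding subquotients carry no $M^\theta$-invariant functional. The only point to sharpen in your step (iii) is the mechanism: for each maximal $\theta$-split $P=MN$ with Levi associate to $L$, the group $M^\theta$ is (a conjugate of) a product of possibly non-quasi-split unitary groups, so the failure of distinction of the twists of $\tau'\otimes{}^\sigma\tau'$ and ${}^\sigma\tau'\otimes\tau'$ follows from Feigon--Lapid--Offen's theorem that a discrete series of $\GL_n(E)$ is distinguished by a unitary group if and only if it is Galois invariant, rather than from a ``coincidence of cuspidal supports'' as such.
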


\Cref{thm-unitary-rds} is the direct analogue of \cite[Theorem 6.3]{smith2018} and the overall method of proof is the same.
The idea of the proof is to reduce the verification of the Relative Casselman's Criterion (\Cref{rel-casselman-crit}) of Kato and Takano to the usual Casselman's Criterion for the inducing discrete series.
In contrast to the work in \cite{smith2018}, here we can construct representations only by inducing from the maximal parabolic subgroup $P_{(n,n)}$ due to the nature of the symmetric space $H \backslash G$ and the (lack of) existence of $\theta$-elliptic Levi subgroups (\textit{cf}.~\Cref{theta-elliptic-defn}, \Cref{lem-A_T-std-theta-ell-Levi}).  Similarly, the lack of $\theta$-elliptic Levi subgroups prevents us from considering $\GL_n(E)$, when $n$ is odd.

\begin{rmk}
In light of recent work of Rapha\"el Beuzart-Plessis, announced in his 2017 Cours Peccot, our construction of RDS via \Cref{thm-unitary-rds} exhausts all relative discrete series for the symmetric pair ${\bf U}_{E/F}(F)\backslash\GL_{2n}(E)$ that lie outside of $L^2_{\operatorname{disc}}(G)$.  The remaining relative discrete series are the ${\bf U}_{E/F}(F)$-distinguished discrete series representations of $\GL_{2n}(E)$.
We discuss the exhaustion of the discrete spectrum in \Cref{sec-exhaustion}.
\end{rmk}
We also prove the following corollary to \Cref{thm-unitary-rds}.
\begin{cor*}[\Cref{cor-infinitely-many}]
Let $n\geq 4$ be an integer that is not an odd prime. There exist infinitely many equivalence classes of RDS representations of the form constructed in \Cref{thm-unitary-rds} and such that  the discrete series $\tau$ is not supercuspidal.
\end{cor*}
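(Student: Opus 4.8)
The plan is to produce the required family by inducing generalized Steinberg representations, exploiting the Zelevinsky classification of the discrete series of $\GL_n(E)$ together with the abundance of supercuspidal representations of general linear groups of smaller rank over $E$. Since $n\ge 4$ is not an odd prime it is composite, so I would first fix a factorization $n=kd$ with $k,d\ge 2$. Given a supercuspidal representation $\rho$ of $\GL_d(E)$ with unitary central character, write $\tau'_\rho=\St_k(\rho)$ for the corresponding unitarily normalized generalized Steinberg (essentially square-integrable) representation of $\GL_{kd}(E)=\GL_n(E)$; since $k\ge 2$ it is not supercuspidal, and its central character is the $k$-th power of that of $\rho$, hence unitary. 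Because the Galois action $\pi\mapsto{}^\sigma\pi$ commutes with parabolic induction and preserves cuspidal supports, ${}^\sigma\tau'_\rho\cong\St_k({}^\sigma\rho)$, and since the cuspidal support of $\St_k(\rho)$ recovers $\rho$ one has $\tau'_\rho\ncong{}^\sigma\tau'_\rho$ whenever $\rho\ncong{}^\sigma\rho$. Thus each such $\rho$ with $\rho\ncong{}^\sigma\rho$ feeds into \Cref{thm-unitary-rds} and yields a relative discrete series representation $\pi_\rho=\iota_Q^G(\tau'_\rho\otimes{}^\sigma\tau'_\rho)$ for $\mathbf{U}_{E/F}(F)\backslash\GL_{2n}(E)$ that does not lie in the discrete series of $\GL_{2n}(E)$ and whose inducing discrete series $\tau=\tau'_\rho\otimes{}^\sigma\tau'_\rho$ is not supercuspidal (since $\tau'_\rho$ is not).

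Next I would check that $\GL_d(E)$ carries infinitely many pairwise inequivalent supercuspidal representations $\rho$ with unitary central character and $\rho\ncong{}^\sigma\rho$. At least one such $\rho_0$ exists: twist an arbitrary supercuspidal of $\GL_d(E)$ with unitary central character by a character of $E^\times$ through the determinant in order to destroy Galois-invariance — this is possible because the self-twists of a supercuspidal form a finite group while there are plenty of characters $\mu$ of $E^\times$ with $\mu\ncong{}^\sigma\mu$, and one can arrange the central character to remain unitary. The unitary unramified twists $\rho_0\otimes(\chi\circ\det)$ then supply infinitely many more: each again has unitary central character, each remains non-Galois-invariant because an unramified character of $E^\times$ is $\sigma$-invariant (the normalized valuation being $\sigma$-invariant), so ${}^\sigma(\rho_0\otimes\chi\circ\det)\cong{}^\sigma\rho_0\otimes\chi\circ\det\ncong\rho_0\otimes\chi\circ\det$, and the family $\{\rho_0\otimes(\chi\circ\det)\}$ contains infinitely many isomorphism classes (the unramified self-twists of $\rho_0$ forming a finite group).

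Finally I would count. The cuspidal support of $\pi_\rho$ determines the unordered pair $\{\rho,{}^\sigma\rho\}$, and the map $\rho\mapsto\{\rho,{}^\sigma\rho\}$ has fibres of size at most two, so the infinitely many $\rho$ of the previous paragraph yield infinitely many equivalence classes of $\pi_\rho$; each of these has the form produced by \Cref{thm-unitary-rds} with $\tau$ not supercuspidal, which is what the corollary asserts. I expect the only step with any real content to be the construction in the second paragraph of an infinite supply of non-Galois-invariant supercuspidals of $\GL_d(E)$ with unitary central character; this is also where the hypothesis on $n$ enters, since it furnishes a factorization $n=kd$ with $k\ge 2$ (making $\St_k(\rho)$ non-supercuspidal) and $d\ge 2$. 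Everything else is routine bookkeeping with the Zelevinsky classification together with a direct appeal to \Cref{thm-unitary-rds}.
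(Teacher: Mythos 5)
Your argument is correct, and for the statement as given (which, since every $n\geq 4$ that is not an odd prime is composite, only requires the composite case) it does the job. It runs parallel to the paper's proof in its overall shape — the paper also reduces \Cref{cor-infinitely-many} to producing infinitely many non-Galois-invariant, non-supercuspidal discrete series $\tau'$ of $\GL_n(E)$ (\Cref{prop-exist-reg-dist-ds-unitary}), realizes them as generalized Steinberg representations $\st(k,\rho)$, uses \cite[Theorem 9.7(b)]{zelevinsky1980} to see that $\st(k,\rho)$ is Galois invariant iff $\rho$ is (\Cref{prop-gen-st-unitary}) and to separate the resulting induced representations — but you diverge at the one step with real content: the supply of infinitely many pairwise inequivalent non-Galois-invariant supercuspidals of $\GL_d(E)$. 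The paper outsources this to \cite[Theorem 1.1]{Hakim--Murnaghan2002} and \cite[Proposition 10.1]{murnaghan2011a} (\Cref{thm-sc-unitary}), whereas you give a self-contained soft argument: first destroy Galois invariance of an arbitrary supercuspidal by a determinant twist (using that self-twist characters form a finite group while the characters ${}^\sigma\mu\,\mu^{-1}$ range over an infinite set — in effect the same $\ker N_{E/F}$ computation as \Cref{cor-st-not-Gal-inv}), then vary over unramified unitary twists. This buys independence from the Hakim--Murnaghan input at the cost of needing the (standard) existence of a supercuspidal of $\GL_d(E)$ for $d\geq 2$; note you could have avoided even that by allowing $d=1$, $k=n$, i.e.\ non-Galois-invariant twists of the Steinberg representation as in \Cref{cor-st-not-Gal-inv} — that is how the paper also covers prime $n$. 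Two small points of hygiene: your claim that the cuspidal support of $\pi_\rho$ "determines the unordered pair $\{\rho,{}^\sigma\rho\}$" is better phrased as the statement that $\iota_Q^G(\tau_1'\otimes{}^\sigma\tau_1')\cong\iota_Q^G(\tau_2'\otimes{}^\sigma\tau_2')$ forces $\{\tau_1',{}^\sigma\tau_1'\}=\{\tau_2',{}^\sigma\tau_2'\}$ (again \cite[Theorem 9.7(b)]{zelevinsky1980}), which combined with injectivity of $\rho\mapsto\st(k,\rho)$ gives your fibres of size at most two; and to destroy Galois invariance you need ${}^\sigma\mu\,\mu^{-1}$, not merely $\mu\neq{}^\sigma\mu$, to avoid the finite self-twist group — your argument supports this, but the phrasing elides it.
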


We now give an outline of the contents of the paper.  In \Cref{sec-notation}, we set notation and recall basic facts regarding parabolic induction and distinguished representations.
We review the Relative Casselman's Criterion, due to Kato and Takano \cite{kato--takano2010}, in \Cref{sec-back-rds}.  We also discuss the invariant forms on Jacquet modules constructed by Kato and Takano \cite{kato--takano2008}, and Lagier \cite{lagier2008}.
In \Cref{sec-symm-pblc}, we recall the required structural results on $p$-adic symmetric spaces.  In particular, we discuss $(\theta,F)$-split tori, $\theta$-split parabolic subgroups, $\theta$-elliptic Levi subgroups, and the relative root system.
The main results of the paper (\Cref{thm-unitary-rds} and \Cref{cor-infinitely-many}) are stated and proved in \Cref{sec-rds}.  In \Cref{sec-technical} we prove the remaining technical results needed to establish the main theorem.

\section{Notation and preliminaries}\label{sec-notation}
Let $F$ be a nonarchimedean local field of characteristic zero and odd residual characteristic. 
Let $\of$ be the ring of integers of $F$. 
Let $E$ be a quadratic Galois extension of $F$. 
Let $\sigma \in \Gal(E/F)$ the nontrivial element of the Galois group of $E$ over $F$. 

For now, let $\G$ be an arbitrary connected reductive group defined over $F$ and let $G = \G(F)$ denote the group of $F$-points.
We will restrict to the case that $G = \GL_n(E)$ in \Cref{sec-rds} (onwards).
We let $Z_G$ denote the centre of $G$, and let $A_G$ denote the $F$-split component of $Z_G$.
Let $\theta$ be an $F$-involution of $\G$.
Define $\Hbf=\G^\theta$ to be the (closed) subgroup of $\theta$-fixed points of $\G$.
The quotient $H\backslash G$ is a $p$-adic symmetric space.

We will routinely abuse notation and identify an algebraic group defined over $F$ with its group of $F$-points. 
When the distinction is to be made, we will use boldface to denote the algebraic group and regular typeface to denote the group of $F$-points.
For any $F$-torus $\mathbf{A}$, we let $A^1$ denote the group of $\of$-points of $\mathbf A$.

Let $\GL_n$ denote the general linear group of $n$ by $n$ invertible matrices. We write  $\mathbf P_{(\underline{m})}$ for the block-upper triangular parabolic subgroup of $\GL_n$, corresponding to a partition $(\underline{m}) = (m_1,\ldots, m_k)$ of $n$.  The group $\mathbf P_{(\underline{m})}$ has block-diagonal Levi subgroup $\mathbf M_{(\underline{m})} \cong \prod_{i=1}^k \GL_{m_i}$ and unipotent radical $\mathbf N_{(\underline{m})}$.  
We use $\diag(a_1,a_2, \ldots, a_n)$ to denote an $n \times n$ diagonal matrix with entries $a_1,\ldots, a_n$.

For any $g,x \in G$, we write ${}^g x = gxg^{-1}$. For any subset $X$ of $G$, we write ${}^g X = \{ {}^g x : x \in X\}$.
Let $C_G(X)$ denote the centralizer of $X$ in $G$ and let $N_G(X)$ be the normalizer of $X$ in $G$.
Given a real number $r$ we let $\floor{r}$ denote the greatest integer that is less than or equal to $r$. 
We use $\widehat {(\cdot)}$ to denote that a symbol is omitted.  For instance, $\diag(\widehat{a_1}, a_2, \ldots, a_n)$ may be used to denote the diagonal matrix $\diag(a_2,\ldots, a_n)$.
\subsection{Induced representations of $p$-adic groups}
We now briefly review some necessary background of the representation theory of $G$ and discuss the representations that are relevant in the harmonic analysis on $H\backslash G$.
We will only consider representations on complex vector spaces. 
A representation $(\pi,V)$ of $G$ is smooth if for every $v\in V$ the stabilizer of $v$ in $G$ is an open subgroup.  
A smooth representation $(\pi,V)$ of $G$ is admissible if, for every compact open subgroup $K$ of $G$, the subspace $V^K$ of $K$-invariant vectors is finite dimensional. 
All of the representations that we consider are smooth and admissible. 
A quasi-character of $G$ is a one-dimensional representation.  
Let $(\pi,V)$ be a smooth representation of $G$. If $\omega$ is a quasi-character of $Z_G$, then $(\pi,V)$ is an $\omega$-representation if $\pi$ has central character $\omega$.  

Let $P$ be a parabolic subgroup of $G$ with Levi subgroup $M$ and unipotent radical $N$.  
 Given a smooth representation $(\rho, V_\rho)$ of $M$ we may inflate $\rho$ to a representation of $P$, also denoted $\rho$, by declaring that $N$ acts trivially.
 We define the representation $\iota_P^G \rho$ of $G$ to be the (normalized) parabolically induced representation $\ind_P^G (\delta_P^{1/2} \otimes \rho)$.
We normalize by the square root of the modular character  $\delta_P$ of $P$.  The character  $\delta_P$ is given by $\delta_P(p) = | \det \Ad_{\style n}(p) |$, for all $p\in P$, where $\Ad_{\style n}$ denotes the adjoint action of $P$ on the Lie algebra $\style n$ of $N$ \cite{Casselman-book}.
Let $(\pi,V)$ be a smooth representation of $G$.  Let $(\pi_N, V_N)$ denote the normalized Jacquet module of $\pi$ along $P$.
Precisely,  $V_N$ is the quotient of $V$ by the $P$-stable subspace $V(N) = \spn\{ \pi(n)v - v : n\in N , v\in V\}$, and the action of $P$ on $V_N$ is normalized by $\delta_P^{-1/2}$. 
The unipotent radical of $N$ acts trivially on $(\pi_N,V_N)$ and we will regard $(\pi_N, V_N)$ as a representation of the Levi factor $M \cong P/ N$ of $P$.

The Geometric Lemma (\Cref{geom-lem}) is a fundamental tool in the study of induced representations.
Let $P=MN$ and $Q=LU$ be two parabolic subgroups of $G$ with Levi factors $M$ and $L$, and unipotent radicals $N$ and $U$ respectively.
Following \cite{Roche2009}, let
\begin{align*}
S(M,L) = \{ y \in G : M \cap {}^y L\  \text{contains a maximal $F$-split torus of $G$}\}.
\end{align*}
There is a canonical bijection between the double-coset space $P \backslash G / Q$ and the set $M \backslash S(M,L) / L$.
Let $y \in S(M,L)$. The subgroup $M \cap {}^y Q$ is a parabolic subgroup of $M$ and  $P \cap {}^yL$ is a parabolic subgroup of ${}^yL$.
The unipotent radical of $M \cap{}^yQ$ is $M \cap {}^yU$ and the unipotent radical of $P \cap {}^yL$ is $N \cap {}^yL$; moreover, $M \cap {}^yL$ is a Levi subgroup of both $M \cap {}^y Q$ and $P \cap {}^yL$.
Given a representation $\rho$ of $L$, we obtain a representation ${}^y \rho = \rho \circ \Int y^{-1}$ of ${}^yL$.
\begin{lem}[The Geometric Lemma, {\cite[Lemma 2.12]{bernstein--zelevinsky1977}}]\label{geom-lem}
Let $\rho$ be a smooth representation of $L$.  There is a filtration of the space of the representation $(\iota_Q^G \rho)_N$ such that the associated graded object is isomorphic to the direct sum
\begin{align}\label{geom-lem-graded-obj}
\bigoplus_{y \in M \backslash S(M,L) / L} \iota_{M\cap {}^yQ}^M \left( ({}^y\rho)_{N\cap{}^y L} \right).
\end{align}
\end{lem}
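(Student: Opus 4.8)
The plan is to prove this geometrically, in the style of Bernstein and Zelevinsky, by reducing everything to the orbit structure of $P$ acting on $G/Q$. By the Iwasawa decomposition, $G/Q$ is a compact $l$-space, and $\iota_Q^G\rho$ is naturally the space of smooth (global) sections of the $G$-equivariant $l$-sheaf on $G/Q$ whose fibre over the base point $\bar e=Q$ is the space of $\delta_Q^{1/2}\otimes\rho$. Restricting the action to the left action of $P$, the space $G/Q$ decomposes into finitely many locally closed $P$-orbits (finiteness of $P\backslash G/Q$ is the Bruhat decomposition), indexed by the double cosets $P\backslash G/Q$, which the excerpt identifies with $M\backslash S(M,L)/L$ via a set of representatives $y$; the stabilizer in $P$ of the point $\bar y\in G/Q$ is $P\cap {}^yQ$, so the orbit of $\bar y$ is $P$-equivariantly $P/(P\cap {}^yQ)$.

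First I would set up the orbit filtration. Enumerate the $P$-orbits as $X_{y_1},\dots,X_{y_r}$ so that each $U_j=X_{y_1}\cup\cdots\cup X_{y_j}$ is open in $G/Q$; this is possible because there are finitely many locally closed orbits, so at each stage some remaining orbit is open in the complement of the earlier ones. For an equivariant $l$-sheaf on an $l$-space filtered by open subsets, the sections functor produces a $P$-equivariantly filtered vector space whose successive quotients are the spaces of sections of the restricted sheaf on the locally closed strata $X_{y_j}$. Since $X_{y_j}\cong P/(P\cap {}^{y_j}Q)$, the $j$-th graded piece is, as a smooth $P$-representation, a compactly induced module $\cind_{P\cap {}^{y_j}Q}^P(W_{y_j})$, where $W_{y_j}$ is the sheaf fibre at $\bar y_j$, i.e.\ a modular-character twist of ${}^{y_j}(\delta_Q^{1/2}\otimes\rho)$. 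This step is the $p$-adic Mackey restriction formula for $\iota_Q^G\rho|_P$; the only care needed is to track the several modular characters that appear, which with the normalized conventions assemble into exactly the factor $\delta_P^{1/2}$.

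Next I would apply the normalized Jacquet functor $(\,\cdot\,)_N$ along $P$ to the whole filtration. Exactness of $(\,\cdot\,)_N$ turns the $P$-filtration just described into a filtration of $(\iota_Q^G\rho)_N$ as an $M$-representation whose graded object is $\bigoplus_j\bigl(\cind_{P\cap {}^{y_j}Q}^P(W_{y_j})\bigr)_N$. It remains to identify each summand with a normalized parabolic induction on $M$. For this I would use the structural facts recalled just before the lemma: $M\cap {}^yQ$ is a parabolic subgroup of $M$ with Levi $M\cap {}^yL$ and unipotent radical $M\cap {}^yU$; $P\cap {}^yL$ is a parabolic of ${}^yL$ with the same Levi and unipotent radical $N\cap {}^yL$; and, as one checks from these, $P\cap {}^yQ=(M\cap {}^yQ)(N\cap {}^yQ)$ and $N\,(P\cap {}^yQ)=(M\cap {}^yQ)\,N$ inside $P$, so that $(P\cap {}^yQ)N/N\cong M\cap {}^yQ$ inside $M=P/N$. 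Combining these identities with the standard compatibility of the Jacquet functor and compact induction — concretely, a second double-coset computation showing that taking $N$-coinvariants of $\cind_{P\cap {}^yQ}^P(W_y)$ collapses the non-cocompact directions and leaves the normalized induction $\iota_{M\cap {}^yQ}^M$ of the $(N\cap {}^yL)$-coinvariants of ${}^y\rho$ — and then matching up all the normalizations, one gets $\bigl(\cind_{P\cap {}^yQ}^P(W_y)\bigr)_N\cong\iota_{M\cap {}^yQ}^M\bigl(({}^y\rho)_{N\cap {}^yL}\bigr)$. Summing over $j$ gives the asserted graded object.

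The main obstacle is this last identification. The orbit filtration itself is essentially formal once one knows that the $P$-orbits on $G/Q$ are finite in number and locally closed; but computing $\bigl(\cind_{P\cap {}^yQ}^P(W_y)\bigr)_N$ requires the genuinely group-theoretic position identities above — equivalently, that ${}^yQ$ is in good relative position with respect to $P$ — an honest analysis of the $N$-coinvariants of an induced module (in particular, understanding how passing to $N$-coinvariants trades the compact induction $\cind$ on $P$ for the normalized induction $\iota$ on $M$), and a careful reconciliation of all the modular characters so that the final answer is the \emph{normalized} induction of the \emph{normalized} Jacquet module, with no stray twists. Everything else — exactness of $(\,\cdot\,)_N$, compactness of $G/Q$, existence of the orbit ordering, and the sheaf-sections filtration principle — is standard for $l$-groups.
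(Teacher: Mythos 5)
The paper offers no proof of this lemma---it is quoted directly from Bernstein--Zelevinsky---so there is nothing internal to compare against; your sketch is precisely the standard Bernstein--Zelevinsky argument (orbit filtration of the equivariant sheaf on the compact space $G/Q$, Mackey restriction to $P$, exactness of the Jacquet functor, and identification of the $N$-coinvariants of each compactly induced stratum), and it is correct, including your correct location of the real content in the final step $\bigl(\cind_{P\cap {}^yQ}^P(W_y)\bigr)_N\cong\iota_{M\cap {}^yQ}^M\bigl(({}^y\rho)_{N\cap {}^yL}\bigr)$. One small imprecision only: the graded pieces of the filtration by open unions of orbits are the \emph{compactly supported} sections on the locally closed strata (which is why one gets $\cind$, as you in fact write), not all sections.
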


\begin{rmk}\label{rmk-geom-lemma-notation}
We will write $\mathcal F_N^y(\rho)$ to denote the smooth representation $\iota_{M\cap {}^yQ}^M \left( ({}^y\rho)_{N\cap{}^y L} \right)$ of $M$.
\end{rmk}

Let $\Phi$ be the root system of $G$ (relative to a choice of maximal $F$-split torus $A_0$).  Fix a base $\Delta_0$ for $\Phi$ and let $W_0$ be the Weyl group of $G$ (with respect to $A_0$).  The choice of $\Delta_0$ determines a system $\Phi^+$ of positive roots.  Given a subset $\Theta$ of $\Delta_0$, we may associate a standard parabolic subgroup $P_\Theta = M_\Theta N_\Theta$, with Levi factor $M_\Theta$ and unipotent radical $N_\Theta$, in the usual way.
The following lemma gives a good choice of Weyl group representatives to use when applying \Cref{geom-lem} for standard parabolic subgroups.
\begin{lem}[{\cite[Proposition 1.3.1]{Casselman-book}}]\label{lem-nice-reps}
Let $\Theta$ and $\Omega$ be subsets of $\Delta_0$.
The set 
\begin{align*}
[W_\Theta \backslash W_0 /W_\Omega] = \{ w \in W_0 : w\Omega, w^{-1}\Theta \subset \Phi^+ \}
\end{align*}
provides a choice of Weyl group representatives for the double-coset space $P_\Theta \backslash G / P_\Omega$.
\end{lem}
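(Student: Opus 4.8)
The plan is to reduce the statement, using structure already recalled, to a purely combinatorial assertion about the finite Coxeter group $W_0$ (which is generated by the simple reflections $s_\alpha$, $\alpha\in\Delta_0$), and then to settle that assertion. Apply the canonical bijection $P\backslash G/Q\leftrightarrow M\backslash S(M,L)/L$ recalled above with $P=P_\Theta$ and $Q=P_\Omega$, so that $M=M_\Theta$ and $L=M_\Omega$. Any $y\in N_G(A_0)$ lies in $S(M_\Theta,M_\Omega)$, since $A_0$ lies in every standard Levi subgroup — in particular in both $M_\Theta$ and $M_\Omega$ — and $y$ normalizes $A_0$, so $A_0\subset M_\Theta\cap{}^yM_\Omega$. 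Passing to the quotient $W_0=N_G(A_0)/Z_G(A_0)$ then turns this into a bijection $P_\Theta\backslash G/P_\Omega\leftrightarrow W_\Theta\backslash W_0/W_\Omega$, under which the image of $w\in N_G(A_0)$ corresponds to the double coset $W_\Theta w W_\Omega$. It therefore suffices to show that $D:=[W_\Theta\backslash W_0/W_\Omega]=\{w\in W_0:w\Omega\subset\Phi^+,\ w^{-1}\Theta\subset\Phi^+\}$ is a complete and irredundant set of representatives for $W_\Theta\backslash W_0/W_\Omega$.

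For \emph{existence} of a representative in $D$, fix a double coset $\mathcal{C}$ and choose $w\in\mathcal{C}$ of minimal length. If some $\alpha\in\Theta$ satisfied $w^{-1}\alpha\in\Phi^-$, then $\ell(s_\alpha w)<\ell(w)$ while $s_\alpha w\in\mathcal{C}$, contradicting minimality; so $w^{-1}\Theta\subset\Phi^+$. Likewise, if some $\beta\in\Omega$ satisfied $w\beta\in\Phi^-$, then $\ell(ws_\beta)<\ell(w)$ while $ws_\beta\in\mathcal{C}$, again impossible; so $w\Omega\subset\Phi^+$. (Here I use only the elementary length criteria $\ell(s_\alpha x)<\ell(x)\iff x^{-1}\alpha\in\Phi^-$ and $\ell(xs_\beta)<\ell(x)\iff x\beta\in\Phi^-$ for simple $\alpha$, $\beta$.) Hence $w\in D\cap\mathcal{C}$. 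Moreover, any $w\in D$ is then the minimal-length element of both the coset $W_\Theta w$ and the coset $wW_\Omega$, so $\ell(uw)=\ell(u)+\ell(w)$ for all $u\in W_\Theta$ and $\ell(wv)=\ell(w)+\ell(v)$ for all $v\in W_\Omega$; combining these with $\ell(ab)\geq\ell(b)-\ell(a)$ yields $\ell(uwv)\geq\ell(wv)-\ell(u)=\ell(w)+\ell(v)-\ell(u)$ and, symmetrically, $\ell(uwv)\geq\ell(u)+\ell(w)-\ell(v)$, whence $\ell(uwv)\geq\ell(w)$ for all $u\in W_\Theta$ and $v\in W_\Omega$. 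Thus every element of $D$ has minimal length in its double coset, and $D\cap\mathcal{C}$ is precisely the set of minimal-length elements of $\mathcal{C}$.

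It remains to prove \emph{uniqueness}: that $\mathcal{C}$ has only one element of minimal length, equivalently that $D$ is irredundant. This is the classical structure theorem for parabolic double cosets in a Coxeter group — each $W_\Theta w W_\Omega$ contains a unique shortest element, characterized precisely by the two positivity conditions defining $D$ — and I would either cite it (e.g.\ \cite{Casselman-book}, or Bourbaki's volume on Lie groups and Lie algebras, Ch.~IV, \S1) or reprove it by the standard argument: given $w,w'\in D$ in the same double coset, write $w'=uwv$ with $u\in W_\Theta$ and $v\in W_\Omega$ chosen so that $\ell(u)+\ell(v)$ is minimal, and then use $(w')^{-1}\Theta\subset\Phi^+$, $w'\Omega\subset\Phi^+$ and the deletion condition for $W_0$ to rule out $\ell(u)+\ell(v)>0$, forcing $w=w'$. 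I expect this rigidity inside a double coset to be the only genuinely nontrivial ingredient; the remainder is the reduction above together with the elementary length estimates. Granting it, $w\mapsto W_\Theta w W_\Omega$ is a bijection from $D$ onto $W_\Theta\backslash W_0/W_\Omega$, which combined with the first paragraph proves the Lemma.
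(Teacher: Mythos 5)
The paper offers no argument for this lemma at all --- it is quoted directly from Casselman's notes (Proposition~1.3.1) --- so what you have written is a proof of the outsourced fact rather than an alternative to an internal one. Your proof follows the standard route and is essentially correct: the reduction to the Coxeter-theoretic statement that $D=\{w\in W_0: w\Omega,\,w^{-1}\Theta\subset\Phi^+\}$ is a transversal for $W_\Theta\backslash W_0/W_\Omega$, the existence argument via a minimal-length element and the two length criteria, and the estimate $\ell(uwv)\geq\ell(w)$ for $w\in D$ are all complete and accurate. Two steps are thinner than the rest. First, passing from $M_\Theta\backslash S(M_\Theta,M_\Omega)/M_\Omega$ to $W_\Theta\backslash W_0/W_\Omega$ requires more than the inclusion $N_G(A_0)\subset S(M_\Theta,M_\Omega)$: one must also check that every $M_\Theta$--$M_\Omega$ double coset in $S(M_\Theta,M_\Omega)$ meets $N_G(A_0)$ (conjugate a maximal split torus of $M_\Theta\cap{}^yM_\Omega$ to $A_0$ inside $M_\Theta$) and that the induced map on double cosets is injective; this is the refined Bruhat decomposition and deserves a citation or a sentence. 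Second, the uniqueness of the element of $D$ in each double coset is, as you correctly identify, the only genuinely nontrivial ingredient; your sketch (exchange/deletion on a minimal expression $w'=uwv$) is not yet a proof as stated, since the step does not visibly terminate, but deferring to Bourbaki Ch.~IV, \S 1 or to Casselman's Proposition~1.3.1 itself is exactly the level of rigor the paper adopts.
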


We will always use the choice of ``nice" representatives  $[W_\Theta \backslash W_0 / W_\Omega] \subset S(M_\Theta,M_\Omega)$ for the double-coset space $P_\Theta \backslash G / P_\Omega \simeq M_\Theta \backslash S(M_\Theta, M_\Omega) / M_\Omega$.

\subsection{Distinguished (induced) representations}\label{sec-dist-reps}
Let $\pi$ be a smooth representation of $G$.  We also let $\pi$ denote its restriction to $H$.   Let $\chi$ be a quasi-character of $H$.  
\begin{defn}\label{defn-dist}
The representation $\pi$ is $(H,\chi)$-distinguished if the space $\Hom_H(\pi,\chi)$ is nonzero.  
\end{defn}
If $\pi$ is $(H,1)$-distinguished, where $1$ is the trivial character of $H$, then we will simply call $\pi$ $H$-distinguished. The elements of $\Hom_H(\pi,1)$ are $H$-invariant linear forms on the space of $\pi$.
\subsubsection{Relative matrix coefficients}
Let $(\pi,V)$ be a smooth $H$-distinguished representation of $G$.  
Let $\lambda \in \Hom_H(\pi,1)$ be a nonzero $H$-invariant linear form on $V$ and let $v$ be a nonzero vector in $V$.
In analogy with the usual matrix coefficients, define a complex-valued function $\varphi_{\lambda,v}$ on $G$ by
$\varphi_{\lambda,v}(g) = \ip{\lambda}{\pi(g)v}$.  
We refer to the functions $\varphi_{\lambda,v}$ as $\lambda$-relative matrix coefficients.
When $\lambda$ is understood, we will drop it from the terminology.
The representation $\pi$ is smooth; therefore, the relative matrix coefficients $\varphi_{\lambda,v}$ lie in $C^\infty(G)$, for every $v\in V$.
In addition, since $\lambda$ is $H$-invariant, the functions $\varphi_{\lambda,v}$ descend to well-defined functions on the quotient $H\backslash G$.

Let $\omega$ be a unitary character of $Z_G$ and further suppose that $\pi$ is an $\omega$-representation.
Since the central character $\omega$ is unitary, the function $Z_GH\cdot g \mapsto |\varphi_{\lambda,v}(g)|$ is well defined on $Z_GH\backslash G$.  The centre $Z_G$ of $G$ is unimodular since it is abelian.  
The fixed point subgroup $H$ is also reductive (\textit{cf.}~\cite[Theorem 1.8]{digne--michel1994}) and thus unimodular. 
It follows that there exists a $G$-invariant measure on the quotient $Z_GH \backslash G$ by \cite[Proposition 12.8]{Robert-book}.
\begin{defn}
The representation $(\pi,V)$ is said to be
\begin{enumerate}
\item $(H,\lambda)$-relatively square integrable if and only if all of the $\lambda$-relative matrix coefficients are square integrable modulo $Z_GH$.  
\item $H$-relatively square integrable if and only if $\pi$ is $(H,\lambda)$-relatively square integrable for every $\lambda \in \Hom_H(\pi,1)$.
\end{enumerate}
\end{defn}

When $H$ is understood, we drop it from the terminology and speak of relatively square integrable representations.
If $(\pi,V)$ is $H$-distinguished and $(H,\lambda)$-relatively square integrable, then the morphism that sends $v\in V$ to the relative matrix coefficient $\varphi_{\lambda,v}$ is an intertwining operator from $\pi$ to the right regular representation of $G$ on $L^2(Z_GH \backslash G, \omega)$, where $L^2(Z_GH \backslash G, \omega)$ is the space of  functions on $H \backslash G$, square integrable modulo $Z_G$, that are $Z_G$-eigenfunctions with eigencharacter $\omega$.

\begin{defn}
If $(\pi,V)$ is an irreducible subrepresentation of $L^2(Z_GH\backslash G)$, then we say that $(\pi,V)$ occurs in the discrete spectrum of $H\backslash G$.   
In this case, we say that $(\pi,V)$ is a relative discrete series (RDS) representation.
\end{defn}

The main goal of the present paper is to construct RDS representations for $\mathbf{U}_{E/F}(F) \backslash \GL_{2n}(E)$, when $\mathbf{U}_{E/F}$ is quasi-split unitary group over $F$.
\subsubsection{Invariant forms on induced representations}
\Cref{lem-hom-injects} is well known and follows from an explicit version of Frobenius Reciprocity due to Bernstein and Zelevinsky \cite[Proposition 2.29]{bernstein--zelevinsky1976}.  Let $Q = LU$ be a $\theta$-stable parabolic subgroup with $\theta$-stable Levi factor $L$ and unipotent radical $U$.  Note that the identity component of $Q^\theta = L^\theta U^\theta$ is a parabolic subgroup of the identity component $H^\circ$ of $H$, with the expected Levi decomposition (\textit{cf.}~\cite{helminck--wang1993}, \cite[Lemma 3.1]{gurevich--offen2016}).  Let $\mu$ be a positive quasi-invariant measure on the  quotient $Q^\theta \backslash H$ \cite[Theorem 1.21]{bernstein--zelevinsky1976}.

\begin{lem}\label{lem-hom-injects}
Let $\rho$ be a smooth representation of $L$ and let $\pi = \iota_Q^G \rho$. 
The map $\lambda \mapsto \lambda^G$ is an injection of $\Hom_{L^\theta}( \delta_Q^{1/2}\rho, \delta_{Q^\theta})$ into $\Hom_H(\pi,1)$, where for any function $\phi$ in the space of $\pi$, $\lambda^G$ is given explicitly by
\begin{align*}
\ip{\lambda^G}{\phi} &= \int_{Q^\theta \backslash H} \ip{\lambda}{\phi(h)} \ d\mu(h).
\end{align*}
\end{lem}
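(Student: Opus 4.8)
The plan is to build the functional $\lambda^G$ directly from the displayed integral, to check that it is a well-defined $H$-invariant linear form on the space of $\pi$, and then to establish injectivity of $\lambda \mapsto \lambda^G$ by pairing it against functions supported near the identity coset. Conceptually this is the contribution of the identity $Q$--$H$ double coset to an explicit form of Frobenius reciprocity (\cite[Prop.~2.29]{bernstein--zelevinsky1976}), and the analytic facts about the quasi-invariant measure $\mu$ are exactly \cite[Thm.~1.21]{bernstein--zelevinsky1976}; what follows is how I would assemble the pieces.

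First I would verify that the integral makes sense and defines an $H$-invariant functional. Fix $\lambda \in \Hom_{L^\theta}(\delta_Q^{1/2}\rho,\delta_{Q^\theta})$ and a function $\phi$ in the space of $\pi = \ind_Q^G(\delta_Q^{1/2}\otimes\rho)$, so $\phi(qg) = \delta_Q(q)^{1/2}\rho(q)\phi(g)$. Since $Q = LU$ is $\theta$-stable with $\theta$-stable Levi decomposition, $Q^\theta = L^\theta U^\theta$ with $U^\theta$ the unipotent radical of $Q^\theta$; as $\delta_Q$ is trivial on $U$ and $U$ acts trivially on the inflation of $\rho$, writing $q = \ell u \in Q^\theta$ gives $\phi(qh) = \delta_Q(\ell)^{1/2}\rho(\ell)\phi(h)$, and the defining relation for $\lambda$ then yields $\ip{\lambda}{\phi(qh)} = \delta_{Q^\theta}(q)\ip{\lambda}{\phi(h)}$. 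Thus $h \mapsto \ip{\lambda}{\phi(h)}$ is a smooth function on $H$ transforming on the left under $Q^\theta$ by $\delta_{Q^\theta}$, which (as $H$ is unimodular) is precisely the modulus attached to $\mu$; moreover it has compact support modulo $Q^\theta$, because the identity component of $Q^\theta$ is a parabolic subgroup of $H^\circ$ and hence $Q^\theta\backslash H$ is compact. So the integral converges, and the $H$-invariance of $\mu$ on such transforming functions (again via unimodularity of $H$) gives, after the substitution $h \mapsto hh_0$, that $\ip{\lambda^G}{\pi(h_0)\phi} = \ip{\lambda^G}{\phi}$ for all $h_0 \in H$. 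Hence $\lambda^G \in \Hom_H(\pi,1)$, and $\lambda \mapsto \lambda^G$ is plainly linear.

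For injectivity I would argue by testing on concentrated functions. Given $\lambda \neq 0$, choose $v_0$ in the space of $\rho$ with $\ip{\lambda}{v_0} \neq 0$; using smoothness of $\rho$, build $\phi$ in the space of $\pi$ supported in $Q\Omega$ for a sufficiently small compact open $\Omega \ni e$ in $G$, with $\phi$ constant equal to $v_0$ near $e$ --- one checks directly that this prescription is consistent. Shrinking $\Omega$ and using $Q \cap H = Q^\theta$, the restriction $\phi|_H$ is supported in a small neighbourhood of $Q^\theta$ in $Q^\theta\backslash H$, on which $\ip{\lambda}{\phi(h)} = \delta_{Q^\theta}(q_h)\ip{\lambda}{v_0}$ with $q_h \in Q^\theta$. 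Hence $\ip{\lambda^G}{\phi}$ equals $\ip{\lambda}{v_0}$ times a positive integral of $\delta_{Q^\theta}$ against $\mu$, which is nonzero; so $\lambda^G \neq 0$, proving injectivity.

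The step I expect to be the main obstacle is the modular-character bookkeeping: seeing precisely why the left-transformation law of $h \mapsto \ip{\lambda}{\phi(h)}$ forces the hypothesis $\lambda \in \Hom_{L^\theta}(\delta_Q^{1/2}\rho,\delta_{Q^\theta})$ and matches the modulus governing the quasi-invariant measure on $Q^\theta\backslash H$, together with invoking unimodularity of $H$ at the right places. Once these normalizations are pinned down --- exactly as in \cite[Thm.~1.21, Prop.~2.29]{bernstein--zelevinsky1976} --- constructing the test functions and checking convergence are routine.
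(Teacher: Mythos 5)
The paper offers no written proof of this lemma (it simply cites Bernstein--Zelevinsky), and your write-up is the standard unwinding of that citation: the integral over the closed orbit $QH$ together with the modulus bookkeeping that forces the hypothesis $\lambda\in\Hom_{L^\theta}(\delta_Q^{1/2}\rho,\delta_{Q^\theta})$. The well-definedness and $H$-invariance parts are correct: the transformation law $\ip{\lambda}{\phi(qh)}=\delta_{Q^\theta}(q)\ip{\lambda}{\phi(h)}$, the compactness of $Q^\theta\backslash H$, and the matching of $\delta_{Q^\theta}$ with the modulus of the quasi-invariant measure (using unimodularity of $H$) are exactly the right points.

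The one genuine gap is in the injectivity step. You assert that, after shrinking $\Omega$ and ``using $Q\cap H=Q^\theta$'', every $h\in H\cap Q\Omega$ can be written $h=q_h k$ with $q_h\in Q^\theta$, so that $\ip{\lambda}{\phi(h)}=\delta_{Q^\theta}(q_h)\ip{\lambda}{v_0}$ is a positive multiple of $\ip{\lambda}{v_0}$. The equality $Q\cap H=Q^\theta$ gives only the set-theoretic injectivity of $Q^\theta\backslash H\rightarrow Q\backslash G$; it does not by itself yield the containment $H\cap Q\Omega\subset Q^\theta\,(\Omega'\cap H)$ for small $\Omega$. Without that containment you only know $h=qk$ with $q\in Q$ arbitrary, in which case $\ip{\lambda}{\phi(h)}=\delta_Q(q)^{1/2}\ip{\lambda}{\rho(q)v_0}$ need not be a positive multiple of $\ip{\lambda}{v_0}$ (as $\lambda$ is only $L^\theta$-equivariant), and the integral could a priori vanish by cancellation. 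What you need is that the double coset $QH$ is closed in $G$, equivalently that $Q^\theta\backslash H\rightarrow Q\backslash G$ is a homeomorphism onto a closed subset; this holds precisely because $Q$ is $\theta$-stable (it is the closed-orbit statement the paper alludes to immediately after \Cref{cor-hom-injects}), and since $Q^\theta\backslash H$ is compact it forces the subspace and quotient topologies to agree, giving the containment you use. With that ingredient made explicit, the injectivity argument is complete and the proof is correct.
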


\begin{cor}\label{cor-hom-injects}
If $\delta_Q^{1/2}$ restricted to $L^\theta$ is equal to $\delta_{Q^\theta}$, then the map $\lambda \mapsto \lambda^G$ is an injection of $\Hom_{L^\theta}(\rho, 1)$ into $\Hom_H(\pi,1)$. In particular, if $\rho$ is $L^\theta$-distinguished, then $\pi$ is $H$-distinguished.
\end{cor}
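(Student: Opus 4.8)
\emph{Proof proposal.}
The plan is to deduce the corollary directly from \Cref{lem-hom-injects} by identifying the two $\Hom$-spaces via an elementary character twist. First I would record the general (purely formal) fact that for any quasi-character $\chi$ of a group and any smooth representation $\rho$, the spaces $\Hom(\chi\otimes\rho,\chi)$ and $\Hom(\rho,1)$ are literally the same subspace of the linear dual of the space of $\rho$: a linear functional $\lambda$ intertwines $\chi\otimes\rho$ with $\chi$ if and only if it intertwines $\rho$ with $1$, since the nonzero scalar $\chi(g)$ appears on both sides of the equivariance relation and cancels. Applying this with $\chi=\delta_{Q^\theta}$ and using the hypothesis $\delta_Q^{1/2}|_{L^\theta}=\delta_{Q^\theta}$, we have $\delta_Q^{1/2}\rho|_{L^\theta}=\delta_{Q^\theta}\otimes(\rho|_{L^\theta})$, and hence a canonical identification
\[
\Hom_{L^\theta}(\rho,1)=\Hom_{L^\theta}\big(\delta_Q^{1/2}\rho,\ \delta_{Q^\theta}\big).
\]

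Next I would feed this through \Cref{lem-hom-injects}: that lemma supplies an injection $\lambda\mapsto\lambda^G$ of $\Hom_{L^\theta}(\delta_Q^{1/2}\rho,\delta_{Q^\theta})$ into $\Hom_H(\pi,1)$, given explicitly by $\ip{\lambda^G}{\phi}=\int_{Q^\theta\backslash H}\ip{\lambda}{\phi(h)}\,d\mu(h)$. Precomposing with the identification above yields an injection of $\Hom_{L^\theta}(\rho,1)$ into $\Hom_H(\pi,1)$ with the same integral formula, since the composite of an injection with a linear isomorphism is again injective. For the final assertion, if $\rho$ is $L^\theta$-distinguished then $\Hom_{L^\theta}(\rho,1)\neq 0$, so its injective image in $\Hom_H(\pi,1)$ is nonzero, i.e.\ $\pi$ is $H$-distinguished.

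The only point that requires a moment's care — and the closest thing to an obstacle — is verifying that the character-twist identification is $L^\theta$-equivariant, which is exactly the content of the hypothesis $\delta_Q^{1/2}|_{L^\theta}=\delta_{Q^\theta}$; once that is granted, the argument is entirely formal. (It is worth noting in passing that this hypothesis is genuinely needed, which is why \Cref{lem-hom-injects} is stated with target $\delta_{Q^\theta}$ rather than the trivial character of $L^\theta$.) In the applications in \Cref{sec-rds} one then checks this modular-character condition directly for the relevant $\theta$-stable parabolic $Q$ and Levi $L$.
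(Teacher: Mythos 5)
Your proposal is correct and matches the paper's argument: the paper's one-line proof is precisely the observation that $\Hom_{L^\theta}(\delta_Q^{1/2}\rho,\delta_{Q^\theta})=\Hom_{L^\theta}(\rho,\delta_Q^{-1/2}\vert_{L^\theta}\delta_{Q^\theta})$, which under the hypothesis equals $\Hom_{L^\theta}(\rho,1)$, followed by an appeal to \Cref{lem-hom-injects}. You have simply spelled out the same character-cancellation identification in more detail.
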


\begin{proof}
Observe that $\Hom_{L^\theta}(\delta_Q^{1/2}\rho, \delta_{Q^\theta}) = \Hom_{L^\theta}(\rho, \delta_Q^{-1/2}\vert_{L^\theta}\delta_{Q^\theta})$.
\end{proof}

In fact, the $H$-invariant linear form on $\pi = \iota_Q^G\rho$ arises from the closed orbit in $Q\backslash G / H$ via the Mackey theory.
\section{Background on RDS: the Relative Casselman's Criterion}\label{sec-back-rds}
\subsection{Exponents (of induced representations)}
Let $(\pi, V)$ be a finitely generated admissible representation of $G$.  
Let $\chi$ be a quasi-character of the $F$-split component $A_G$ of the centre of $G$. 
For $n\in \N$, $n\geq 1$, define the subspace
\begin{align*}
V_{\chi, n} = \{ v \in V : (\pi(z) - \chi(z))^n v = 0, \ \text{for all} \ z\in A_G \},
\end{align*}
and set 
\begin{align*}
V_{\chi} = \bigcup_{n=1}^\infty V_{\chi, n}.
\end{align*}
Each $V_{\chi, n}$ is a $G$-stable subspace of $V$ and $V_{\chi}$ is the generalized $\chi$-eigenspace in $V$ for the $A_G$-action on $V$.  
By \cite[Proposition 2.1.9]{Casselman-book},
\begin{enumerate}
\item $V$ is a direct sum $\displaystyle V = \bigoplus_{\chi} V_{\chi}$, where $\chi$ ranges over quasi-characters of $A_G$, and \label{item-direct}
\item since $V$ is finitely generated, there are only finitely many $\chi$ such that $V_{\chi} \neq 0$. Moreover, there exists $n\in \N$ such that $V_{\chi} = V_{\chi,n}$, for each $\chi$. \label{item-finite}
\end{enumerate}
Let $\Exp_{A_G}(\pi)$ be the (finite) set of quasi-characters of $A_G$ such that $V_{\chi} \neq 0$.  The quasi-characters that appear in $\Exp_{A_G}(\pi)$ are called the exponents of $\pi$.
The second item above implies that $V$ has a finite filtration such that the quotients are $\chi$-representations, for $\chi \in \Exp_{A_G}(\pi)$.

\begin{lem}\label{exp-irred-subq}
The characters $\chi$ of $A_G$ that appear in $\Exp_{A_G}(\pi)$ are the central quasi-characters of the irreducible subquotients of $\pi$.
\end{lem}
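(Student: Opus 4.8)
The plan is to exploit the finite filtration of $V$ whose successive quotients are $\chi$-representations for $\chi \in \Exp_{A_G}(\pi)$, together with the fact that every irreducible subquotient of $\pi$ admits a central quasi-character by Schur's Lemma (admissibility of $\pi$ plus $A_G \subset Z_G$ acting by intertwining operators). First I would note one direction: if $(\sigma, W)$ is an irreducible subquotient of $\pi$ with central quasi-character $\omega_\sigma$, then in particular $A_G$ acts on $W$ by the scalar $\omega_\sigma$; since $\sigma$ appears as a subquotient of one of the graded pieces $V_{\chi}$ (indeed $W$ is a subquotient of $V$, hence of $\bigoplus_\chi V_\chi$, and being irreducible it is a subquotient of a single $V_\chi$), the $A_G$-action on $W$ is by the scalar $\chi$ up to the nilpotent part, forcing $\omega_\sigma = \chi \in \Exp_{A_G}(\pi)$.

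For the reverse inclusion, let $\chi \in \Exp_{A_G}(\pi)$, so $V_\chi \neq 0$. Then I would pick any irreducible subquotient $(\sigma, W)$ of the nonzero finitely generated admissible representation $V_\chi$ — such a subquotient exists because a nonzero finitely generated admissible smooth representation of $G$ has an irreducible quotient (equivalently, Jacobi–Zorn gives a maximal proper subrepresentation). Since $A_G$ acts on all of $V_\chi$ with generalized eigenvalue $\chi$ (by definition of $V_\chi$), it acts on the subquotient $W$ the same way; but on the irreducible $W$ the operator $\pi(z)$ is scalar for each $z \in A_G$ (Schur), and the only scalar whose $n$-th power annihilates $(\pi(z)-\chi(z))^n$ is $\chi(z)$ itself. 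Hence $\sigma$ has central quasi-character $\chi$, realizing $\chi$ as claimed. Finally I would observe that $\sigma$ is genuinely a subquotient of $\pi$ because $V_\chi$ is a $G$-stable subspace (in fact direct summand) of $V$.

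The only mildly delicate point is making precise that an irreducible subquotient of $\pi$ lands inside a single graded piece: an irreducible subquotient $W = V'/V''$ with $V'' \subset V' \subset V$ need not a priori be contained in one $V_\chi$, but since $V = \bigoplus_\chi V_\chi$ as $G$-modules and the projections $V \to V_\chi$ are $G$-equivariant, the image of $V'$ in each $V_\chi$ gives a decomposition $W \cong \bigoplus_\chi (\text{image of } V' \text{ in } V_\chi)/(\text{image of } V'')$, and irreducibility forces exactly one summand to be nonzero. I expect this bookkeeping with the direct-sum decomposition to be the main (though minor) obstacle; everything else is a direct application of Schur's Lemma and the definitions, so I would keep the write-up short.
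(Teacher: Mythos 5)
Your argument is correct, and it is precisely the standard argument that the paper leaves implicit: the lemma is stated there without proof as a consequence of the decomposition $V=\bigoplus_\chi V_\chi$ from \cite[Proposition 2.1.9]{Casselman-book}, Schur's lemma for irreducible admissible representations, and the existence of irreducible quotients of nonzero finitely generated smooth representations. Your care with the bookkeeping point (that an irreducible subquotient lives in a single summand $V_\chi$, because the projections onto the generalized eigenspaces are $G$-equivariant and preserve $G$-stable subspaces) is exactly the right detail to pin down, so there is nothing to add.
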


Let $(\pi,V)$ be a finitely generated admissible  representation of $G$. Let $P=MN$ be a parabolic subgroup of $G$ with Levi factor $M$ and unipotent radical $N$.  
It is a theorem of Jacquet that $(\pi_N, V_N)$ is also finitely generated and admissible (\textit{cf.}~\cite[Theorem 3.3.1]{Casselman-book}).  Applying \eqref{item-direct} and \eqref{item-finite} to $(\pi_N, V_N)$, we obtain a direct sum decomposition
\begin{align*}
V_N = & \bigoplus_{\chi \in \Exp_{A_M}(\pi_N)} (V_N)_{\chi}
\end{align*}
where the set $\Exp_{A_M}(\pi_N)$ of quasi-characters of $A_M$, such that $(V_N)_{\chi} \neq 0$, is finite.
The quasi-characters of $A_M$ appearing in $\Exp_{A_M}(\pi_N)$ are called the exponents of $\pi$ along $P$.

We are ultimately interested in the exponents of parabolically induced representations.
For a proof of the following lemma, see \cite[Lemma 4.15]{smith2018}

\begin{lem}\label{red-to-ind-exp}
Let $P = MN$ be a parabolic subgroup of $G$, let $(\rho, V_\rho)$ be a  finitely generated admissible representation of $M$ and let $\pi = \iota_P^G\rho$.
The quasi-characters $\chi \in \Exp_{A_G}(\pi)$ are the restriction to $A_G$ of characters $\eta$ of $A_M$ appearing in $\Exp_{A_M}(\rho)$.
\end{lem}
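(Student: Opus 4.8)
\emph{Proof plan.}
The plan is to reduce the statement to the explicit action of the centre on an induced module. First I would record two elementary facts. Since $A_G\subseteq Z_G$ lies in every Levi subgroup of $G$, we have $A_G\subseteq A_M$; and since any $z\in A_G$ is central in $G$, the operator $\Ad(z)$ is trivial, whence $\delta_P(z)=\lvert\det\Ad_{\style n}(z)\rvert=1$, i.e. $\delta_P\vert_{A_G}$ is trivial. Consequently, for $z\in A_G$ and $f$ in the space of $\pi=\ind_P^G(\delta_P^{1/2}\otimes\rho)$ we get $(\pi(z)f)(g)=f(gz)=f(zg)=(\delta_P^{1/2}\otimes\rho)(z)f(g)=\rho(z)\bigl(f(g)\bigr)$; that is, $A_G$ acts on $\pi$ by post-composing each function with the corresponding operator of $\rho$.

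Next I would set up the generalized eigenspace decomposition on the Levi side. Because $A_G$ is central in $M$, the operators $\rho(z)$ ($z\in A_G$) commute with $\rho(M)$, so each generalized $A_M$-eigenspace $(V_\rho)_\eta$, $\eta\in\Exp_{A_M}(\rho)$, is an $M$-subrepresentation and is contained in the generalized $A_G$-eigenspace attached to $\eta\vert_{A_G}$. Regrouping, $V_\rho=\bigoplus_{\chi}(V_\rho)^{A_G}_\chi$ with $(V_\rho)^{A_G}_\chi=\bigoplus_{\eta\vert_{A_G}=\chi}(V_\rho)_\eta$, where $\chi$ runs over the finite set $\{\eta\vert_{A_G}:\eta\in\Exp_{A_M}(\rho)\}$ and each $(V_\rho)^{A_G}_\chi$ is $M$-stable. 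Here I use that $\rho$ is finitely generated admissible, so that by \cite[Proposition 2.1.9]{Casselman-book} there are only finitely many exponents and a single integer $n$ witnesses the nilpotence for all of them; this also makes $\pi=\iota_P^G\rho$ finitely generated admissible, so that $\Exp_{A_G}(\pi)$ is defined.

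Finally, combine the two observations. Applying $\iota_P^G$ to the decomposition of $V_\rho$ and using that normalized parabolic induction commutes with finite direct sums, the space of $\pi$ becomes $\bigoplus_\chi\iota_P^G\bigl((V_\rho)^{A_G}_\chi\bigr)$. By the pointwise formula for the $A_G$-action, a function $f$ lies in $\iota_P^G\bigl((V_\rho)^{A_G}_\chi\bigr)$ if and only if $(\pi(z)-\chi(z))^n f=0$ for all $z\in A_G$; hence $\iota_P^G\bigl((V_\rho)^{A_G}_\chi\bigr)$ is precisely the generalized $\chi$-eigenspace $(V_\pi)_\chi$ of $A_G$ in $\pi$ (the uniform $n$ being exactly what is needed to pass between $(V_\pi)_\chi$ and a fixed-order eigenspace). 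Since $\iota_P^G$ sends nonzero representations to nonzero representations, $(V_\pi)_\chi\neq 0$ precisely when $(V_\rho)^{A_G}_\chi\neq 0$, i.e. precisely when $\chi=\eta\vert_{A_G}$ for some $\eta\in\Exp_{A_M}(\rho)$. Thus $\Exp_{A_G}(\pi)=\{\eta\vert_{A_G}:\eta\in\Exp_{A_M}(\rho)\}$, which gives the claim.

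I do not anticipate a genuine obstacle: the only substantive input is the explicit description of the $A_G$-action on $\pi$, and the remainder is bookkeeping (compatibility of $\iota_P^G$ with direct sums and the uniform choice of $n$). As an alternative, one could instead invoke \Cref{exp-irred-subq} and match the central characters of the irreducible subquotients of $\iota_P^G\rho$ with those produced by the irreducible subquotients of $\rho$, but the direct computation above is more transparent.
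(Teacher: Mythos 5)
Your argument is correct and is exactly the standard computation this lemma rests on: since $A_G\subseteq A_M$ is central in $G$ and $\delta_P\vert_{A_G}=1$, the group $A_G$ acts on $\iota_P^G\rho$ by post-composition with $\rho\vert_{A_G}$, so the generalized $A_G$-eigenspaces of $\pi$ are the induced modules of the regrouped generalized eigenspaces of $V_\rho$, and nonvanishing is preserved by induction. The paper itself gives no proof here (it defers to \cite[Lemma 4.15]{smith2018}), and your write-up, including the use of the uniform nilpotence order from \cite[Proposition 2.1.9]{Casselman-book} and the $P$-stability of the regrouped eigenspaces, supplies precisely the content being cited.
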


\subsection{Invariant linear forms on Jacquet modules}\label{sec-r-P-lambda}
Let $(\pi,V)$ be an admissible $H$-distinguished representation of $G$. Let $\lambda$ be a nonzero $H$-invariant linear form on $V$. 
Let $P$ be a $\theta$-split parabolic subgroup of $G$.  Recall that a parabolic subgroup $P$ of $G$ is $\theta$-split if $\theta(P)$ is opposite to $P$ (\textit{cf.}~\Cref{sec-pblc}).  Let $N$ be the  unipotent radical of $P$, and let $M= P \cap \theta(P)$ be a $\theta$-stable Levi factor of $P$.  
Independently, Kato--Takano  and Lagier define an $M^\theta$-invariant linear form $\lambda_N$ on the Jacquet module $(\pi_N, V_N)$.
The construction of $\lambda_N$ relies on  Casselman's Canonical Lifting \cite[Proposition 4.1.4]{Casselman-book}.
We next record \cite[Proposition 5.6]{kato--takano2008}, and 
we refer the interested reader to  \cite{kato--takano2008,lagier2008} for the details of the construction of $\lambda_N$. 
\begin{prop}[Kato--Takano, Lagier]\label{rPlambda-prop}
Let $(\pi,V)$ be an admissible $H$-distinguished representation of $G$.
Let $\lambda \in \Hom_H(\pi,1)$ be nonzero and let $P$ be a $\theta$-split parabolic subgroup of $G$ with unipotent radical $N$ and $\theta$-stable Levi component $M= P \cap \theta(P)$.
\begin{enumerate}
\item The linear functional $\lambda_N: V_N \rightarrow \C$ is $M^\theta$-invariant.
\item The mapping $\Hom_H(\pi,1) \rightarrow \Hom_{M^\theta}(\pi_N, 1)$, sending $\lambda$ to $\lambda_N$, is linear.
\end{enumerate}
\end{prop}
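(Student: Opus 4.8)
The plan is to adopt the definition of $\lambda_N$ given by Kato--Takano and Lagier via Casselman's Canonical Lifting and then to verify the two asserted properties essentially formally; the one genuinely substantive ingredient is the asymptotic behaviour of the relative matrix coefficients $g\mapsto\ip{\lambda}{\pi(g)v}$ along a $(\theta,F)$-split torus, which I would quote from \cite{kato--takano2008,lagier2008}. That asymptotic statement is, as I explain below, the only real obstacle; everything else is bookkeeping with the $H$-invariance of $\lambda$.

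Recall the set-up. Let $S$ be the maximal $(\theta,F)$-split torus of $A_M$; since $P$ is $\theta$-split, the roots of $S$ occurring in $N$ cut out a closed contracting cone $S^-\subset S$, one has $\theta(s)=s^{-1}$ for $s\in S$, and $S$ is central in $M$. For $\bar v\in V_N$ fix a compact open subgroup $K$ of $G$ with an Iwahori factorization with respect to $(P,\theta(P))$ such that $\bar v$ is fixed by $K_M:=K\cap M$; Casselman's Canonical Lifting supplies a canonical linear section $\ell_K\colon V_N^{K_M}\to V^K$ of the Jacquet projection $p_N\colon V\to V_N$, compatible as $K$ shrinks, and one sets $\ip{\lambda_N}{\bar v}=\ip{\lambda}{\ell_K(\bar v)}$ (any $\delta_P$-power entering the normalization is harmless in what follows). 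Compatibility of the $\ell_K$ makes this independent of $K$. The defining property of the canonical lift, together with Casselman's estimates for the relative matrix coefficients, yields the \emph{key identity}: for every $v\in V$ there is a translate $S^-_v\subseteq S^-$ of the contracting cone such that
\begin{align*}
\ip{\lambda}{\pi(s)v}=\ip{\lambda_N}{\pi_N(s)\,p_N(v)}\qquad\text{for all }s\in S^-_v.
\end{align*}
I would take this identity as known; proving it is the technical core of \cite{kato--takano2008,lagier2008}, and it is exactly the point at which the $\theta$-split hypothesis on $P$ is used, since it is what makes $s\mapsto Hs$ a genuine path to infinity in $H\backslash G$ and brings the relative analogue of Casselman's asymptotic expansion into play. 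This is the main obstacle; once it is in hand, the remainder is purely formal.

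Part (2) is then immediate: the lift $\ell_K(\bar v)$ is built from $\pi$ alone and does not involve $\lambda$, so $\lambda\mapsto\ip{\lambda}{\ell_K(\bar v)}$ is linear in $\lambda$ for each fixed $\bar v$, whence $\lambda\mapsto\lambda_N$ is linear; that $\lambda_N$ actually lies in $\Hom_{M^\theta}(\pi_N,1)$ is the content of part (1). For part (1) I record three preliminary observations: $M=P\cap\theta(P)$ is $\theta$-stable, so $M^\theta\subseteq G^\theta=H$; $S$ is central in $M$ and hence commutes with $M^\theta$; and, since $\theta$ interchanges the unipotent radicals of $P$ and $\theta(P)$, the modular character $\delta_P$ is trivial on $M^\theta$, so that $p_N$ is $M^\theta$-equivariant ($p_N\circ\pi(m)=\pi_N(m)\circ p_N$ for $m\in M^\theta$).

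For part (1), fix $m\in M^\theta$ and $\bar v\in V_N$, choose $v\in V$ with $p_N(v)=\bar v$, and put $w=\pi_N(m)\bar v-\bar v$. For $s$ in a translate of $S^-$ contained in $S^-_v\cap S^-_{\pi(m)v}$, applying the key identity to $\pi(m)v$ and to $v$, using $sm=ms$, the $M^\theta$-equivariance of $p_N$, and the $H$-invariance of $\lambda$ (with $m\in H$), we obtain
\begin{align*}
\ip{\lambda_N}{\pi_N(s)\pi_N(m)\bar v}=\ip{\lambda}{\pi(s)\pi(m)v}=\ip{\lambda}{\pi(m)\pi(s)v}=\ip{\lambda}{\pi(s)v}=\ip{\lambda_N}{\pi_N(s)\bar v},
\end{align*}
so $\ip{\lambda_N}{\pi_N(s)w}=0$ for all such $s$. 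Finally, $w$ is fixed by some compact open subgroup $K'\subseteq M$, which $S$ normalizes, so all the vectors $\pi_N(s)w$ lie in the finite-dimensional space $V_N^{K'}$ (admissibility of $\pi_N$, Jacquet's theorem); hence $s\mapsto\ip{\lambda_N}{\pi_N(s)w}$ is a finite exponential-polynomial function on the $p$-adic torus $S$, and such a function cannot vanish on a full-rank shifted subcone without vanishing identically. Therefore $\ip{\lambda_N}{\pi_N(s)w}=0$ for all $s\in S$, and taking $s=1$ gives $\ip{\lambda_N}{\pi_N(m)\bar v}=\ip{\lambda_N}{\bar v}$, i.e.\ $\lambda_N$ is $M^\theta$-invariant.
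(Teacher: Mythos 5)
The paper does not actually prove this proposition—it is recorded verbatim from \cite[Proposition 5.6]{kato--takano2008} (see also \cite{lagier2008})—and your reconstruction follows exactly the argument of those references: define $\lambda_N$ via Casselman's canonical lift, invoke the asymptotic identity $\ip{\lambda}{\pi(s)v}=\ip{\lambda_N}{\pi_N(s)p_N(v)}$ on a deep part of the contracting cone as the one substantive input, and propagate the resulting vanishing of the exponential-polynomial function $s\mapsto\ip{\lambda_N}{\pi_N(s)w}$ from the shifted cone to $s=1$. Your supporting observations ($M^\theta\subseteq H$, centrality of $S_M$ in $M$, triviality of $\delta_P$ on $M^\theta$ and hence $M^\theta$-equivariance of $p_N$, and the linearity of $\lambda\mapsto\lambda_N$ because the lift is independent of $\lambda$) are all correct, so the proposal is a faithful and complete rendering of the cited proof.
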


\subsection{The Relative Casselman's Criterion}
Let $(\pi,V)$ be a finitely generated admissible $H$-distinguished representation of $G$. Fix a nonzero $H$-invariant form $\lambda$ on $V$.
 For any closed subgroup $Z$ of the centre of $G$, Kato and Takano \cite{kato--takano2010} define
 \begin{align}\label{relative-exponent}
 \Exp_Z(\pi,\lambda) & = \{ \chi \in \Exp_Z(\pi) : \lambda \vert_{V_{\chi}} \neq 0 \},
 \end{align}
 and refer to the set $\Exp_Z(\pi,\lambda)$ as exponents of $\pi$ relative to $\lambda$.
 
The following appears as \cite[Theorem 4.7]{kato--takano2010}, see \Cref{sec-tori-involution} for the definition of the set $S_M^- \setminus S_GS_M^1$.
\begin{thm}[The Relative Casselman's Criterion, Kato--Takano]\label{rel-casselman-crit}
Let $\omega$ be a unitary character of $Z_G$. Let $(\pi,V)$ be a finitely generated admissible $H$-distinguished $\omega$-representation of $G$.
Fix a nonzero $H$-invariant linear form $\lambda$ on $V$. The representation $(\pi,V)$ is $(H,\lambda)$-relatively square integrable  
if and only if the condition 
 \begin{align}\label{rel-casselman} 
|\chi(s)| &< 1 & \text{for all} \ \chi \in \Exp_{S_M}(\pi_N, \lambda_N) \ \text{and all} \ s\in S_M^- \setminus S_GS_M^1
 \end{align}
  is satisfied for every proper $\theta$-split parabolic subgroup $P=MN$ of $G$.
\end{thm}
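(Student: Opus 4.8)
The plan is to adapt Casselman's proof of the classical square-integrability criterion to the relative setting, with the relative matrix coefficients $\varphi_{\lambda,v}(g)=\ip{\lambda}{\pi(g)v}$ of \Cref{sec-dist-reps} playing the role of ordinary matrix coefficients and a polar decomposition of the symmetric space $H\backslash G$ playing the role of the Cartan decomposition of $G$; finite generation of $\pi$ keeps all exponent sets finite (via the results recalled around \Cref{red-to-ind-exp}) and unitarity of $\omega$ makes $|\varphi_{\lambda,v}|$ well defined on $Z_GH\backslash G$. The first step is to reduce the convergence of $\int_{Z_GH\backslash G}|\varphi_{\lambda,v}(g)|^2\,dg$ to the behaviour of $\varphi_{\lambda,v}$ along $(\theta,F)$-split tori. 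The structural input is a polar decomposition: there are a compact set $\Omega\subset G$ and finitely many maximal $(\theta,F)$-split tori $S$ of $G$, each with an antidominant cone $S^-$, such that $H\backslash G$ is covered by the sets $H\cdot S^-\cdot\Omega$, and, crucially, in these coordinates the $G$-invariant measure on $Z_GH\backslash G$ is, up to factors bounded above and below on each cone, the product of counting measure on $S^-/S^1$ modulo $S_G$ with a positive character $\delta_S$ of $S$ (the Jacobian of the decomposition). Granting this, $\varphi_{\lambda,v}\in L^2(Z_GH\backslash G,\omega)$ for all $v$ if and only if, for each such $S$, the series $\sum_s \delta_S(s)\,|\varphi_{\lambda,v}(s)|^2$ over $s\in S^-/S^1$ modulo $S_G$ converges for all $v$.

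The second step is to compute the asymptotics of $s\mapsto\varphi_{\lambda,v}(s)$ as $s$ runs deep into the face of $S^-$ cut out by a proper $\theta$-split parabolic $P=MN$ with $\theta$-stable Levi $M=P\cap\theta(P)$. The key tool is Casselman's canonical lifting and the attendant asymptotic expansion of matrix coefficients \cite[Proposition 4.1.4]{Casselman-book}: by the very property characterizing the Kato--Takano--Lagier form $\lambda_N$ of \Cref{rPlambda-prop}, one has $\varphi_{\lambda,v}(s)=\ip{\lambda_N}{\pi_N(s)v_N}$ for $s$ sufficiently regular antidominant for $P$, where $v_N$ is the image of $v$ in $V_N$. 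Decomposing $V_N$ into generalized $S_M$-eigenspaces and using the linearity of $\lambda\mapsto\lambda_N$ together with its compatibility with that decomposition — so that $\lambda_N$ annihilates every generalized eigenspace whose eigencharacter is not in $\Exp_{S_M}(\pi_N,\lambda_N)$, in the notation of \eqref{relative-exponent} — the relevant tail of $\sum_s\delta_S(s)|\varphi_{\lambda,v}(s)|^2$ becomes, up to polynomial factors from the Jordan blocks, a sum of geometric-type series $\sum_s\delta_S(s)\,|\chi(s)|^2$ with $\chi\in\Exp_{S_M}(\pi_N,\lambda_N)$. Identifying $\delta_S$ restricted to $S_M$ with the appropriate modular factor, convergence of each such series over $S_M^-$ modulo $S_GS_M^1$ is equivalent to $|\chi(s)|<1$ for all $s\in S_M^-\setminus S_GS_M^1$; running over all $S$ and all proper $\theta$-split $P$ gives the ``if'' direction.

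For the converse I would argue by contraposition. If for some proper $\theta$-split $P=MN$ there are $\chi\in\Exp_{S_M}(\pi_N,\lambda_N)$ and $s_0\in S_M^-\setminus S_GS_M^1$ with $|\chi(s_0)|\ge 1$, then, since $\chi$ lies in $\Exp_{S_M}(\pi_N,\lambda_N)$ — i.e.\ $\lambda_N$ is nonzero on the $\chi$-eigenspace of $V_N$ — I may choose $v$ with $v_N$ in that eigenspace and $\ip{\lambda_N}{v_N}\neq 0$. The asymptotic formula of the second step then shows that $\varphi_{\lambda,v}$ does not decay along the ray generated by $s_0$, so the corresponding subsum of $\sum_s\delta_S(s)|\varphi_{\lambda,v}(s)|^2$ diverges and $\pi$ is not $(H,\lambda)$-relatively square integrable.

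I expect the decisive difficulty to be the first step: producing the polar decomposition of $H\backslash G$ together with control of the invariant measure (the Jacobian $\delta_S$ and the finitely many tori / $H$-orbits that intervene) precise enough that $L^2$-convergence genuinely localizes to geometric series over the cones $S_M^-$. The asymptotic analysis of the second step is technical but is essentially Casselman's machinery applied to $\varphi_{\lambda,v}$ in place of an ordinary matrix coefficient; the one genuinely new point — that it is $\Exp_{S_M}(\pi_N,\lambda_N)$, rather than all of $\Exp_{S_M}(\pi_N)$, that governs relative square integrability — is exactly what the linearity and eigenspace compatibility of $\lambda\mapsto\lambda_N$ recorded in \Cref{rPlambda-prop} are designed to supply.
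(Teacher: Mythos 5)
This theorem is not proved in the paper: it is imported verbatim from \cite[Theorem 4.7]{kato--takano2010}, so there is no internal argument to compare yours against. Your outline does, however, reproduce the strategy of the original Kato--Takano proof: (i) a polar (Cartan-type) decomposition of $H\backslash G$ into cones over finitely many maximal $(\theta,F)$-split tori, with the invariant measure controlled up to bounded factors by a modular character on each cone; (ii) the identity $\varphi_{\lambda,v}(s)=\ip{\lambda_N}{\pi_N(s)v_N}$ for $s$ deep in $S_M^-$, which is precisely the defining property of the form $\lambda_N$ of \Cref{rPlambda-prop} coming from Casselman's canonical lifting; and (iii) reduction to geometric series indexed by $\Exp_{S_M}(\pi_N,\lambda_N)$, the normalization of the Jacquet module absorbing the Jacobian so that the criterion reads $|\chi(s)|<1$ with no extra modular factor. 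The one ingredient you rightly single out as decisive, the polar decomposition with measure estimates, is not something to be constructed ad hoc: it is a theorem of Benoist--Oh and of Delorme--S\'echerre for $p$-adic symmetric spaces, and Kato--Takano invoke it as external input exactly as you would need to. Granting those two cited inputs, your sketch (including the contrapositive argument for necessity, which uses surjectivity of $V\to V_N$ to hit a vector on which $\lambda_N$ is nonzero in the offending generalized eigenspace) is a faithful account of how the theorem is actually proved.
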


It is an immediate corollary of \Cref{rel-casselman-crit} that: if $(\pi,V)$ is an $H$-distinguished discrete series representation of $G$, then $\pi$ is $H$-relatively square integrable.
For a proof of the following, see \cite[Proposition 4.22]{smith2018}.

\begin{prop}\label{non-dist-gen-eig-sp}
Let $(\pi, V)$ be a finitely generated admissible representation of $G$.  Let $\chi \in \Exp_{Z_G}(\pi)$ and assume that none of the irreducible subquotients of $(\pi,V)$ with central character $\chi$ are $H$-distinguished.  Then for any $\lambda \in \Hom_H(\pi,1)$, the restriction of $\lambda$ to $V_{\chi}$ is equal to zero, i.e., $\lambda \vert_{V_{\chi}} \equiv 0$.
\end{prop}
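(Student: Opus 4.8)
The plan is to propagate non-distinction from the irreducible composition factors of $V_\chi$ up to $V_\chi$ itself, exploiting the left-exactness of the contravariant functor $\Hom_H(-,1)$.

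First I would observe that $V_\chi$ is a $G$-subrepresentation of $V$: in Casselman's decomposition $V=\bigoplus_{\chi'}V_{\chi'}$ each generalized eigenspace for the $Z_G$-action is $G$-stable. Being a direct summand of the finitely generated module $V$, the subspace $V_\chi$ is itself finitely generated, and being a subspace of the admissible representation $V$ it is admissible; since it is well known that a finitely generated admissible representation of $G$ has finite length, $V_\chi$ admits a finite composition series
\[
0=V^0\subsetneq V^1\subsetneq\cdots\subsetneq V^m=V_\chi
\]
with each quotient $V^i/V^{i-1}$ irreducible.

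Next I would identify these quotients. For every $z\in Z_G$ the operator $\pi(z)-\chi(z)$ acts nilpotently on $V_\chi$, hence on every subquotient of $V_\chi$; as $V^i/V^{i-1}$ is irreducible, $Z_G$ acts on it by a scalar (Schur's lemma), and nilpotency of $\pi(z)-\chi(z)$ forces that scalar to equal $\chi(z)$. Thus each $V^i/V^{i-1}$ is an irreducible subquotient of $(\pi,V)$ with central character $\chi$, so by hypothesis $\Hom_H(V^i/V^{i-1},1)=0$ for every $i$.

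It then remains to run an induction on $i$, with trivial base case $i=0$, to conclude that $\Hom_H(V^i,1)=0$ for all $i$. For the inductive step, apply $\Hom_H(-,1)$ to the short exact sequence of $H$-modules $0\to V^{i-1}\to V^i\to V^i/V^{i-1}\to 0$ to obtain the exact sequence
\[
0\longrightarrow\Hom_H(V^i/V^{i-1},1)\longrightarrow\Hom_H(V^i,1)\longrightarrow\Hom_H(V^{i-1},1);
\]
the left term vanishes by the previous paragraph and the right term by the inductive hypothesis, so $\Hom_H(V^i,1)=0$. Taking $i=m$ gives $\Hom_H(V_\chi,1)=0$. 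Since $V_\chi$ is $H$-stable and $\lambda\vert_{V_\chi}$ is an $H$-invariant linear form on it, $\lambda\vert_{V_\chi}$ lies in $\Hom_H(V_\chi,1)=0$, i.e.\ $\lambda\vert_{V_\chi}\equiv 0$, as desired. The argument is essentially formal; the only point requiring care is the identification of the composition factors of $V_\chi$ as precisely the irreducible subquotients of $\pi$ having central character $\chi$ — an evident analogue, for the full centre $Z_G$ in place of $A_G$, of \Cref{exp-irred-subq}.
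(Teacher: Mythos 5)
Your argument is correct, and it is essentially the argument the paper relies on: the paper itself gives no proof here but defers to \cite[Proposition 4.22]{smith2018}, whose proof proceeds in the same way (finite length of the $G$-stable summand $V_\chi$, identification of its composition factors as irreducible subquotients of $\pi$ with central character $\chi$ via Schur's lemma, and propagation of the vanishing of $\Hom_H(-,1)$ up the composition series by left-exactness). No gaps.
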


\section{$p$-adic symmetric spaces and parabolic subgroups}\label{sec-symm-pblc}
In this section, we discuss the tori, root systems, and parabolic subgroups relevant to our study of $H\backslash G$.
We briefly review some general notions before turning our attention to the case of $\mathbf{U}_{E/F}(F) \backslash \GL_{2n}(E)$ in \Cref{sec-unitary-structure}.
\subsection{$(\theta,F)$-split tori and the relative root system}\label{sec-tori-involution}
We say that an element $g \in G$ is $\theta$-split if $\theta(g) = g^{-1}$.
Recall that an $F$-torus $S$ contained in $G$ is $(\theta,F)$-split if $S$ is $F$-split and every element of $S$ is $\theta$-split.
Let $S_0$ be a maximal $(\theta,F)$-split torus of $G$.  Fix a $\theta$-stable maximal $F$-split torus $A_0$ of $G$ that contains $S_0$ \cite[Lemma 4.5(iii)]{helminck--wang1993}.
Let $\Phi_0 = \Phi(G,A_0)$ be the root system of $G$ with respect to $A_0$, and let $W_0$ be the associated Weyl group.

Let  $M$ be any Levi subgroup of $G$.  Let $A_M$ be the $F$-split component of the centre of $M$.
The $(\theta,F)$-split component of $M$ is the largest $(\theta,F)$-split torus $S_M$ contained in $Z_M$. 
The torus $S_M$ is the connected component of the subgroup of $\theta$-split elements in $A_M$. Explicitly,\footnote{Here $(\cdot)^\circ$ indicates the Zariski-connected component of the identity.}
\begin{align*}
S_M = \left( \{ x \in A_M : \theta(x) = x^{-1}\} \right)^\circ.
\end{align*}  

There is an action of $\theta$ on the $F$-rational characters $X^*(A_0)$ of $A_0$.  Indeed, since $A_0$ is $\theta$-stable, for $\chi \in X^*(A_0)$, the character
\begin{align*}
(\theta \chi)(a) = \chi(\theta(a))
\end{align*}
is well defined for all $a \in A_0$.
In addition, $\Phi_0 \subset X^*(A_0)$ is stable under the action of $\theta$.  Let $\Phi_0^\theta$ be the set of $\theta$-fixed roots.
Recall that a choice $\Delta_0$ of base for $\Phi_0$ determines a system $\Phi_0^+$ of positive roots.  \begin{defn}\label{defn-theta-base}
A base $\Delta_0$ of $\Phi_0$ is called a $\theta$-base if for every $\alpha \in \Phi_0^+$, such that $\alpha \neq \theta (\alpha)$, we have that $\theta(\alpha) \in \Phi_0^-$.   
\end{defn}
Let $\Delta_0$ be a $\theta$-base of $\Phi_0$ (existence of a $\theta$-base is proved in \cite{helminck1988}).
Let $p: X^*(A_0) \rightarrow X^*(S_0)$ be the morphism defined by restricting the $F$-rational characters of $A_0$ to the subtorus $S_0$.  The map $p$ is surjective and the kernel of $p$ is the submodule $X^*(A_0)^\theta$ consisting of $\theta$-fixed $F$-rational characters.
The restricted root system of $H \backslash G$ (relative to our choice of $(A_0,S_0,\Delta_0)$) is defined to be
\begin{align*}
\overline \Phi_0 = p(\Phi_0)\setminus \{0\} = p(\Phi_0 \setminus \Phi_0^\theta).
\end{align*}
The set $\overline \Phi_0$ coincides with the set $\Phi(G,S_0)$ of roots in $G$ with respect to $S_0$.  The set $\overline \Phi_0$ is a root system by \cite[Proposition 5.9]{helminck--wang1993}; however, $\overline \Phi_0$ is not necessarily reduced.  
The set
\begin{align*}
\overline \Delta_0 = p(\Delta_0) \setminus \{0\} = p(\Delta_0 \setminus \Delta_0^\theta)
\end{align*}
forms a base for $\overline \Phi_0$. 
The linear independence of $\overline \Delta_0$ follows from the fact that $\Delta_0$ is a $\theta$-base and that $\ker p = X^*(A_0)^\theta$ consists of $\theta$-fixed characters.

Given a subset $\overline \Theta \subset \overline \Delta_0$, define the subset 
\begin{align*}
[\overline\Theta ] = p^{-1}(\overline\Theta ) \cup \Delta_0^\theta
\end{align*}
of $\Delta_0$. 
Subsets of $\Delta_0$ of the form $[\overline\Theta ]$, where $\overline \Theta \subset \overline \Delta_0$, are called $\theta$-split.  
The maximal $\theta$-split subsets of $\Delta_0$ are of the form
$[\overline\Delta_0 \setminus\{\bar\alpha\}]$,  
where $\bar\alpha \in \overline\Delta_0$. 
\subsection{$\theta$-split parabolic subgroups and $\theta$-elliptic Levi factors}\label{sec-pblc}
As above, let $\Delta_0$ be a $\theta$-base of $\Phi_0$.
To any subset $\Theta$ of $\Delta_0$, we may associate a $\Delta_0$-standard parabolic subgroup $P_\Theta$ of $G$ with unipotent radical $N_\Theta$ and standard Levi factor $M_\Theta = C_G(A_\Theta)$, where $A_\Theta$ is the $F$-split torus
\begin{align*}
A_\Theta  = \left( \bigcap_{\alpha \in \Theta } \ker \alpha \right)^\circ.
\end{align*}
Let $\Phi_\Theta $ be the subsystem of $\Phi_0$ generated by the simple roots $\Theta$. 
Let $\Phi_\Theta ^+$ be the system of $\Theta$-positive roots.
The unipotent radical $N_\Theta $ of $P_\Theta $ is generated by the root groups $N_\alpha$, where $\alpha \in \Phi_0^+\setminus \Phi_\Theta ^+$.
The torus $A_\Theta $ is the $F$-split component of the centre of $M_\Theta $ and $\Phi_\Theta $ is the root system of $A_0$ in $M_\Theta$.

\begin{defn}
A parabolic subgroup $P$ of $G$ is $\theta$-split if $\theta(P)$ is opposite to $P$.
\end{defn}
If $P$ is a $\theta$-split parabolic subgroup, then $M = P \cap \theta(P)$ is a $\theta$-stable Levi subgroup of both $P$ and the opposite parabolic $P^{\operatorname{op}} = \theta(P)$.
If $\Theta \subset \Delta_0$ is $\theta$-split, then the $\Delta_0$-standard parabolic subgroup $P_\Theta  = M_\Theta N_\Theta$ is $\theta$-split. 
Any $\Delta_0$-standard $\theta$-split parabolic subgroup arises this way \cite[Lemma 2.5(1)]{kato--takano2008}.
Following \cite[$\S1.5$]{kato--takano2010}, the $(\theta,F)$-split component of $M_\Theta $ is equal to 
\begin{align*}
S_\Theta  = \left( \{ s \in A_\Theta  : \theta(s) = s^{-1} \} \right)^\circ =  \left( \bigcap_{ \bar\alpha \in p(\Theta)} \ker (\bar\alpha: S_0 \rightarrow F^\times )  \right)^\circ.
\end{align*}
For any $0 < \epsilon \leq 1$, define 
\begin{align}\label{eq-split-dominant-part}
S_\Theta ^-(\epsilon) = \{ s \in S_\Theta  : |\alpha(s)|_F \leq \epsilon, \ \text{for all} \ \alpha \in \Delta_0 \setminus \Theta \}.
\end{align}
Let $S_\Theta ^-$ denote $S_\Theta ^-(1)$. The set $S_\Theta ^-$ is referred to as the dominant part of $S_\Theta$.

By \cite[Theorem 2.9]{helminck--helminck1998}, the subset $\Delta_0^\theta$ of $\theta$-fixed roots in $\Delta_0$ determines the ($\Delta_0$-standard) minimal $\theta$-split parabolic subgroup $P_0 = P_{\Delta_0^\theta}$.  By \cite[Proposition 4.7(\rm{iv})]{helminck--wang1993}, the minimal $\theta$-split parabolic subgroup $P_0$ has standard $\theta$-stable Levi $M_0 = C_G(S_0)$.   
Let $N_0$ be the unipotent radical of $P_0$.  We have that $P_0 = M_0N_0$.

\begin{lem}[{\cite[Lemma 2.5]{kato--takano2008}}]\label{KT08-lem-2.5}
Let $S_0 \subset A_0$, $\Delta_0$, and $P_0 = M_0N_0$ be as above.
\begin{enumerate}
\item Any $\theta$-split parabolic subgroup $P$ of $G$ is conjugate to a $\Delta_0$-standard $\theta$-split parabolic subgroup by an element $g \in (\Hbf \mathbf M_0)(F)$.
\item If the group of $F$-points of the product $(\Hbf \mathbf M_0)(F)$ is equal to $HM_0$, then any $\theta$-split parabolic subgroup of $G$ is $H$-conjugate to a $\Delta_0$-standard $\theta$-split parabolic subgroup.
\end{enumerate}
\end{lem}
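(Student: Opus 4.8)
The plan is to deduce part (2) quickly from part (1), and to prove part (1) by transporting an arbitrary $\theta$-split parabolic into the fixed frame $(A_0,S_0,\Delta_0)$ with the help of two structural results of Helminck--Wang \cite{helminck--wang1993}: (i) any two maximal $(\theta,F)$-split tori of $G$ are conjugate by an element of $H=\Hbf(F)$; and (ii) the restricted Weyl group $W(G,S_0)=N_G(S_0)/M_0$ is realised inside $\Hbf$ already at the level of algebraic groups, i.e.\ $N_{\G}(\mathbf S_0)=N_{\Hbf}(\mathbf S_0)\cdot\mathbf M_0$. The group $(\Hbf\mathbf M_0)(F)$ --- rather than the possibly smaller set $HM_0$ --- is forced only at the very end, because passing to $F$-points in (ii) gives merely the containment $N_G(S_0)=N_{\G}(\mathbf S_0)(F)\subseteq(\Hbf\mathbf M_0)(F)$, which need not hold over $HM_0$.

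For part (1), let $P$ be a $\theta$-split parabolic subgroup, with $\theta$-stable Levi $M=P\cap\theta(P)$ and $(\theta,F)$-split component $S_M$. The first step is to recall from \cite{helminck--wang1993} that $M=C_G(S_M)$ and that $P$ is recovered from $S_M$ together with a choice of positive system of $\Phi(G,S_M)$. Enlarging $S_M$ to a maximal $(\theta,F)$-split torus $S$ of $G$, and noting that $S$ centralises $S_M$, one gets $S\subseteq C_G(S_M)=M\subseteq P$. By (i) there is $h_0\in H$ with ${}^{h_0}S=S_0$; since $\theta(h_0)=h_0$, replacing $P$ by ${}^{h_0}P$ does not disturb the $\theta$-structure, so we may assume $S_M\subseteq S\subseteq S_0$. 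Then $M=C_G(S_M)\supseteq C_G(S_0)=M_0$, hence $P\supseteq M_0$.

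Because $S_0$ is $(\theta,F)$-split, $\theta$ acts by $-1$ on $X^*(S_0)$, hence on $\overline\Phi_0=\Phi(G,S_0)$. It follows that \emph{every} parabolic subgroup of $G$ containing $M_0$ is automatically $\theta$-split (for such a parabolic $\theta$ sends it to the opposite parabolic along its Levi), and that the parabolic subgroups containing $M_0$ are in $W(G,S_0)$-equivariant bijection with the parabolic subsets of $\overline\Phi_0$, the $\Delta_0$-standard $\theta$-split parabolics $P_{[\overline\Theta]}$ (for $\overline\Theta\subseteq\overline\Delta_0$) corresponding precisely to the $\overline\Delta_0$-standard parabolic subsets. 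Since $W(G,S_0)$ permutes the Weyl chambers of $\overline\Phi_0$ transitively, with the $\overline\Delta_0$-standard parabolic subsets as a fundamental domain, there is $n\in N_G(S_0)$ for which ${}^nP$ is a $\Delta_0$-standard $\theta$-split parabolic. Finally, $n\in N_G(S_0)=N_{\G}(\mathbf S_0)(F)=(N_{\Hbf}(\mathbf S_0)\mathbf M_0)(F)\subseteq(\Hbf\mathbf M_0)(F)$ by (ii), so $g:=nh_0$ lies in $(\Hbf\mathbf M_0)(F)$ and $gPg^{-1}$ is a $\Delta_0$-standard $\theta$-split parabolic; this proves (1).

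The step I expect to be the main obstacle is this concluding, rationality-flavoured part: one must invoke (i) correctly over the non-closed field $F$, and then verify both that the required chamber-moving element of $W(G,S_0)$ lifts to $N_G(S_0)$ and that $N_G(S_0)$ sits inside $(\Hbf\mathbf M_0)(F)$ --- which is exactly where $(\Hbf\mathbf M_0)(F)$, and not $HM_0$, enters. Everything else is bookkeeping with parabolic subsets of $\overline\Phi_0$. Part (2) is then immediate: assume $(\Hbf\mathbf M_0)(F)=HM_0$. Every $\Delta_0$-standard $\theta$-split parabolic contains $P_0=M_0N_0$, hence contains $M_0$, and the parabolic $P':={}^{h_0}P$ produced above also satisfies $M_0\subseteq M'\subseteq P'$. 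Writing the conjugating element of (1) as $nh_0$ with $h_0\in H$ and $n\in N_G(S_0)\subseteq(\Hbf\mathbf M_0)(F)=HM_0$, say $n=h_2m_2$ with $h_2\in H$ and $m_2\in M_0$, we have $m_2\in M_0\subseteq P'$, so ${}^{m_2}P'=P'$ because parabolic subgroups are self-normalising; hence ${}^nP'={}^{h_2}P'$, and therefore ${}^{h_2h_0}P={}^nP'$ is a $\Delta_0$-standard $\theta$-split parabolic with $h_2h_0\in H$, which is the assertion of (2).
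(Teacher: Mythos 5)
The paper offers no proof of this statement---it is quoted directly from Kato--Takano---so there is nothing internal to compare against; I can only assess your argument on its own terms. Its architecture (conjugate the $(\theta,F)$-split component of $M=P\cap\theta(P)$ into the fixed frame, move to a standard chamber with the little Weyl group, track where the conjugator lives) is indeed the standard Helminck--Wang route. But your input (i) is a genuine gap: it is \emph{false} in general that any two maximal $(\theta,F)$-split tori of $G$ are conjugate by an element of $H=\Hbf(F)$. Over a $p$-adic field there are typically several $H$-conjugacy classes of maximal $(\theta,F)$-split tori; classifying them is a substantial part of \cite{helminck--wang1993} and of Helminck--Helminck's later work, and this failure is precisely why the lemma must allow $g\in(\Hbf\mathbf M_0)(F)$ rather than $g\in HM_0$. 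It already fails for the symmetric pair of this paper: for $G=\GL_2(E)$ and $H=\mathbf U_{E/F,w_\ell}(F)$ quasi-split, a torus ${}^gA_T$ is maximal $(\theta,F)$-split exactly when $c:={}^t\sigma(g)\,w_\ell\, g=\diag(c_1,c_2)$ with $c_i\in F^\times$, and the unordered pair $(c_1,c_2)$ modulo $N_{E/F}(E^\times)$ is an invariant of the $H$-conjugacy class; when $-1\in N_{E/F}(E^\times)$ (e.g.\ $E/F$ unramified) both $\{1,1\}$ and $\{\epsilon,\epsilon\}$ occur ($\epsilon$ a non-norm), giving two classes. So the rationality obstruction you claim enters ``only at the very end,'' in lifting the little Weyl group, in fact already appears at the first step: $h_0$ cannot be taken in $H$.

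The correct substitute---and what Helminck--Wang actually provide---is conjugacy of maximal $(\theta,F)$-split tori under elements $g$ with $g^{-1}\theta(g)\in C_{\G}(\mathbf S)$, i.e.\ under $(\Hbf\, C_{\G}(\mathbf S))(F)$. Conjugation by such a $g$ still sends $\theta$-split parabolics to $\theta$-split parabolics (over $\overline F$ write $g=hm$ with $m$ centralizing $S\subseteq M\subseteq P$, so ${}^gP={}^hP$), but the subsequent bookkeeping showing that the composite conjugator lands in $(\Hbf\mathbf M_0)(F)$ is no longer the one-line containment you give; it is the real content of Kato--Takano's proof. Two smaller issues: your final element $g=nh_0$ lies in $N_G(S_0)\cdot H\subseteq(\Hbf\mathbf M_0)(F)\cdot H$, which is not obviously contained in $(\Hbf\mathbf M_0)(F)$ (the product set $\Hbf\mathbf M_0$ is not symmetric in the two factors); with the paper's convention $P=gP_\Theta g^{-1}$ you want $g=h_0^{-1}n^{-1}\in \Hbf\cdot(\Hbf\mathbf M_0)(F)=(\Hbf\mathbf M_0)(F)$. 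Your deduction of (2) from (1), using that the $M_0$-part of the conjugator normalizes every standard $\theta$-split parabolic, is fine once (1) is repaired.
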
 

Let $P = MN$ be a $\theta$-split parabolic subgroup. Pick $g\in (\Hbf \mathbf M_0)(F)$ such that $P = gP_\Theta  g^{-1}$ for some $\theta$-split subset $\Theta \subset \Delta_0$.  
Since $g\in (\Hbf \mathbf M_0)(F)$ we have that $g^{-1}\theta(g) \in \mathbf M_0(F)$, and
we have $S_M = g S_\Theta g^{-1}$.
For a given $\epsilon >0$, one may extend the definition of $S_\Theta ^-$ in \eqref{eq-split-dominant-part}  to the torus $S_M$. 
Set
$S_M^-(\epsilon) = g S_\Theta ^-(\epsilon) g^{-1}$ 
and define $S_M^- = S_M^-(1)$. Recall that we write $S_M^1$ to denote the $\of$-points $S_M(\of)$.

The next definition is made in analogy with the notion of an elliptic Levi subgroup.  The following terminology is due to Murnaghan \cite{murnaghan2017a}.  

\begin{defn}\label{theta-elliptic-defn}
A $\theta$-stable Levi subgroup $L$ of $G$ is $\theta$-elliptic if and only if $L$ is not contained in any proper $\theta$-split parabolic subgroup of $G$.
\end{defn}

The next lemma follows immediately from \Cref{theta-elliptic-defn}.

\begin{lem}\label{contain-theta-elliptic}
If a $\theta$-stable Levi subgroup $L$ of $G$ contains a $\theta$-elliptic Levi subgroup, then $L$ is $\theta$-elliptic.
\end{lem}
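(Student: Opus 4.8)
The plan is to argue directly by contraposition from \Cref{theta-elliptic-defn}, with essentially no computation involved. Let $L$ be a $\theta$-stable Levi subgroup of $G$ that contains a $\theta$-elliptic Levi subgroup $L'$, so in particular $L' \subseteq L$. Since $L$ is assumed $\theta$-stable by hypothesis, the only thing that needs checking is the second half of the definition of $\theta$-ellipticity, namely that $L$ is not contained in any proper $\theta$-split parabolic subgroup of $G$.

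The single key step is then the following: suppose, for contradiction, that $L$ \emph{is} contained in some proper $\theta$-split parabolic subgroup $P = MN$ of $G$. Then $L' \subseteq L \subseteq P$, so $L'$ is also contained in the proper $\theta$-split parabolic $P$, contradicting the hypothesis that $L'$ is $\theta$-elliptic. Hence no such $P$ exists, and $L$ satisfies both conditions of \Cref{theta-elliptic-defn}, i.e., $L$ is $\theta$-elliptic.

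I do not anticipate any genuine obstacle here: the statement is an immediate formal consequence of the definition, its entire content being the monotonicity observation that if a subgroup avoids every proper $\theta$-split parabolic subgroup, then so does any larger subgroup. The only point worth recording explicitly is that $\theta$-ellipticity requires the Levi in question to be $\theta$-stable, which is precisely why that hypothesis is imposed in the statement of the lemma; it is not something the containment $L' \subseteq L$ supplies for free.
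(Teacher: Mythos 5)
Your proof is correct and is precisely the argument the paper has in mind when it says the lemma ``follows immediately from \Cref{theta-elliptic-defn}'': contraposition on the containment condition, with $\theta$-stability supplied by hypothesis. Nothing to add.
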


The next proposition appears in \cite[Proof of Proposition 8.4]{murnaghan2017a}.

\begin{prop}\label{theta-elliptic-theta-stable}
Let $Q$ be a parabolic subgroup of $G$. If $Q$ admits a $\theta$-elliptic Levi factor $L$, then $Q$ is $\theta$-stable.
\end{prop}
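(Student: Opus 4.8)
The plan is to argue by contradiction: suppose $Q$ is a parabolic subgroup admitting a $\theta$-elliptic Levi factor $L$, but that $Q$ is not $\theta$-stable, and derive a contradiction with the assumption that $L$ is not contained in any proper $\theta$-split parabolic subgroup. The key structural input is the theory of relative positions of two parabolic subgroups sharing a common Levi: the parabolic subgroups $Q$ and $\theta(Q)$ both have $\theta$-stable-conjugate Levi data, and one studies the intersection $Q \cap \theta(Q)$, which need not be a parabolic subgroup but does contain a Levi subgroup. More precisely, since $L$ is $\theta$-stable, $\theta(L)$ is a Levi factor of $\theta(Q)$ and $\theta(L) = L$; thus $L$ is a common Levi factor of both $Q$ and $\theta(Q)$.

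First I would recall the standard fact (e.g.\ from the theory of opposition / the building, or from the parabolic-pair analysis underlying the Geometric Lemma) that if two parabolic subgroups $Q$ and $Q'$ share a common Levi factor $L$, then $Q \cap Q'$ contains $L$, the product $(Q\cap Q')(Q\cap (Q')^{\operatorname{op}})$ is dense, and in particular $L = Q \cap L_{Q'}$-type intersections control the relative position; one can choose a parabolic subgroup $R$ of $G$ with Levi factor $L$ such that $R \subseteq Q$ and $\theta(R) \subseteq \theta(Q)$ is opposite to... — rather, the cleaner route: among all parabolic subgroups of $G$ with Levi factor $L$, consider the one, call it $R$, that is ``as $\theta$-split as possible''. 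Concretely, working inside the root system $\Phi(G,A_L)$ where $A_L$ is the split centre of $L$ (all such parabolics correspond to chambers/cones here), $\theta$ acts on $\Phi(G,A_L)$ since $L$ is $\theta$-stable, and one picks $R$ corresponding to a $\theta$-stable facet obtained by intersecting the chamber of $Q$ with its $\theta$-image. The parabolic $R$ so obtained satisfies $\theta(R)$ opposite to $R$ modulo the roots fixed by $\theta$, i.e.\ $R$ is a $\theta$-split parabolic subgroup with $\theta$-stable Levi containing $L$.

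The main obstacle — and the crux of the argument — is showing that this $R$ is a \emph{proper} parabolic subgroup precisely when $Q$ fails to be $\theta$-stable, equivalently that $R = G$ forces $\theta(Q) = Q$. Here is the dichotomy: either $\theta$ acts on the set of parabolic subgroups with Levi $L$ fixing the chamber of $Q$, in which case $\theta(Q) = Q$ and we are done; or $\theta$ moves the chamber of $Q$, in which case the $\theta$-stable facet cut out is a proper face, producing a \emph{proper} $\theta$-split parabolic $R \supsetneq L$ (in fact $R$ has $\theta$-stable Levi $M = R \cap \theta(R) \supseteq L$, which is where one uses that the $\theta$-fixed part of the root system only contributes to the Levi, not the unipotent radical of $R$). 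But $L \subseteq M \subseteq R$ with $R$ a proper $\theta$-split parabolic contradicts the hypothesis that $L$ is $\theta$-elliptic (Definition \ref{theta-elliptic-defn}). Hence the second alternative is impossible, and $Q = \theta(Q)$.

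To package the root-theoretic step cleanly, I would fix a maximal $F$-split torus $A$ of $L$ (hence of $G$) and work with $\Phi = \Phi(G, A_L)$, noting $\theta A_L = A_L$ so $\theta$ permutes $\Phi$; the parabolic $Q$ corresponds to a system of positive roots $\Phi_Q^+$ (a choice of cone), $\theta(Q)$ to $\theta(\Phi_Q^+)$, and I would take $\Phi_R^+ = \{\alpha \in \Phi : \alpha \in \Phi_Q^+ \text{ and } \theta(\alpha) \in \Phi_Q^+\}$ together with the $\theta$-fixed roots, which defines a parabolic $R \supseteq Q \cap \theta(Q)$ that is visibly $\theta$-split with $\theta$-stable Levi. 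Then $R = G \iff \Phi_R^+ \cup (-\Phi_R^+) = \Phi \iff$ every non-$\theta$-fixed root and its $\theta$-image lie on the same side of $\Phi_Q^+$, which (since $\Phi_Q^+$ is a closed half-space worth of roots) is equivalent to $\theta(\Phi_Q^+) = \Phi_Q^+$, i.e.\ $\theta(Q) = Q$. This completes the reduction; the only genuinely delicate point to write carefully is the claim that $\Phi_R^+$ as defined is the positive system of an honest parabolic subgroup with $\theta$-stable Levi factor — this is exactly the kind of statement for which one cites \cite[Lemma 2.5]{kato--takano2008} or the construction of $\theta$-split parabolics in \cite{helminck--wang1993,helminck--helminck1998} recalled in \Cref{sec-pblc}.
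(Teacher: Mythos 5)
Your overall strategy is viable and it packages the argument differently from the paper. The paper's proof is direct: $\theta$-ellipticity of $L$ forces $S_L=S_G$ (\textit{cf.}~\cite[Lemma 3.8]{smith2018}), so every root $\alpha$ of $A_L$ in $G$ is trivial on the $(-1)$-eigenpart of $X^*(A_L)$ and hence satisfies $\theta(\alpha)=\alpha$; consequently $\theta$ preserves the set of roots of the unipotent radical of $Q$ and $Q$ is $\theta$-stable. Your contrapositive route --- manufacture a proper $\theta$-split parabolic $R\supseteq L$ whenever $\theta(Q)\neq Q$ and contradict \Cref{theta-elliptic-defn} --- is a legitimate alternative that uses the definition of $\theta$-ellipticity directly rather than through the criterion $S_L=S_G$.

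However, the explicit recipe you give for $R$ fails exactly in the case your contradiction argument must handle. The set $\{\alpha\in\Phi_Q^+:\theta(\alpha)\in\Phi_Q^+\}$ together with the $\theta$-fixed roots is the set of $A_L$-roots of $Q\cap\theta(Q)$, and it is \emph{not} a parabolic subset of $\Phi(G,A_L)$ when $Q$ is not $\theta$-stable: if $\alpha$ is a root of $N_Q$ with $\theta(\alpha)$ a root of $N_Q^{\operatorname{op}}$, then $\alpha$ is excluded because $\theta(\alpha)\notin\Phi_Q^+$, while $-\alpha$ is excluded because $-\alpha\notin\Phi_Q^+$ and $-\alpha$ is not $\theta$-fixed; so neither $\pm\alpha$ lies in your set and no parabolic containing $A_L$ has that root set. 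The standard repair is the anti-invariant projection: pick $x_Q\in X_*(A_L)\otimes\R$ defining $Q$ (so $\langle\alpha,x_Q\rangle>0$ precisely for $\alpha$ a root of $N_Q$) and let $R$ be the parabolic determined by $x^-=x_Q-\theta(x_Q)$, which lies in the $(-1)$-eigenspace of $\theta$. Then $\theta(R)$ is determined by $-x^-$, so $R$ is $\theta$-split with $\theta$-stable Levi $C_G(x^-)\supseteq L$, and $\langle\alpha,x^-\rangle=\langle\alpha,x_Q\rangle-\langle\theta(\alpha),x_Q\rangle>0$ for any root $\alpha$ of $N_Q$ with $\theta(\alpha)\notin\Phi(N_Q)$, so $R$ is proper whenever $\theta(Q)\neq Q$. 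With that substitution your argument closes; as written, the construction of $R$ is the gap.
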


\begin{proof}
By definition, $L$ is $\theta$-stable.
One can show that for any root $\alpha$ of $A_L$ in $G$ we have $\theta (\alpha) = \alpha$.  
It follows that the unipotent radical of $Q$ is also $\theta$-stable.
\end{proof}
\subsection{Structure of $\mathbf{U}_{E/F}(F) \backslash \GL_{2n}(E)$}\label{sec-unitary-structure}
Let $\G = R_{E/F} \GL_n$ be the restriction of scalars from $E$ to $F$ of $\GL_n$. 
We will restrict to the case that $n$ is even in \Cref{sec-rds} (onward).
We identify the group $G = \G(F)$ with the set $\GL_n(E)$, of $E$-points of $\GL_n$.  The non-trivial element $\sigma$ of the Galois group $\Gal(E/F)$ gives rise to an $F$-involution of $\G$ given by entry-wise Galois conjugation on $\GL_n(E)$.  We denote the Galois involution of $\G$ by $\sigma$. Explicitly,
\begin{align*}
\sigma(g) 
& = (\sigma({g_{ij}})), & \text{where} \ g = (g_{ij}) \in G.
\end{align*}
Following \cite{feigon--lapid--offen2012}, let ${\X}$ denote the $F$-variety of Hermitian matrices in $\G$, 
\begin{align}
\label{Hermitian} {\X} = \{ x \in \G : \tran  \sigma(x) = x \}.
\end{align}
Here $\tran g$ denotes the transpose of $g\in \G$.
There is a right action of $\G$ on ${\X}$ given by $x \cdot g = \tran \sigma({g}) x g$, where $x\in \X$ and $g\in \G$.
Write $X = \X(F)$ for the $F$ points of $\X$.  There is a finite set $X/ G$ of $G$-orbits in $X$ indexed by $F^\times / N_{E/F}(E^\times)$ \cite{feigon--lapid--offen2012}.  By Local Class Field Theory,  $F^\times / N_{E/F}(E^\times)$ is isomorphic to $\Gal(E/F)$, and thus consists of two elements.

Given $x\in X$, define an $F$-involution $\theta_x$ of $G$ by
\begin{align}\label{x-involution}
\theta_x(g) = x^{-1} \tran  \sigma({g})^{-1} x,
\end{align}
for all $g\in G$.
Let $\Hbf^x = \G^{\theta_x}$ be the subgroup of $\theta_x$-fixed elements.  
The group of $F$-points $H^x = \Hbf^x(F)$ is a unitary group associated to $E/F$ and $x$.  

\begin{rmk}
In the literature, ${\bf U}_{E/F, x}$ is often used to denote the unitary group $\Hbf^x$ associated to $E/F$ and $x$.
We will use the ${\bf U}_{E/F, x}$ notation for unitary groups that appear as subgroups of Levi factors of $G$.
\end{rmk}

\begin{defn}\label{defn-involution-action}
An involution $\theta_1$ of $G$ is $G$-equivalent to another involution $\theta_2$ if there exists $g\in G$ such that $\theta_1 = \Int g^{-1} \circ \theta_2 \circ \Int g$, 
where $\Int g$ denotes the inner $F$-automorphism of $\G$ given by $\Int g(x) = g x g^{-1}$, for all $x\in \G$. 
We write $g\cdot \theta$ to denote the involution $\Int g^{-1} \circ \theta \circ \Int g$. 
\end{defn}

Two involutions $\theta_{x_1}$ and $\theta_{x_2}$ are $G$-equivalent if and only if $x_1$ and $x_2$ lie in the same $G$-orbit in $X/G$.  Indeed, if 
there exists $g \in G$ such that $y = x \cdot g = \tran  \sigma({g}) x g$, then one can check that $\theta_y$ is equal to the involution $g\cdot \theta_x = \Int g^{-1} \circ \theta_x \circ \Int g$.  Note that the $G$-action $\theta \mapsto g\cdot \theta$ on involutions is also a right-action.  Since $X/G$ has order two, there are two $G$-equivalence classes of involutions of the form $\theta_x$. It is well known that when $n$ is odd, $\Hbf^x$ is always quasi-split over $F$.  When $n$ is even there are two isomorphism classes of unitary group associated to $E/F$, one of which is quasi-split. 

We fix $\theta = \theta_{w_\ell}$, where $w_{\ell}$ is the permutation matrix in $G$ with unit anti-diagonal, 
and write $\Hbf = \G^\theta$.
The group $\Hbf = \mathbf{U}_{E/F,w_\ell}$ is quasi-split over $F$.    
Write $H = \Hbf(F)$ for the group of $F$-points of $\Hbf$.

Let $J_r$ be the $r\times r$ permutation matrix with unit anti-diagonal
\begin{align*}
J_r = \left( \begin{matrix} & & 1 \\ & \iddots & \\ 1 & & \end{matrix} \right).
\end{align*}
 and note that $w_\ell = J_n$. 
For any positive integer $r$, there exists $\gamma_r \in \GL_r(E)$ such that $\tran  \sigma(\gamma_r) J_r \gamma_r$ lies in the diagonal $F$-split torus of $\GL_r(E)$.
For instance if $r$ is even, we set
\begin{align*}
\gamma_r = \left( \begin{matrix} 
1 & & & &  &1 \\
   & \ddots & & & \iddots & \\
& & 1 & 1 & & \\
& & 1 & -1 & & \\
& \iddots & & & \ddots & \\
1 & & & & & -1
\end{matrix} \right)
\end{align*}
and if $r$ is odd, we take
\begin{align*}
\gamma_r = \left( \begin{matrix} 
1 & & && &  &1 \\
   & \ddots & && & \iddots & \\
& & 1 &0& 1 & & \\
& & 0& 1 & 0& & \\
& & 1 &0& -1 & & \\
& \iddots & && & \ddots & \\
1 & & & & & & -1
\end{matrix} \right).
\end{align*}
Define $\gamma = \gamma_n$ and notice that
\begin{align}
\tran \sigma(\gamma) w_\ell \gamma = \diag(\underbrace{2,\ldots,2}_{\floor{\frac{n}{2}}},\widehat 1, \underbrace{-2, \ldots, -2}_{\floor{\frac{n}{2}}}), 
\end{align}
lies in the diagonal $F$-split torus $A_T$ of $G$.  

Let $\mathbf T$ be the maximal (non-split) diagonal $F$-torus of $\G$.  The torus $\mathbf T$ is obtained by restriction of scalars of the diagonal torus of $\GL_n$. 
Let $T = \mathbf T(F)$, and identify $T$ with the diagonal matrices in $\GL_n(E)$. 
Let $A_T$ be the $F$-split component of $T$. 
Define $T_0 = {}^\gamma T$, then the $F$-split component  of $T_0$ is $A_0={}^\gamma A_T$.  
The tori $T$, $A_T$, $T_0$ and $A_0$ are all $\theta$-stable.
Observe that $A_0$ is a maximal $F$-split torus of $G$ that is $\theta$-split. 
In particular, $A_0$ is a maximal $(\theta, F)$-split torus of $G$.
Indeed, we have that ${}^t\sigma(\gamma) w_\ell \gamma $ lies in the abelian subgroup $A_T$; therefore, for any $\gamma t \gamma^{-1} \in A_0$, we have
\begin{align*}
\theta(\gamma t \gamma^{-1}) 
& = w_\ell^{-1} {}^t\sigma({\gamma})^{-1} ({}^t\sigma({t})^{-1}) {}^t\sigma({\gamma}) w_\ell \\
& = \gamma ({}^t\sigma(\gamma) w_\ell \gamma)^{-1} t^{-1} ({}^t\sigma(\gamma) w_\ell \gamma) \gamma^{-1} \\
& = (\gamma t \gamma^{-1})^{-1},
\end{align*}
where we've used that ${}^t\sigma({t})^{-1} = t^{-1}$, for any $t\in A_T$.

\begin{lem}\label{unitary-theta-split-comp}
For any $x\in X$, the $(\theta_x,F)$-split component of $G$, which we denote by $S_{G,x}$, is equal to the $F$-split component $A_G$ of the centre of $G$.  
\end{lem}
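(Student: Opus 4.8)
The plan is to show the two inclusions $A_G \subseteq S_{G,x}$ and $S_{G,x} \subseteq A_G$ separately. The inclusion $A_G \subseteq S_{G,x}$ is the content of the ``only if'' direction: since $G = \GL_n(E)$, the $F$-split component $A_G$ of the centre $Z_G$ consists of the scalar matrices $\lambda I_n$ with $\lambda \in F^\times$. For such a scalar matrix we compute $\theta_x(\lambda I_n) = x^{-1}\,{}^t\sigma(\lambda I_n)^{-1}\,x = \lambda^{-1} x^{-1} x = \lambda^{-1} I_n = (\lambda I_n)^{-1}$, using that $\sigma$ fixes $\lambda \in F$ and that scalars are central, so they commute past $x$ and $x^{-1}$. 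Hence every element of $A_G$ is $\theta_x$-split, and $A_G$ is an $F$-split torus contained in $Z_G \subseteq Z_{C_G(A_G)} = Z_G$; since $S_{G,x}$ is by definition the \emph{largest} $(\theta_x,F)$-split torus contained in $Z_G$, we get $A_G \subseteq S_{G,x}$.

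For the reverse inclusion, I would argue that $S_{G,x}$, being a torus contained in the centre $Z_G$ of $G = \GL_n(E)$, is automatically contained in the scalar matrices, hence in the (one-dimensional) group of $E^\times$-scalars. But $S_{G,x}$ is moreover required to be $F$-split as an $F$-torus, and the $F$-split component of the centre of $\G = R_{E/F}\GL_n$ is precisely $A_G$ — the scalars in $F^\times$ — because $R_{E/F}\GL_1$ has $F$-split component of rank one, realized by $F^\times \hookrightarrow E^\times$. Since $S_{G,x} \subseteq Z_G$ is an $F$-split torus, it lies inside this maximal $F$-split central torus $A_G$. Therefore $S_{G,x} \subseteq A_G$, and combining with the first paragraph gives $S_{G,x} = A_G$.

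The only subtlety — the step I expect to require the most care — is the claim that the $F$-split component of $Z_G$ is genuinely one-dimensional and equals the $F^\times$-scalars, rather than something larger coming from the restriction of scalars; this is where one uses that $E/F$ is a \emph{field} extension (so $R_{E/F}\GL_1$ is a torus of rank $2$ but $F$-split rank $1$), as opposed to the split étale algebra case. Once that structural fact about $\G = R_{E/F}\GL_n$ is in hand, both inclusions are immediate from the definitions, and the argument is uniform in $x \in X$ since it never uses anything about $x$ beyond the fact that $\theta_x$ has the form \eqref{x-involution}. Note in particular that the conclusion does not depend on which $G$-orbit $x$ lies in, nor on whether $\Hbf^x$ is quasi-split, which is why the lemma is stated for all $x\in X$ at once.
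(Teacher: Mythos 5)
Your proof is correct and takes essentially the same route as the paper: the key step in both is the computation $\theta_x(z)=x^{-1}\,{}^t\sigma(z)^{-1}\,x=z^{-1}$ for $z\in A_G$. The paper obtains the reverse inclusion for free from the explicit description $S_{G,x}=\left(\{z\in A_G:\theta_x(z)=z^{-1}\}\right)^\circ$ recalled in \Cref{sec-tori-involution}, whereas you derive it from the maximality characterization together with the (correct) observation that any $F$-split subtorus of $Z_G\cong R_{E/F}\GL_1$ lies in $A_G$; the two are equivalent.
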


\begin{proof}
Let $z \in A_G$.  Since $z$ is a diagonal matrix with entries in $F^\times$, we have ${}^t \sigma(z) = z$; moreover, since $z$ is central in $G$,
\begin{align*}
\theta_x(z) &= x^{-1} {}^t \sigma(z)^{-1} x 
		       = x^{-1} z ^{-1} x
		       = z^{-1},
\end{align*}
It follows that  $S_{G, x} = (A_G)^\circ = A_G$ (\textit{cf.}~\Cref{sec-tori-involution}).
\end{proof}

Let $\Phi = \Phi(G, A_T)$ be the root system of $G$ with respect to $A_T$, with standard base $\Delta = \{ \epsilon_i-\epsilon_{i+1} : 1 \leq i \leq n-1\}$. 
Let $\Phi_0 = \Phi (G, A_0)$ be the root system of $G$ with respect to $A_0={}^\gamma A_T$. 
Observe that $\Phi_0 = {}^\gamma \Phi$.
Set $\Delta_0 = {}^\gamma \Delta$.  
The set of positive roots of $\Phi_0$ with respect to $\Delta_0$ is denoted $\Phi_0^+$.
We have that $\Phi_0^+ = {}^\gamma \Phi^+$, where $\Phi^+$ is the set of positive roots in $\Phi$ determined by $\Delta$.  Our current aim is to use $\Phi_0$ to determine the (standard) $\theta$-split parabolic subgroups of $G$.  First, we note the following.

\begin{lem}\label{lem-root-to-neg}
For any $\alpha \in \Phi_0$, we have $\theta(\alpha) = -\alpha$.  
\end{lem}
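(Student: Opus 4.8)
The plan is to exploit the fact that $A_0 = {}^\gamma A_T$ is not merely $\theta$-stable but is a maximal $(\theta,F)$-split torus, so that every element of $A_0$ is $\theta$-split; this is exactly what was verified by the displayed computation $\theta(\gamma t \gamma^{-1}) = (\gamma t \gamma^{-1})^{-1}$ just before the statement. Consequently $\theta(a) = a^{-1}$ for all $a \in A_0$, and the claim becomes a one-line translation of this identity into the character lattice $X^*(A_0)$.

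Concretely, I would argue as follows. Let $\alpha \in \Phi_0 = \Phi(G,A_0) \subset X^*(A_0)$. Recall from \Cref{sec-tori-involution} that the action of $\theta$ on $X^*(A_0)$ is given by $(\theta\alpha)(a) = \alpha(\theta(a))$ for $a \in A_0$. Since $a \in A_0$ is $\theta$-split, $\theta(a) = a^{-1}$, and hence
\begin{align*}
(\theta\alpha)(a) = \alpha(\theta(a)) = \alpha(a^{-1}) = \alpha(a)^{-1}
\end{align*}
for every $a \in A_0$. Writing the character group additively, the character $a \mapsto \alpha(a)^{-1}$ is precisely $-\alpha$, so $\theta(\alpha) = -\alpha$ as elements of $X^*(A_0)$. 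In particular $-\alpha \in \Phi_0$ (which we already knew, $\Phi_0$ being a root system), so $\theta$ does preserve $\Phi_0$, acting on it by $\alpha \mapsto -\alpha$.

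There is essentially no obstacle here: the only thing to be careful about is that the convention for the action of $\theta$ on $X^*(A_0)$ (as fixed in \Cref{sec-tori-involution}) is the one compatible with $\theta$ permuting the root subgroups $N_\alpha \subset G$, i.e.\ that $\theta(N_\alpha) = N_{\theta(\alpha)}$; this is automatic from the definition since $\theta$ is an $F$-rational automorphism normalizing $A_0$. Thus the statement reduces entirely to the $\theta$-split condition on $A_0$, and no further input is required. (Equivalently, one could phrase this as: $A_0$ maximal $(\theta,F)$-split forces $p(\Phi_0) = \overline\Phi_0$ with $\Phi_0^\theta = \varnothing$, but the direct character computation above is cleaner.)
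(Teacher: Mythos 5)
Your argument is correct and is precisely the paper's own proof: both rest on the fact that every $a \in A_0$ is $\theta$-split, so $\theta(a)=a^{-1}$, and then compute $(\theta\alpha)(a)=\alpha(a^{-1})=(-\alpha)(a)$. No differences worth noting.
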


\begin{proof}
Let $\alpha \in \Phi_0$.  For any $a\in A_0$, we have that $\theta(a) = a^{-1}$; therefore, 
\begin{align*}
(\theta\alpha)(a) & = \alpha(\theta(a)) = \alpha(a^{-1})  = \alpha(a)^{-1} = (-\alpha)(a).
\end{align*}
Since $a \in A_0$ was arbitrary, we have that $\theta(\alpha) = -\alpha$.
\end{proof}

The following two corollaries of \Cref{lem-root-to-neg} follow immediately (\textit{cf.}~\Cref{defn-theta-base}).

\begin{cor}
The set $\Phi_0^\theta$ of $\theta$-fixed roots in $\Phi_0$ is empty.
\end{cor}

\begin{cor}\label{any-base-theta-base}
Any set of simple roots in $\Phi_0$ is a $\theta$-base for $\Phi_0$. In particular, $\Delta_0$ is a $\theta$-base.
\end{cor}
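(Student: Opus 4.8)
The plan is to read this off directly from \Cref{lem-root-to-neg}. Let $\Delta$ be an arbitrary base of $\Phi_0$ and let $\Phi_0^+$ denote the positive system it determines. According to \Cref{defn-theta-base}, to see that $\Delta$ is a $\theta$-base I must check that whenever $\alpha \in \Phi_0^+$ satisfies $\alpha \neq \theta(\alpha)$, one has $\theta(\alpha) \in \Phi_0^-$.

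First I would note that for \emph{every} root $\alpha \in \Phi_0$ the hypothesis $\alpha \neq \theta(\alpha)$ is automatic: by \Cref{lem-root-to-neg} we have $\theta(\alpha) = -\alpha$, and since $\alpha$ is a nonzero element of the torsion-free abelian group $X^*(A_0)$ we get $-\alpha \neq \alpha$, that is, $\theta(\alpha) \neq \alpha$. Consequently the condition in \Cref{defn-theta-base} reduces to requiring $\theta(\alpha) \in \Phi_0^-$ for all $\alpha \in \Phi_0^+$. But this is again immediate from \Cref{lem-root-to-neg}, since $\alpha \in \Phi_0^+$ forces $\theta(\alpha) = -\alpha \in \Phi_0^-$. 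Hence every base of $\Phi_0$ is a $\theta$-base, and specializing to the distinguished choice $\Delta_0 = {}^\gamma \Delta$ yields the final claim.

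I do not anticipate any genuine obstacle: the entire content is carried by \Cref{lem-root-to-neg}, which in turn rests on $A_0$ being $(\theta,F)$-split, established earlier via the computation $\tran\sigma(\gamma)w_\ell\gamma \in A_T$. The only point deserving an explicit word is the inequality $\alpha \neq -\alpha$, which uses that $X^*(A_0)$ has no $2$-torsion (automatic here, $F$ being of characteristic zero). It is worth remarking in passing that the companion corollary asserting $\Phi_0^\theta = \emptyset$ follows from the same observation, since $\theta(\alpha) = -\alpha = \alpha$ would force $\alpha = 0$, contradicting that $\alpha$ is a root.
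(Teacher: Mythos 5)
Your argument is correct and is exactly the one the paper intends: the corollary is stated as an immediate consequence of \Cref{lem-root-to-neg}, and your unwinding of \Cref{defn-theta-base} (the hypothesis $\alpha\neq\theta(\alpha)$ is vacuously satisfied and $\theta(\alpha)=-\alpha\in\Phi_0^-$ for $\alpha\in\Phi_0^+$) is precisely that immediate deduction. The only quibble is your parenthetical about characteristic zero: $X^*(A_0)$ is a free abelian group of finite rank regardless of the characteristic of $F$, so $\alpha\neq-\alpha$ needs no hypothesis beyond $\alpha\neq 0$.
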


Explicitly, $\Delta_0 = \{{}^\gamma(\epsilon_i - \epsilon_{i+1}) : 1 \leq i \leq n-1\}$ and, by \Cref{any-base-theta-base}, the set of simple roots $\Delta_0$ is a $\theta$-base for $\Phi_0$.  Since the maximal $F$-split torus $A_0$ is a maximal $(\theta,F)$-split torus, the restricted root system of $H\backslash G$ is just the root system $\Phi_0$ of $G$. 
  The next proposition now follows immediately.

\begin{prop}\label{std-theta-split}
Every parabolic subgroup of $G$ standard with respect to $\Delta_0$ is a $\theta$-split parabolic subgroup.  
Any such parabolic subgroup is the $\gamma$-conjugate of the usual block-upper triangular parabolic subgroups of $G$.
\end{prop}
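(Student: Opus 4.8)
The plan is to assemble the statement from facts already in hand; no new ingredient is needed. For the first assertion, fix a subset $\Theta \subseteq \Delta_0$ and let $P_\Theta = M_\Theta N_\Theta$ be the associated $\Delta_0$-standard parabolic, with $M_\Theta = C_G(A_\Theta)$ and $A_\Theta = \left(\bigcap_{\alpha\in\Theta}\ker\alpha\right)^\circ \subseteq A_0$. I would first note that $\theta$ restricts to inversion on $A_0$ (as computed just before \Cref{unitary-theta-split-comp}), so $\theta(A_\Theta) = A_\Theta$ as a subtorus, whence $\theta(M_\Theta) = C_G(\theta(A_\Theta)) = M_\Theta$. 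Next, by \Cref{lem-root-to-neg} the automorphism $\theta$ carries each root group $N_\alpha$ of $A_0$ to $N_{-\alpha}$; since $N_\Theta$ is generated by the $N_\alpha$ with $\alpha \in \Phi_0^+\setminus\Phi_\Theta^+$, the group $\theta(N_\Theta)$ is generated by the $N_{-\alpha}$ for the same roots, i.e.\ it is the unipotent radical of the opposite parabolic $P_\Theta^{\operatorname{op}}$. Hence $\theta(P_\Theta) = M_\Theta\,\theta(N_\Theta) = P_\Theta^{\operatorname{op}}$ is opposite to $P_\Theta$, so $P_\Theta$ is $\theta$-split by definition. Alternatively, one invokes the framework of \Cref{sec-pblc}: since $\Delta_0^\theta \subseteq \Phi_0^\theta = \emptyset$ and $p$ is injective --- its kernel $X^*(A_0)^\theta$ being zero by \Cref{lem-root-to-neg} --- one has $[\,p(\Theta)\,] = \Theta$ for every $\Theta \subseteq \Delta_0$, so every subset of $\Delta_0$ is $\theta$-split and the conclusion follows from \cite[Lemma 2.5(1)]{kato--takano2008}.

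For the second assertion, recall $A_0 = {}^\gamma A_T$, $\Delta_0 = {}^\gamma\Delta$, and $\Phi_0^+ = {}^\gamma\Phi^+$. Conjugation by $\gamma$ is an automorphism of $G$ carrying the diagonal $F$-split torus $A_T$ to $A_0$, the base $\Delta$ to $\Delta_0$, and each root group relative to $A_T$ to a root group relative to $A_0$. Tracking kernels of characters shows $A_{{}^\gamma\Theta'} = {}^\gamma A_{\Theta'}$, hence $M_{{}^\gamma\Theta'} = {}^\gamma M_{\Theta'}$, and tracking root groups shows $N_{{}^\gamma\Theta'} = {}^\gamma N_{\Theta'}$; thus $P_{{}^\gamma\Theta'} = {}^\gamma P_{\Theta'}$ for every $\Theta'\subseteq\Delta$. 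Since each $\Delta_0$-standard parabolic is $P_\Theta$ with $\Theta = {}^\gamma\Theta'$ for a unique $\Theta'\subseteq\Delta$, and the $P_{\Theta'}$ are exactly the block-upper-triangular parabolic subgroups of $\GL_n(E)$, every $\Delta_0$-standard parabolic is the $\gamma$-conjugate of one of these.

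I do not expect a genuine obstacle: the proposition is a bookkeeping consequence of \Cref{lem-root-to-neg}, its corollaries, and the relation $\Delta_0 = {}^\gamma\Delta$. The only steps meriting a moment's care are the observation that $\theta$ stabilizes the subtorus $A_\Theta$ (it inverts $A_0$ pointwise, but therefore preserves every subtorus), and the identification of $\theta(N_\Theta)$ with the opposite unipotent radical via the sign change $\theta(\alpha) = -\alpha$ on $\Phi_0$.
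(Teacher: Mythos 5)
Your proposal is correct and follows the same route the paper intends: the paper offers no written proof, asserting the proposition ``follows immediately'' from \Cref{lem-root-to-neg}, \Cref{any-base-theta-base}, and the fact that every subset of $\Delta_0$ is $\theta$-split because $\Delta_0^\theta=\emptyset$ and $A_0=S_0$ --- exactly the ingredients you use, with the second assertion likewise reduced to $\Delta_0={}^\gamma\Delta$ and $A_0={}^\gamma A_T$. Your write-up simply supplies the details (the action of $\theta$ on root groups, $P_{{}^\gamma\Theta'}={}^\gamma P_{\Theta'}$) that the paper leaves implicit.
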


By \Cref{KT08-lem-2.5}(1), we have that any $\theta$-split parabolic subgroup of $G$ is $(\Hbf \mathbf T_0)(F)$-conjugate to a $\Delta_0$-standard $\theta$-split parabolic subgroup.

We now consider $\theta$-stable parabolic subgroups; in particular, we are concerned with determining which proper $\theta$-stable parabolic subgroups admit a $\theta$-elliptic Levi factor.

\begin{defn}
Let $(\underline n) = (n_1,\ldots, n_r)$ be a partition of $n$, we say that $(\underline n)$ is {balanced} if $n_i = n_{r+1-i}$, for $1 \leq i \leq r$.
\end{defn} 

Let $(\underline n) = (n_1,\ldots, n_r)$ be a partition of $n$.
The opposite partition to  $(\underline n)$ is  $(\underline n)^{\operatorname{op}}=(n_r, \ldots, n_1)$.  This terminology reflects that the standard upper-triangular parabolic subgroup that is $\GL_n$-conjugate to the (lower-triangular) opposite parabolic of $P_{(\underline n)}$ is precisely $P_{(\underline n)^{\operatorname{op}}}$.
Observe that $(\underline n)$ is balanced if and only if $(\underline n)^{\operatorname{op}} = (\underline n)$.

\begin{lem}\label{balanced-stable-pblc}
The $\theta$-stable block upper-triangular parabolic subgroups of $G$ correspond to balanced partitions of $n$.
The only such parabolic that has a $\theta$-stable $\theta$-elliptic Levi subgroup is $P_{(n/2,n/2)}$, in the case that $n$ is even.
\end{lem}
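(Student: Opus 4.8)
The plan is to establish the two assertions separately: first identify the $\theta$-stable block-upper-triangular parabolics, then decide $\theta$-ellipticity of their Levi factors through the $(\theta,F)$-split component. For the first assertion I would compute $\theta(P_{(\underline n)})$ directly from $\theta(g) = w_\ell\,{}^t\sigma(g)^{-1}\,w_\ell$, using $w_\ell = J_n = w_\ell^{-1}$. The automorphism $g \mapsto {}^t\sigma(g)^{-1}$ of $G = \GL_n(E)$ preserves the block pattern but interchanges block-upper-triangular with block-lower-triangular, so it carries $P_{(\underline n)}$ to its opposite parabolic $P_{(\underline n)}^{\operatorname{op}}$ (same block sizes, read top to bottom); conjugation by the full anti-diagonal matrix $J_n$ then reverses the order of the coordinates and hence carries that group to $P_{(\underline n)^{\operatorname{op}}}$. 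Thus $\theta(P_{(\underline n)}) = P_{(\underline n)^{\operatorname{op}}}$, and since distinct partitions give distinct standard parabolics, $P_{(\underline n)}$ is $\theta$-stable exactly when $(\underline n)^{\operatorname{op}} = (\underline n)$, i.e. when $(\underline n)$ is balanced. The same two moves show that for balanced $(\underline n)$ the standard Levi $M_{(\underline n)} = P_{(\underline n)} \cap \theta(P_{(\underline n)})$ is $\theta$-stable.

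For the second assertion, \Cref{theta-elliptic-theta-stable} already shows that only the balanced $P_{(\underline n)}$ are candidates, and since all $\theta$-stable Levi factors of a fixed parabolic are conjugate (by an element of the $\theta$-fixed points of its unipotent radical, hence of $H$) and $\theta$-ellipticity is invariant under such conjugation, it suffices to decide when $M_{(\underline n)}$ is $\theta$-elliptic. The key step is a criterion valid for an arbitrary $\theta$-stable Levi $L$ of $G$: $L$ is $\theta$-elliptic if and only if its $(\theta,F)$-split component $S_L$ equals $A_G$. (One has $A_G \subseteq S_L$ always, since $A_G \subseteq Z(L)$ is $F$-split and, by \Cref{unitary-theta-split-comp}, $(\theta,F)$-split.) For the forward implication: if $S_L \supsetneq A_G$, then $S_L$ is a non-central $(\theta,F)$-split torus, and a cocharacter $\nu$ of $S_L$ that is regular for $\Phi(G,S_L)$ defines a proper parabolic $P(\nu)$ with Levi $C_G(S_L) \supseteq L$; as $\theta$ inverts $S_L$ one gets $\theta(P(\nu)) = P(\nu)^{\operatorname{op}}$, so $P(\nu)$ is a proper $\theta$-split parabolic containing $L$ and $L$ is not $\theta$-elliptic. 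Conversely, if $L \subseteq P = MN$ for a proper $\theta$-split parabolic with $\theta$-stable Levi $M = P \cap \theta(P)$, then $L = \theta(L) \subseteq \theta(P)$ forces $L \subseteq M$, hence $A_G \subsetneq S_M \subseteq S_L$, the strict inclusion because $S_M$ acts non-trivially on the non-trivial unipotent radical $N$.

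Granting the criterion, I would compute $S_{M_{(\underline n)}}$ for a balanced $(\underline n) = (n_1,\dots,n_r)$. The $F$-split component of the centre of $M_{(\underline n)} \cong \prod_{i=1}^r \GL_{n_i}(E)$ consists of the block-scalar matrices $\diag(\lambda_1 I_{n_1},\dots,\lambda_r I_{n_r})$ with $\lambda_i \in F^\times$; from $\theta(g) = w_\ell\,{}^t\sigma(g)^{-1}\,w_\ell$ (using $n_i = n_{r+1-i}$) $\theta$ acts on these by $(\lambda_1,\dots,\lambda_r)\mapsto(\lambda_r^{-1},\dots,\lambda_1^{-1})$. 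Therefore $S_{M_{(\underline n)}}$ is the connected subtorus defined by $\lambda_i = \lambda_{r+1-i}$, of dimension $\ceil{r/2}$, and it coincides with $A_G$ — which has dimension $1$ — precisely when $r \leq 2$. For a proper parabolic $r \geq 2$, so $r = 2$, and balancedness forces $n_1 = n_2 = n/2$ with $n$ even; conversely $M_{(n/2,n/2)}$ then satisfies $S_{M_{(n/2,n/2)}} = A_G$ and is $\theta$-elliptic. Hence $P_{(n/2,n/2)}$ (with $n$ even) is the only proper $\theta$-stable block-upper-triangular parabolic with a $\theta$-stable $\theta$-elliptic Levi factor.

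I expect the genuine work to lie in the criterion ``$L$ is $\theta$-elliptic $\iff S_L = A_G$'', and within it the forward direction: producing an honest rational regular cocharacter of $S_L$ and verifying carefully that $\theta(P(\nu))$ is the opposite parabolic, so that $P(\nu)$ is truly $\theta$-split and proper. The remaining ingredients — the block-matrix identity $\theta(P_{(\underline n)}) = P_{(\underline n)^{\operatorname{op}}}$, the action of $\theta$ on the central torus of $M_{(\underline n)}$, and the dimension count — are routine.
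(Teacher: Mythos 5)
Your argument is correct and follows essentially the same route as the paper: balancedness is detected by the action of $\theta$ on the block structure, and $\theta$-ellipticity is decided by the criterion $S_M=S_G$ together with an explicit computation of the $\theta$-split part of $A_{(\underline n)}$ (the paper simply cites this criterion from an earlier work rather than proving it via the cocharacter/$P(\nu)$ argument as you do, but your proof of it is sound). One cosmetic slip: for a balanced partition the standard Levi is $\theta$-stable because $\theta(M_{(\underline n)})=M_{(\underline n)^{\operatorname{op}}}=M_{(\underline n)}$, not because it equals $P_{(\underline n)}\cap\theta(P_{(\underline n)})$ --- that identity describes the Levi of a $\theta$-\emph{split} parabolic, whereas here $P_{(\underline n)}\cap\theta(P_{(\underline n)})=P_{(\underline n)}$.
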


\begin{proof}
Let $(\underline n) = (n_1,\ldots, n_r)$ be a partition of $n$.  Let $A = A_{(\underline n)}$ be the diagonal $F$-split torus corresponding to $(\underline n)$.
The parabolic subgroup $P = P_{(\underline n)}$ is $\theta$-stable if and only if its standard Levi subgroup $M = C_G(A)$ and unipotent radical $N$ are $\theta$-stable; moreover, $M$ is $\theta$-stable if and only if $A$ is $\theta$-stable.
Observe that $A$ is $\theta$-stable if and only if $w_\ell \in N_G(A)$. Indeed, $\theta(a) = w_\ell^{-1} {}^t \sigma({a})^{-1} w_\ell$ and $A$ is stable under the involution $a \mapsto {}^t \sigma({a})^{-1} = {a}^{-1}$.
It is immediate that $\theta(A) = A_{(\underline n)^{\operatorname{op}}}$; moreover, 
$A$ is $\theta$-stable if and only if $(\underline n)^{\operatorname{op}} = (\underline n)$ if and only if $(\underline n)$ is balanced.
Therefore, it suffices to show that if $N$ is $\theta$-stable  if and only if $(\underline n)$ is balanced.  
First note that $N$ is stable under the map $n \mapsto \sigma({n})^{-1}$, on the other hand, the transpose map sends $N$ to the opposite unipotent radical $N^{\operatorname{op}}$.   
In particular, $N$ is $\theta$-stable if and only if $N = w_\ell^{-1} N^{\operatorname{op}} w_\ell$.
A simple matrix computation shows that this occurs if and only if  $(\underline n)^{\operatorname{op}} = (\underline n)$, i.e., $(\underline n)$ is balanced.

Let $(\underline n) = (n_1,\ldots, n_{\floor{r/2}},\widehat{n_\bullet}, n_{\floor{r/2}},\ldots,n_1)$ be a balanced partition of $n$.
Now, we show that $M$ is $\theta$-elliptic if and only if $(\underline n) = (n/2,n/2)$ by applying \cite[Lemma 3.8]{smith2018}, which states that a $\theta$-stable Levi subgroup $M$ is $\theta$-elliptic if and only if $S_M = S_G$.
An element $a = \diag(a_1,\ldots, a_n)$ of $A_T$ is $\theta$-split if and only if $a$ centralizes $w_\ell$. 
Indeed, since $A_T$ is pointwise fixed by taking the transpose-Galois conjugates, we have that
\begin{align*}
\theta(a) &= w_\ell^{-1} \diag(a_1^{-1},\ldots, a_n^{-1}) w_\ell 
	       = \diag(a_n^{-1},\ldots, a_1^{-1}), 
\end{align*}
which is equal to $a^{-1}$ if and only if $a_i = a_{n+1-i}$, for all $1\leq i \leq n$.  It follows that $a^{-1} = \theta(a)$ if and only if $a \in C_{A_T}(w_\ell)$, where
\begin{align*}
C_{A_T}(w_\ell) & = \{\diag(a_1,\ldots, a_{\floor{n/2}},\widehat a_\bullet, a_{\floor{n/2}},\ldots, a_1) : a_i \in F^\times, 1\leq i \leq \floor{n/2} \}.
\end{align*}
The $(\theta,F)$-split component $S_M$ of $M$ is thus equal to the identity component of $A \cap C_{A_T}(w_\ell)$.
We have that $A \cap C_{A_T}(w_\ell)$ is equal to the $F$-split torus
\begin{align}\label{eq-centralize-w-ell}
\left\{ \diag(\underbrace{a_1,..., a_1}_{n_1},..., \underbrace{a_{\floor{r/2}},..., a_{\floor{r/2}}}_{n_{\floor{r/2}}},\widehat{\underbrace{a_{\bullet},..., a_{\bullet}}_{n_\bullet}},\underbrace{a_{\floor{r/2}},..., a_{\floor{r/2}}}_{n_{\floor{r/2}}},...,\underbrace{a_1,..., a_1}_{n_1}) \right\};
\end{align}
in particular, $S_M = A \cap C_{A_T}(w_\ell)$.
By \Cref{unitary-theta-split-comp}, we have $S_G = A_G$ and observe that $S_M$ is equal to $A_G$ if and only if $r=2$, that is, $n$ is even and $(\underline n) = (n/2,n/2)$, as claimed.  
\end{proof}

\begin{rmk}
When $n$ is even, we set $L= M_{(n/2,n/2)}$ and reiterate that $L$ is the only proper block-diagonal $\theta$-elliptic Levi subgroup of $G$.
\end{rmk}

\begin{cor}\label{min-stable-pblc-unitary}
The minimal parabolic (Borel) subgroup $Q_0$ of $G$ consisting of the upper-triangular matrices is a $\theta$-stable minimal parabolic of $G$.  In particular, $\mathbf Q_0 = R_{E/F} {\bf B}$, where ${\bf B}$ is the upper-triangular Borel subgroup of $\GL_n$.
\end{cor}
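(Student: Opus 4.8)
The plan is to realize $Q_0$ as the $\Delta$-standard parabolic subgroup $P_{(\underline n)}$ attached to the partition $(\underline n)=(1,1,\ldots,1)$ of $n$ into $n$ parts, and then apply \Cref{balanced-stable-pblc}. The partition $(1,\ldots,1)$ is trivially balanced, since $n_i = 1 = n_{n+1-i}$ for every $i$, so \Cref{balanced-stable-pblc} immediately gives that the block upper-triangular parabolic it determines — namely $Q_0$ — is $\theta$-stable. That $Q_0$ is a \emph{minimal} parabolic $F$-subgroup of $G$ is standard: $A_T$ is a maximal $F$-split torus of $G$ with centralizer $C_G(A_T) = T$ (the full diagonal torus of $\GL_n(E)$), and $Q_0$ is the semidirect product of $T$ with the upper-triangular unipotent radical, hence one of the minimal parabolic $F$-subgroups containing $A_T$.

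For the ``in particular'' clause, recall from \Cref{sec-unitary-structure} that $\G = R_{E/F}\GL_n$, so that $G = \G(F)$ is identified with $\GL_n(E)$. Under this identification the upper-triangular matrices in $\GL_n(E)$ are precisely the $F$-points $(R_{E/F}\mathbf B)(F) = \mathbf B(E)$ of the Weil restriction of the upper-triangular Borel $\mathbf B$ of $\GL_n$, and since these are Zariski-dense in the smooth connected $F$-group $R_{E/F}\mathbf B$ we conclude $\mathbf Q_0 = R_{E/F}\mathbf B$; this also re-proves minimality, as Weil restriction carries the Borel $\mathbf B$ of $\GL_n$ to a minimal parabolic $F$-subgroup of $R_{E/F}\GL_n$.

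For completeness I would also record the direct check of $\theta$-stability that bypasses \Cref{balanced-stable-pblc}. For $b \in Q_0$ one computes $\theta(b) = w_\ell^{-1}\tran\sigma(b)^{-1}w_\ell$: Galois conjugation $\sigma$ acts entrywise and preserves upper-triangular shape, the map $g \mapsto \tran g^{-1}$ sends upper-triangular matrices to lower-triangular ones, and conjugation by $w_\ell = J_n$ — which implements the order-reversal of the standard basis — sends lower-triangular matrices back to upper-triangular ones; hence $\theta(b) \in Q_0$, and applying the involution $\theta$ once more gives $\theta(Q_0) = Q_0$. The argument is entirely routine, and I do not anticipate any genuine obstacle; the only point meriting a word of justification is the assertion that a Weil restriction of a Borel subgroup is again a minimal parabolic over the base field, which is classical.
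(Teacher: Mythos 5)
Your proposal is correct and follows the paper's own argument exactly: the paper's proof is simply the observation that $(1,\ldots,1)$ is balanced together with an appeal to \Cref{balanced-stable-pblc}. Your additional direct verification of $\theta$-stability and the identification $\mathbf Q_0 = R_{E/F}\mathbf B$ are sound but not needed beyond what the paper records.
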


\begin{proof}
The partition $(1,\ldots,1)$ is balanced; apply Lemma 4.15.
\end{proof}

\begin{cor}
The $F$-subgroup $Q_0 \cap H$ of $H$, consisting of the upper-triangular elements of $H$, is a Borel subgroup of $H$.
\end{cor}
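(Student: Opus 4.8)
The plan is to prove the statement at the level of algebraic groups: I will show that $\mathbf Q_0 \cap \mathbf H$ is a Borel subgroup of $\mathbf H$ defined over $F$, and then pass to $F$-points, using that $Q_0 \cap H = (\mathbf Q_0 \cap \mathbf H)(F)$.

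First I would record the relevant structure of $\mathbf Q_0$. By \Cref{min-stable-pblc-unitary}, $\mathbf Q_0 = R_{E/F}\mathbf B$ is a $\theta$-stable parabolic (in fact Borel) subgroup of $\mathbf G$; its standard Levi factor is $\mathbf T = R_{E/F}\mathbf T'$, where $\mathbf T'$ is the diagonal torus of $\GL_n$, which is $\theta$-stable, and its unipotent radical is $\theta$-stable as well, being canonically attached to the $\theta$-stable group $\mathbf Q_0$. Since $\mathbf H$ is a form of $\GL_n$ (over $\overline F$ the Hermitian form splits and $\mathbf H \cong \GL_n$), it is connected, so the structural fact recalled just before \Cref{lem-hom-injects} (\textit{cf.}~\cite{helminck--wang1993}, \cite[Lemma 3.1]{gurevich--offen2016}) applies: the identity component $\mathbf B_H := (\mathbf Q_0 \cap \mathbf H)^\circ$, with Levi factor $(\mathbf T^\theta)^\circ$, is a parabolic subgroup of $\mathbf H$.

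Next, $\mathbf B_H$ is a closed subgroup of the solvable group $\mathbf Q_0$, hence solvable; since every parabolic subgroup contains a Borel subgroup and a Borel subgroup is maximal among connected solvable subgroups, $\mathbf B_H$ must itself be a Borel subgroup of $\mathbf H$. It remains to upgrade this from $\mathbf B_H$ to $\mathbf Q_0 \cap \mathbf H$: the group $\mathbf Q_0 \cap \mathbf H$ normalizes its identity component $\mathbf B_H$, and Borel subgroups of the connected group $\mathbf H$ are self-normalizing, so $\mathbf Q_0 \cap \mathbf H = \mathbf B_H$. As $\mathbf Q_0$ and $\mathbf H$ are defined over $F$, so is $\mathbf B_H = \mathbf Q_0 \cap \mathbf H$; taking $F$-points shows that $Q_0 \cap H$ is the group of $F$-points of a Borel $F$-subgroup of the quasi-split group $\mathbf H$, i.e.\ a Borel subgroup of $H$.

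I do not anticipate a serious obstacle here; the one point needing care is that the cited structural fact only directly exhibits the \emph{identity component} of $\mathbf Q_0 \cap \mathbf H$ as a parabolic subgroup of $\mathbf H$, which is why the self-normalizing property of Borel subgroups is invoked to conclude. As a sanity check one may instead argue by base change to $\overline F$: there $\mathbf G \cong \GL_n \times \GL_n$, with $\theta$ interchanging the two factors and twisting by $g \mapsto w_\ell\,{}^t g^{-1} w_\ell$, so that $\mathbf H \cong \GL_n$ and $\mathbf Q_0 \cap \mathbf H \cong \mathbf B$, the upper-triangular Borel of $\GL_n$, because $w_\ell\,{}^t b^{-1} w_\ell$ is upper triangular whenever $b$ is.
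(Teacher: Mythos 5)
Your proof is correct. Note that the paper's own ``proof'' of this corollary is a bare citation to \cite[Lemma 3.1]{gurevich--offen2016}, so you have in effect supplied the argument the paper delegates to the literature. Your route is the natural one: the structural fact recalled just before \Cref{lem-hom-injects} (that the identity component of $\mathbf Q^\theta$ is a parabolic of $\mathbf H^\circ$ for a $\theta$-stable parabolic $\mathbf Q$ with $\theta$-stable Levi) applies to $\mathbf Q_0$ by \Cref{min-stable-pblc-unitary}, and the two supplementary observations you add --- that a connected solvable parabolic must be a Borel, and that self-normalization of Borel subgroups upgrades the identity component to the full intersection $\mathbf Q_0 \cap \mathbf H$ --- are exactly what is needed to close the gap between that structural fact and the stated corollary. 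The $\overline F$-base-change sanity check (where $\mathbf H \cong \GL_n$ and the two upper-triangularity conditions coincide because $w_\ell\,{}^t b^{-1} w_\ell$ is upper triangular for $b$ upper triangular) is also correct and gives a clean independent verification. The only stylistic caveat is that your argument quietly uses connectedness of the unitary group $\mathbf H$ to identify $\mathbf H^\circ$ with $\mathbf H$; you do flag this, and it is standard, so there is no gap.
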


\begin{proof}
See, for instance, \cite[Lemma 3.1]{gurevich--offen2016}.
\end{proof}

\begin{lem}
There are no proper $\theta$-elliptic Levi subgroups of $G$ that contain $A_0$.
\end{lem}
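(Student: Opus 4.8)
The plan is to exploit Lemma~\ref{lem-root-to-neg}, which says that $\theta(\alpha)=-\alpha$ for every $\alpha\in\Phi_0$, together with the characterization of $\theta$-elliptic Levi subgroups via their $(\theta,F)$-split component. First I would recall from \cite[Lemma 3.8]{smith2018} (already invoked in the proof of Lemma~\ref{balanced-stable-pblc}) that a $\theta$-stable Levi subgroup $M$ is $\theta$-elliptic if and only if $S_M=S_G$. By Lemma~\ref{unitary-theta-split-comp} we have $S_G=A_G$, so it suffices to show that any proper Levi subgroup $M$ containing $A_0$ has $S_M\supsetneq A_G$, equivalently $S_M\neq A_G$.

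Next I would observe that since $A_0$ is a maximal $F$-split torus of $G$, a Levi subgroup $M$ containing $A_0$ is standard with respect to $A_0$: it is of the form $M_\Theta=C_G(A_\Theta)$ for some subset $\Theta\subsetneq\Delta_0$, with $A_\Theta=\bigl(\bigcap_{\alpha\in\Theta}\ker\alpha\bigr)^\circ$ the $F$-split component of its centre. If $M$ is proper then $\Theta\neq\Delta_0$, so $A_\Theta$ has positive dimension strictly larger than that of $A_G$; indeed $A_G=A_{\Delta_0}$ corresponds to $\Theta=\Delta_0$. The key point is then that $A_\Theta$ is automatically $(\theta,F)$-split: by Lemma~\ref{lem-root-to-neg} every root of $A_0$ is sent to its negative by $\theta$, so for $a\in A_\Theta$ and any character $\chi$ of $A_0$ lying in the span of $\Phi_0$ we get $\chi(\theta(a))=\chi(a)^{-1}$; since $A_0$ (hence $A_\Theta$) sits inside the derived group directions spanned by $\Phi_0$ modulo $A_G$, and $A_G$ is itself $\theta$-split by Lemma~\ref{unitary-theta-split-comp}, one concludes $\theta(a)=a^{-1}$ on all of $A_\Theta$. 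Hence $A_\Theta\subseteq S_M$, so $\dim S_M\geq\dim A_\Theta>\dim A_G=\dim S_G$, and $M$ cannot be $\theta$-elliptic.

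Alternatively, and perhaps more cleanly, I would argue directly from Definition~\ref{theta-elliptic-defn}: by Proposition~\ref{std-theta-split} every $\Delta_0$-standard parabolic subgroup of $G$ is $\theta$-split, and every proper Levi $M_\Theta$ with $\Theta\subsetneq\Delta_0$ is contained in the proper $\theta$-split parabolic $P_\Theta$. Thus no proper Levi containing $A_0$ can fail to lie in a proper $\theta$-split parabolic, so none is $\theta$-elliptic. This is essentially immediate once one notes that a Levi subgroup of $G$ containing the maximal split torus $A_0$ is conjugate — in fact equal, after the standard reduction — to some $M_\Theta$.

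The main obstacle is the small bookkeeping point that a Levi subgroup containing $A_0$ really is $\Delta_0$-standard (up to the relevant conjugation), and that properness of $M$ forces $\Theta\subsetneq\Delta_0$ and hence $P_\Theta\subsetneq G$; once that is in place both arguments close immediately, and the second one sidesteps any need to track $(\theta,F)$-split components at all.
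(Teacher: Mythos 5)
Your first argument is essentially the paper's proof: the paper simply notes that $A_0$ is $(\theta,F)$-split and invokes \cite[Lemma 3.8]{smith2018} (the criterion $S_M=S_G$). The only difference is that you rederive the $\theta$-splitness of $A_\Theta$ from \Cref{lem-root-to-neg} plus \Cref{unitary-theta-split-comp}, whereas the paper already established directly (by the computation with $\gamma t\gamma^{-1}$ in \Cref{sec-unitary-structure}) that every element of $A_0$ is $\theta$-split, so $A_M\subset A_0$ gives $S_M=A_M\supsetneq A_G=S_G$ immediately for a proper Levi. Your second, alternative argument --- that any proper Levi containing $A_0$ lies in a proper parabolic containing $A_0$, which is $\theta$-split because $\theta$ negates every root of $A_0$, so ellipticity fails by \Cref{theta-elliptic-defn} --- is also correct and has the merit of avoiding \cite[Lemma 3.8]{smith2018} entirely; the bookkeeping point you flag (such a Levi is $A_0$-semi-standard rather than literally $\Delta_0$-standard) is harmless, since the $\theta$-split property of the ambient parabolic follows from \Cref{lem-root-to-neg} for any parabolic containing $A_0$, not just the $\Delta_0$-standard ones.
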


\begin{proof}
The maximal $F$-split torus $A_0$ of $G$ is $(\theta,F)$-split. The lemma follows from \cite[Lemma 3.8]{smith2018}.
\end{proof}

Recall that a parabolic subgroup $P$ of $G$ is called $A_T$-semi-standard if $P$ contains $A_T$. If $P$ is an $A_T$-semi-standard parabolic subgroup, then there is a unique Levi factor $M$ of $P$ that contains $A_T$. We refer to $M$ as the $A_T$-semi-standard Levi factor of $P$.

\begin{lem}\label{lem-A_T-std-theta-ell-Levi}
Let $n \geq 2$ be an integer.
\begin{enumerate}
\item If $n$ is odd, then there are no proper $\theta$-elliptic $A_T$-semi-standard Levi subgroups of $G = \GL_n(E)$.
\item If $n$ is even, then $L = M_{(n/2,n/2)}$ is the only maximal proper $\theta$-elliptic $A_T$-semi-standard Levi subgroup of $G=\GL_n(E)$, up to conjugacy by Weyl group elements $w \in W=W(G,A_T)$, such that $w^{-1}w_\ell w \in N_G(L) \setminus L$.
\end{enumerate}
\end{lem}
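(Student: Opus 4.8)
The plan is to parametrise the $A_T$-semi-standard Levi subgroups of $G = \GL_n(E)$ by set-partitions of $\{1,\dots,n\}$ and to translate $\theta$-stability and $\theta$-ellipticity into combinatorial conditions, exactly in the spirit of the proof of \Cref{balanced-stable-pblc}. Recall that any Levi subgroup $M$ of $\GL_n$ containing the diagonal torus $A_T$ is the centralizer $C_G(A)$ of a subtorus $A\subseteq A_T$, hence is cut out by an unordered partition $\mathcal P$ of $\{1,\dots,n\}$: writing $M = M_{\mathcal P}$, one has $M_{\mathcal P}\cong\prod_{P\in\mathcal P}\GL_{\abs{P}}(E)$, realized so that $g\in M_{\mathcal P}$ satisfies $g_{ij}=0$ unless $i,j$ lie in a common part of $\mathcal P$. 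Let $\iota$ denote the order-two permutation $i\mapsto n+1-i$, so that conjugation by $w_\ell = J_n$ realizes $\iota$ on coordinates, and for a partition $\mathcal P$ set $\iota(\mathcal P) = \{\iota(P):P\in\mathcal P\}$.

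First I would record the $\theta$-structure. Since $\theta(g) = w_\ell^{-1}\tran\sigma(g)^{-1}w_\ell$ and $g\mapsto\tran\sigma(g)^{-1}$ preserves each $M_{\mathcal P}$ (transpose, Galois conjugation, and inversion all respect the block pattern), conjugation by $w_\ell$ gives $\theta(M_{\mathcal P}) = M_{\iota(\mathcal P)}$; hence $M_{\mathcal P}$ is $\theta$-stable exactly when $\mathcal P$ is $\iota$-invariant. For such $\mathcal P$, I would compute the $(\theta,F)$-split component $S_{M_{\mathcal P}}$: for a diagonal $a\in A_{M_{\mathcal P}}$ (entries in $F^\times$, constant on the parts of $\mathcal P$) one has $\tran\sigma(a)^{-1} = a^{-1}$, so $\theta(a)=a^{-1}$ iff $a_i = a_{\iota(i)}$ for all $i$. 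Therefore $S_{M_{\mathcal P}}$ is the $F$-split torus attached to the coarsening $\mathcal P^\flat$ of $\mathcal P$ obtained by fusing each part $P$ with $\iota(P)$, and $\abs{\mathcal P^\flat} = (r+k)/2$ where $r = \abs{\mathcal P}$ and $k = \#\{P\in\mathcal P:\iota(P)=P\}$. By \cite[Lemma 3.8]{smith2018} together with \Cref{unitary-theta-split-comp}, $M_{\mathcal P}$ is $\theta$-elliptic iff $S_{M_{\mathcal P}} = S_G = A_G$, i.e. iff $\mathcal P^\flat$ is the trivial partition, i.e. iff $r+k=2$. Since $M_{\mathcal P}$ is proper iff $r\geq 2$, this forces $r=2$, $k=0$: $\mathcal P = \{P,\iota(P)\}$ with $\iota(P)\neq P$, hence $\abs{P}=\abs{\iota(P)}=n/2$. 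When $n$ is odd no such $\mathcal P$ exists, which is part (1).

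For part (2) we have $n$ even, and the proper $\theta$-elliptic $A_T$-semi-standard Levis are precisely the $M_{\{P,\iota(P)\}}\cong\GL_{n/2}(E)\times\GL_{n/2}(E)$ with $\abs{P}=n/2$, $\iota(P)\neq P$; each such is a maximal proper Levi of $G$ (the only Levi properly containing it is $G$ itself), so requiring "maximal proper" is no extra restriction. Writing $L = M_{\mathcal P_0}$ with $\mathcal P_0 = \{P_1,P_2\}$, $P_1 = \{1,\dots,n/2\}$, $P_2 = \{n/2+1,\dots,n\}$ (and noting $\iota$ interchanges $P_1,P_2$, so $L$ is one of these Levis), I would, given any other $\mathcal P = \{P,\iota(P)\}$ of this shape, pick $w\in W = S_n$ with $w(P_1) = P$; then automatically $w(P_2) = \iota(P)$, so $M_{\mathcal P} = wLw^{-1}$. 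It then remains to recognize the constraint on $w$: $wLw^{-1} = M_{\{wP_1,wP_2\}}$ is $\theta$-stable iff $\iota$ permutes $\{wP_1,wP_2\}$; if $\iota$ fixes each part then $w^{-1}w_\ell w$ preserves $P_1$ and $P_2$, so $w^{-1}w_\ell w\in L$ and $k=2$ ($M_{\{wP_1,wP_2\}}$ not $\theta$-elliptic); if $\iota$ swaps them then $w^{-1}w_\ell w$ interchanges the two blocks, so $w^{-1}w_\ell w\in N_G(L)\setminus L$ and $k=0$ ($\theta$-elliptic). Thus the proper (equivalently, maximal proper) $\theta$-elliptic $A_T$-semi-standard Levis are exactly $\{\,wLw^{-1} : w\in W,\ w^{-1}w_\ell w\in N_G(L)\setminus L\,\}$, which is the assertion of (2).

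I do not expect a genuine obstruction here: the content is bookkeeping about Levi subgroups of $\GL_n$ and the action of $\theta$. The one point requiring care is the final equivalence in part (2) — making precise that "$wLw^{-1}$ is $\theta$-elliptic" is the same as "$w^{-1}w_\ell w\in N_G(L)\setminus L$", and checking this exhausts all proper $\theta$-elliptic $A_T$-semi-standard Levis rather than only those reached by "block-respecting" $w$. This is why I would first establish intrinsically (via the $S_{M_{\mathcal P}} = A_G$ computation) that every such Levi has the form $M_{\{P,\iota(P)\}}$ with $\abs{P}=n/2$, and only afterward invoke the Weyl conjugacy of all partitions into two parts of size $n/2$.
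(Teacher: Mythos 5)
Your proof is correct and follows essentially the same route as the paper's (which is itself only a sketch): both reduce $\theta$-ellipticity to the criterion $S_M = S_G$ from \cite[Lemma 3.8]{smith2018} and then carry out the combinatorics of how $w_\ell$ acts on block-diagonal tori, your set-partition/$\iota$-coarsening bookkeeping being a cleaner packaging of the paper's ``two-colouring of a balanced partition'' argument. If anything, your version is slightly more complete, since parametrising \emph{all} $A_T$-semi-standard Levis at once avoids the paper's detour through maximal proper Levis and \Cref{contain-theta-elliptic} in part (1).
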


\begin{proof}
We give a sketch of the proof.
We identify the Weyl group $W = W(G,A_T)$ of $A_T$ in $G$ with the subgroup of permutation matrices in $G$.
By \Cref{balanced-stable-pblc}, if $n$ even, then $L = M_{(n/2,n/2)}$ is $\theta$-elliptic.

First, let $P= MN$ be a $\theta$-stable maximal proper $A_T$-semi-standard parabolic subgroup of $G$ with $\theta$-stable $A_T$-semi-standard Levi subgroup $M$.
It is well known that $P$ is $W$-conjugate to a unique standard (block upper-triangular) maximal parabolic subgroup of $G$.
In particular, $M = w M_{(n_1,n_2)}w^{-1}$ for some partition $(n_1,n_2)$ of $n$ and some $w\in W$.
Moreover, $M$ is $\theta$-stable if and only if its $F$-split component $A_M=wA_{(n_1,n_2)}w^{-1}$ is contained in a torus $A_{(\underline n)}$, for some balanced partition $(\underline n)$ of $n$.
Assume that $M$ is $\theta$-stable and let $(\underline n)$ be the coarsest partition such that $wA_{(n_1,n_2)}w^{-1}$ is contained in $A_{(\underline n)}$.
One may regard the $F$-split component $A_M = wA_{(n_1,n_2)}w^{-1}$ of $M$ as being obtained by a two-colouring of $(\underline n)$.
That is, regard $\diag(\underbrace{a,\ldots,a}_{n_1},\underbrace{b,\ldots,b}_{n_2}) \rightarrow A_{(\underline n)}$ as a two-colouring of the partition $(\underline n)$.
One may readily verify that:
\begin{enumerate}
\item If $n$ is odd, then, by considering \eqref{eq-centralize-w-ell}, we see that $wA_{(n_1,n_2)}w^{-1}$ contains at least a rank-one $F$-split torus, non-central in $G$, consisting of $\theta$-split elements.
(Indeed, since $n$ is odd, the central segment in \eqref{eq-centralize-w-ell} must appear.)
In particular, $M$ cannot be $\theta$-elliptic by \cite[Lemma 3.4]{smith2018}. 
Moreover, by \Cref{contain-theta-elliptic}, $M$ cannot contain a $\theta$-elliptic Levi subgroup of $G$.  It follows that there are no $A_T$-semi-standard $\theta$-elliptic Levi subgroups when $n$ is odd.
\item Suppose that $n$ is even.
In light of \eqref{eq-centralize-w-ell}, we see that $M$ is $\theta$-elliptic if and only if $(\underline n)$ is a refinement of the partition $(n/2,n/2)$ of $n$.  
In particular, it readily follows that $n_1 = n_2 = n/2$ and $M$ is conjugate to $L$.
\end{enumerate}
Observe that $\theta(w) = w_\ell^{-1} w w_\ell$, for any $w\in W$. It is straightforward to check that $M=wLw^{-1}$ is $\theta$-stable if and only if $w^{-1}w_\ell w \in N_G(L)$.
It can be verified that a $\theta$-stable conjugate $M=wLw^{-1}$ of $L$ is $\theta$-elliptic if and only if $w^{-1}w_\ell w \notin C_G(A_L) =  L$. Thus $L$ is the only maximal $A_T$-semi-standard $\theta$-elliptic Levi subgroup of $G$ (up to conjugacy).
\end{proof}

\begin{note}
Observe that $L = M_{(n/2,n/2)}$ does not contain any proper $\theta$-elliptic Levi subgroups. We argue by contradiction.
Suppose that $L' \subsetneq L$ is a $\theta$-elliptic Levi subgroup of $G$.  Notice that $L'$ is also $\theta$-elliptic in $L$.
Since $L'$ is proper in $L$, it follows from  \Cref{contain-theta-elliptic} that $L'$ is contained in a $\theta$-stable maximal proper Levi subgroup $L''$ of $L$.
Without loss of generality, $L'' \cong M_{(k_1,k_2)} \times \GL_{n/2}(F)$.  However, considering the action of $\theta$ on $L$ described in \eqref{theta-of-l}, we observe that no such Levi subgroup $L''$ can be $\theta$-stable.  
\end{note}

\Cref{lem-A_T-std-theta-ell-Levi} does not give a complete characterization of the $\theta$-elliptic Levi subgroups of $G$. The following lemma takes us closer to the desired result; in particular, up to $H$-conjugacy (and choice of standard parabolic) we have all of relevant $\theta$-stable parabolic subgroups.

\begin{lem}\label{semi-std-stbl-H-cong}
Let $\mathbf P$ be any $\theta$-stable parabolic subgroup of $\G$, then $P = \mathbf P(F)$ is $H$-conjugate to a $\theta$-stable $A_T$-semi-standard  parabolic subgroup.
\end{lem}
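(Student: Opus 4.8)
\emph{Proof strategy.} I would prove the lemma by translating it into the language of flags and invoking Witt's theorem. Recall that a parabolic $F$-subgroup of $\G = R_{E/F}\GL_n$ corresponds to a parabolic $E$-subgroup of $\GL_n$, i.e.\ to the stabiliser $P_{\mathcal F}$ in $G=\GL_n(E)$ of a flag $\mathcal F\colon 0 = V_0 \subsetneq V_1 \subsetneq \cdots \subsetneq V_k = E^n$ of $E$-subspaces of $E^n$. Let $h$ be the non-degenerate $\sigma$-Hermitian form $h(u,v) = \tran\sigma(u)\,w_\ell\,v$ on $E^n$; since $w_\ell = J_n$, this is the quasi-split Hermitian form, of maximal Witt index $\floor{n/2}$, and $H = \Hbf(F)$ is precisely its unitary group $U(h)$. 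Writing $g^\ast = w_\ell^{-1}\tran\sigma(g)\,w_\ell$ for the $h$-adjoint of $g$, a short computation gives $\theta(g) = (g^\ast)^{-1}$ for all $g$. Since an invertible operator $g$ stabilises a subspace $V$ if and only if its $h$-adjoint $g^\ast$ stabilises $V^{\perp_h}$, this yields $\theta(P_{\mathcal F}) = P_{\mathcal F^{\perp}}$, where $\mathcal F^{\perp}$ is the flag whose $j$-th term is $V_{k-j}^{\perp_h}$. Hence $P_{\mathcal F}$ is $\theta$-stable if and only if $\mathcal F$ is $h$-\emph{self-dual}, i.e.\ $V_j^{\perp_h} = V_{k-j}$ for all $j$. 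This reduces the lemma to a transitivity statement for self-dual flags.

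So suppose $\mathbf P$ is $\theta$-stable and write $P = \mathbf P(F) = P_{\mathcal F}$ with $\mathcal F$ self-dual, $d_j = \dim_E V_j$. Self-duality gives $d_{k-j} = n - d_j$, so the partition $(\underline m) = (m_1,\dots,m_k)$ with $m_j = d_j - d_{j-1}$ satisfies $m_{k+1-j} = m_j$: it is balanced; and $V_j \subseteq V_j^{\perp_h} = V_{k-j}$ whenever $2j \le k$, so the lower half of $\mathcal F$ is a chain of totally isotropic subspaces of $h$, of dimensions $d_j \le \floor{n/2}$. Let $\mathcal F_0$ be the coordinate flag with $j$-th term $\spn_E\{e_1,\dots,e_{d_j}\}$; from $h(e_i,e_l) = \delta_{i,n+1-l}$ one checks $\spn_E\{e_1,\dots,e_{d_j}\}^{\perp_h} = \spn_E\{e_1,\dots,e_{n-d_j}\}$, so $\mathcal F_0$ is self-dual, the associated parabolic $P_{\mathcal F_0} = P_{(\underline m)}$ is $\theta$-stable (consistently with \Cref{balanced-stable-pblc}), and $P_{(\underline m)}$ is $A_T$-semi-standard because each of its flag terms is a coordinate subspace, hence $A_T$-stable.

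It then remains to produce $g \in H$ with $g\mathcal F = \mathcal F_0$, and for this I would invoke Witt's extension theorem. Since $h$ is quasi-split, of Witt index $\floor{n/2}$, the unitary group $U(h)$ acts transitively on chains of totally isotropic subspaces of any prescribed dimensions $\le \floor{n/2}$; applying this to the lower halves of $\mathcal F$ and of $\mathcal F_0$ — which have the same dimensions $d_j$ — produces $g \in H$ carrying the one onto the other. Because $g$ preserves $h$, it carries each $V_j^{\perp_h}$ to $(gV_j)^{\perp_h}$, hence the entire self-dual flag $\mathcal F$ onto $\mathcal F_0$. Thus $gPg^{-1} = P_{(\underline m)}$, a $\theta$-stable $A_T$-semi-standard parabolic subgroup, which is exactly the assertion.

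The only steps that need genuine care are the bookkeeping behind $\theta(g) = (g^\ast)^{-1}$ and $\theta(P_{\mathcal F}) = P_{\mathcal F^{\perp}}$ (a routine manipulation of transpose, Galois conjugation, and conjugation by $w_\ell = J_n$), and the precise form of Witt's theorem invoked — a relative version giving transitivity on \emph{flags} of totally isotropic subspaces, not merely on single such subspaces; this last point, though entirely standard, is the one I regard as the main obstacle. A purely group-theoretic alternative — first choosing a $\theta$-stable Levi factor of $\mathbf P$ (via the vanishing of $H^1(\langle\theta\rangle, R_u(\mathbf P)(F))$ for the unipotent radical), then a $\theta$-stable maximal $F$-split torus inside it in the style of Helminck--Wang — is also available, but one must then check that the resulting torus can be taken $H$-conjugate to $A_T$, a point which the flag argument settles automatically (since $\theta$-stable maximal $F$-split tori of $G$ need not all be $H$-conjugate).
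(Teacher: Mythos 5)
Your proof is correct, but it takes a genuinely different route from the paper. The paper argues purely group-theoretically via Helminck--Wang: it takes a minimal $\theta$-stable parabolic $\mathbf P_\bullet\subset\mathbf P$ and a $\theta$-stable maximal $F$-split torus $\mathbf A_\bullet\subset\mathbf P_\bullet$, uses \cite[Corollary 5.8]{helminck--wang1993} to conjugate $\mathbf P_\bullet$ to $\mathbf Q_0$ by an element of the form $nh$ with $h\in H$ and $n$ normalizing $\mathbf A_\bullet$, and then uses \cite[Lemma 2.4]{helminck--wang1993} to conjugate the resulting $\theta$-stable maximal $F$-split torus to $A_T$ by an element of $(\Hbf\cap\mathbf U_0)(F)$ --- precisely settling the point you flag as the obstacle to the ``group-theoretic alternative'' you sketch at the end. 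Your argument instead exploits the concrete model of the symmetric pair: $H$ is the unitary group of the split Hermitian form $h(u,v)=\tran\sigma(u)w_\ell v$, $\theta$-stable parabolics are stabilizers of $h$-self-dual flags, and Witt's extension theorem (applied to a filtration-respecting linear isomorphism between the totally isotropic lower halves, which is automatically an isometry) gives transitivity of $H$ on self-dual flags of a given type. Each step you identify as needing care ($\theta(g)=(g^\ast)^{-1}$, $\theta(P_{\mathcal F})=P_{\mathcal F^\perp}$, the flag version of Witt) checks out. What the paper's proof buys is generality --- it is the argument one would use for an arbitrary symmetric pair, quoting standard structure theory. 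What yours buys is more information in this specific case: it is self-contained modulo Witt's theorem, it identifies the exact standard parabolic $P_{(\underline m)}$ (balanced partition) to which $P$ is conjugate, and it shows $H$ acts transitively on $\theta$-stable parabolics of a fixed type, not merely that each is conjugate to a semi-standard one.
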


\begin{proof}
First, note that by \Cref{min-stable-pblc-unitary} and \cite[Lemma 3.5]{helminck--wang1993}, the torus $(\mathbf{A}_T \cap \Hbf)^\circ$  is a maximal $F$-split torus of $H$.
Let $\mathbf P = \mathbf M\mathbf N$ be a $\theta$-stable parabolic subgroup with the indicated Levi factorization, where $\mathbf M$ and $\mathbf N$ are both $\theta$-stable.  Let $\mathbf P_\bullet$ be a minimal $\theta$-stable parabolic subgroup of $\G$ contained in $\mathbf P$.  Let $\mathbf A_\bullet$ be a $\theta$-stable maximal $F$-split torus contained in $\mathbf P_\bullet$ \cite[Lemma 2.5]{helminck--wang1993}.
By \cite[Corollary 5.8]{helminck--wang1993}, there exists $g = nh \in \left(N_\G(\mathbf A_\bullet) \cap N_\G((\mathbf A_{\bullet}\cap \Hbf)^\circ)\right)(F) \Hbf(F)$ such that $g^{-1} \mathbf P_\bullet g = \mathbf Q_0$.  Note that $n$ normalizes $(\mathbf A_{\bullet}\cap \Hbf)^\circ$ and all of $\mathbf A_\bullet$, while $h$ is $\theta$-fixed.
Observe that
\begin{align}
g^{-1}A_\bullet g = h^{-1} n^{-1} A_\bullet n h = h^{-1} A_\bullet  h \subset Q_0;
\end{align}
in particular, $g^{-1}A_\bullet g$ is a $\theta$-stable maximal $F$-split torus.
Let $\mathbf U_0$ be the unipotent radical of $\mathbf Q_0$.
By \cite[Lemma 2.4]{helminck--wang1993}, $g^{-1}A_\bullet g$ is $(\Hbf \cap \mathbf U_0)(F)$-conjugate to $A_T$.
It follows that there exists $h' \in (\Hbf \cap \mathbf U_0)(F)$ such that
\begin{align}
A_T & = h'^{-1} g^{-1}A_\bullet g  h' = h'^{-1} h^{-1}A_\bullet h  h' =  (h  h' )^{-1}A_\bullet (h  h');
\end{align}
moreover, we have that $A_T  = (h  h' )^{-1}A_\bullet (h  h')$ is contained in $(h  h' )^{-1}\mathbf P (h  h')$ and $\mathbf P$ is $H$-conjugate to a $\theta$-stable $A_T$-semi-standard  parabolic subgroup.
\end{proof}

Let $n\geq 2$ be an even integer. Let $l = \diag(x,y) \in L = M_{(n/2,n/2)}$.  We compute that
\begin{align}\label{theta-of-l}
\theta(l) & = w_\ell^{-1}  \left (\tran \sigma({l})^{-1} \right)w_\ell 
  = \left( \begin{matrix} J_{n/2}^{-1} \tran \sigma({y})^{-1}J_{n/2} & 0 \\ 0 &  J_{n/2}^{-1} \tran \sigma({x})^{-1} J_{n/2} \end{matrix} \right) .
\end{align}
It follows immediately from \eqref{theta-of-l} that $l$ is $\theta$-fixed if and only if 
\begin{align*}
	y &= \theta_{J_{n/2}}(x) = J_{n/2}^{-1} \left( \tran  \sigma(x)^{-1} \right) J_{n/2},
\end{align*}
and observes that, 
\begin{align}\label{theta-fixed-L}
L^\theta = \left \{ \left( \begin{matrix} x & 0 \\ 0 &  \theta_{J_{n/2}}(x) \end{matrix} \right) : x \in \GL_{n/2}(E) \right\} \cong \GL_{n/2}(E).
\end{align}
From \eqref{theta-fixed-L}, we immediately obtain a characterization of the $\theta$-fixed points of the associate Levi subgroup $M={}^\gamma L$ of the $\Delta_0$-standard parabolic subgroup $P = {}^\gamma Q$.

\begin{lem}\label{split-levi-fixed-pts}
The Levi subgroup $M = {}^\gamma L$ is the $\theta$-stable Levi subgroup of a standard $\theta$-split parabolic $P=MN = {}^\gamma P_{(n/2,n/2)}$.  The $\theta$-fixed points of $M$ are isomorphic to a product of two copies of the unitary group $\mathbf U_{E/F, 1_{n/2}} = \{ x \in \GL_{n/2}(E) : x^{-1} = \tran  \sigma(x)\}$, where $1_{n/2}$ is the $n/2\times n/2$ identity matrix.  
\end{lem}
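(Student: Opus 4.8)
The plan is to deduce the claim from the already-computed description of $L^\theta$ in \eqref{theta-fixed-L} by conjugating everything by $\gamma$ and tracking how the involution $\theta$ is modified. First I would observe that $P = {}^\gamma P_{(n/2,n/2)}$ is a $\Delta_0$-standard parabolic subgroup, hence $\theta$-split by \Cref{std-theta-split}, and that its $\theta$-stable Levi factor $P \cap \theta(P)$ is exactly $M = {}^\gamma L$: the partition $(n/2,n/2)$ is balanced, so (by the proof of \Cref{balanced-stable-pblc}) the block-diagonal group $L = M_{(n/2,n/2)}$ is the common $\theta$-stable Levi factor of $P_{(n/2,n/2)}$ and its opposite, and $\gamma$-conjugating gives $M = P \cap \theta(P)$. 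This establishes the first assertion of the lemma.

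For the description of $M^\theta$, the key point is that $\Int\gamma$ carries $M^\theta$ onto $L^{\theta'}$, where $\theta' = \gamma\cdot\theta$ in the sense of \Cref{defn-involution-action}; this is immediate from $\theta({}^\gamma l) = {}^\gamma l \iff (\Int\gamma^{-1}\circ\theta\circ\Int\gamma)(l) = l$. By the discussion following \Cref{defn-involution-action}, $\gamma\cdot\theta = \gamma\cdot\theta_{w_\ell} = \theta_D$, where $D = \tran\sigma(\gamma)\,w_\ell\,\gamma = \diag(\underbrace{2,\ldots,2}_{n/2},\underbrace{-2,\ldots,-2}_{n/2})$ is the diagonal $F$-split matrix computed above (for $n$ even there is no middle entry), and $\theta_D(g) = D^{-1}\,\tran\sigma(g)^{-1}\,D$ as in \eqref{x-involution}.

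Next I would restrict $\theta_D$ to $L$. Because $D$ is block-scalar with respect to the $(n/2,n/2)$ decomposition, conjugation by $D$ fixes every element of $L$, so $\theta_D(\diag(x,y)) = \diag(\tran\sigma(x)^{-1},\tran\sigma(y)^{-1})$ for $x,y \in \GL_{n/2}(E)$. Hence $\diag(x,y)$ is $\theta_D$-fixed precisely when $x^{-1} = \tran\sigma(x)$ and $y^{-1} = \tran\sigma(y)$, which gives $L^{\theta'} = \mathbf U_{E/F,1_{n/2}} \times \mathbf U_{E/F,1_{n/2}}$, and therefore $M^\theta = {}^\gamma(L^{\theta'}) \cong \mathbf U_{E/F,1_{n/2}} \times \mathbf U_{E/F,1_{n/2}}$, as desired.

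I do not anticipate a genuine obstacle here: the argument is pure bookkeeping, and the only two things to get right are the direction of the twist $\theta \mapsto \gamma\cdot\theta$ and the use of the identity $\tran\sigma(\gamma)\,w_\ell\,\gamma = D$ to see that the conjugating matrix becomes block-scalar on $L$ (so that $\theta_D$ preserves each $\GL_{n/2}(E)$ factor rather than interchanging them, in contrast with $\theta$ itself on $L$ as displayed in \eqref{theta-of-l}). A useful consistency check is that $\theta_D$ acts on $A_T$ by $a \mapsto a^{-1}$, which matches $A_0 = {}^\gamma A_T$ being $(\theta,F)$-split.
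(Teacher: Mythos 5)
Your proposal is correct and is essentially the paper's own argument: the paper directly computes $\theta({}^\gamma m) = \gamma\, \tran\sigma(m)^{-1}\gamma^{-1}$ using exactly the fact you isolate, namely that $\tran\sigma(\gamma)\, w_\ell\, \gamma$ lies in $A_L$ and hence centralizes $L$, which is your observation that $D$ is block-scalar so that $\theta_D$ acts on $L$ as $\diag(x,y)\mapsto \diag(\tran\sigma(x)^{-1},\tran\sigma(y)^{-1})$. Packaging the computation through the $G$-action on involutions ($\gamma\cdot\theta_{w_\ell}=\theta_D$) rather than a direct matrix manipulation is a cosmetic difference only.
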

\begin{proof}
Let ${}^\gamma m \in M$, where $m \in L$.  
We have that 
\begin{align*}
\theta({}^\gamma m) 
& =  w_\ell^{-1}\ \tran  \sigma(\gamma)^{-1}\ \tran  \sigma(m)^{-1}\ \tran  \sigma( \gamma) \ w_\ell \\
 &=  \gamma (\tran \sigma(\gamma) w_\ell \gamma )^{-1} \ \tran  \sigma({m})^{-1}\ (\tran \sigma(\gamma) w_\ell \gamma ) \gamma^{-1} \\
 &= \gamma\ \tran  \sigma({m})^{-1}  \gamma^{-1},
\end{align*}
where the last equality 
holds since $\tran \sigma(\gamma) w_\ell \gamma \in A_L$ centralizes $m\in L$.
It follows that ${}^\gamma m =  \theta({}^\gamma m)$ if and only if $m =   \tran  \sigma({m})^{-1}$. 
Writing $m$ as a block-diagonal matrix $m =\diag(x,y)$,  
we have $m = \tran  \sigma({m})^{-1}$ if and only if $x = \tran  \sigma({x})^{-1}$ and $y = \tran  \sigma({y})^{-1}$.  It follows that
\begin{align}\label{theta-fixed-M-split-Levi}
M^\theta =  \left\{ \gamma \left( \begin{matrix} x & 0 \\ 0 &  y \end{matrix} \right)\gamma^{-1} : x,y \in \GL_{n/2}(E), x = \tran  \sigma({x})^{-1}, y = \tran  \sigma({y})^{-1} \right \},
\end{align}
and $M^\theta \cong \mathbf U_{E/F, 1_{n/2}} \times \mathbf U_{E/F, 1_{n/2}}$, as claimed.
\end{proof}

In Lemma \ref{unitary-gen-split-fixed-pts}, we will determine the $\theta$-fixed points of the Levi subgroup of an arbitrary maximal $\theta$-split parabolic subgroup that has $\theta$-stable Levi factor associate to $L$.

\section{Relative discrete series for $\mathbf{U}_{E/F}(F) \backslash \GL_{2n}(E)$}\label{sec-rds}
From now on, let $G = \GL_{2n}(E)$, where $n\geq 2$, and let $H = \mathbf U_{E/F,w_\ell}(F)$ be the $\theta$-fixed points of $G$.
Recall that $H$ is (the $F$-points of) a quasi-split unitary group.
Let $Q=P_{(n,n)}$ be the type $(n,n)$ block-upper triangular parabolic subgroup of $G$, with block-diagonal Levi factor $L=M_{(n,n)}$ and unipotent radical $U_{(n,n)}$.

In this section we prove \Cref{thm-unitary-rds}, the main result of the paper.  We construct representations in the discrete series of $H\backslash G$ via parabolic induction from $L^\theta$-distinguished discrete series representations of $L$.

The parabolic subgroup $Q$ is conjugate to the $\Delta_0$-standard maximal $\theta$-split parabolic $P_\Omega$, where
$\Omega = \Delta_0 \setminus \{ {}^\gamma(\epsilon_{n} - \epsilon_{n+1}) \}$. 
In particular, $Q =\gamma^{-1} P_\Omega \gamma$,  $L = \gamma^{-1} M_\Omega \gamma$ and $U = \gamma^{-1} N_\Omega \gamma$.
For any $\Theta \subset \Delta_0$, we use the representatives $[W_\Theta \backslash W_0 / W_\Omega]$ for the double-coset space $P_\Theta \backslash G / P_\Omega$ (\textit{cf.}~\Cref{lem-nice-reps}).
There is an isomorphism $P_\Theta \backslash G / P_\Omega \cong P_\Theta \backslash G / Q$ given by $w \mapsto w \gamma$. 
\subsection{A few more ingredients}
Here, we assemble the remaining representation theoretic results needed to state and prove \Cref{thm-unitary-rds}.
\subsubsection{The inducing discrete series representations}\label{sec-inducing-ds}
The irreducible representations distinguished by arbitrary unitary groups are characterized in the paper \cite{feigon--lapid--offen2012}, continuing the work of Jacquet \textit{et al.}, for instance in \cite{jacquet--lai1985,jacquet2001,jacquet2010}.
Feigon, Lapid and Offen study both local and global distinction, largely using global methods.
In particular, they show that an irreducible square integrable representation $\pi$ of $G$ is $H^x$-distinguished if and only if $\pi$ is Galois invariant.
Although Feigon, Lapid and Offen prove much stronger results, we'll recall only what we need for our application. 
The following appears as \cite[Corollary 13.5]{feigon--lapid--offen2012}. 
Recall from \eqref{Hermitian} that $X$ is the variety of Hermitian matrices in $\GL_n(E)$.
\begin{thm}[Feigon--Lapid--Offen]\label{FLO-cor-13-5}
Let $\pi$ be an irreducible admissible essentially square integrable representation of $\GL_n(E)$.
For any $x\in X$, the following conditions are equivalent:
\begin{enumerate}
\item the representation $\pi$ is Galois invariant, that is $\pi \cong {}^\sigma \pi$.
\item the representation $\pi$ is $H^x$-distinguished.
\end{enumerate}
In addition, $\dim \Hom_{H^x}(\pi,1) \leq 1$.
\end{thm}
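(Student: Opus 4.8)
This is \cite[Corollary 13.5]{feigon--lapid--offen2012}, so the cleanest course would be to invoke it directly; I sketch below the shape of a from-scratch argument, which is essentially the one carried out by Feigon, Lapid and Offen. There are three assertions to establish: the implication (2)$\Rightarrow$(1), the implication (1)$\Rightarrow$(2) for \emph{every} $x\in X$ (not merely some $x$), and the multiplicity bound. For the multiplicity bound I would show that $(\GL_n(E),H^x)$ is a Gelfand pair by the Gelfand--Kazhdan method (after Flicker): exhibit the anti-involution $g\mapsto {}^{t}g$, check that it preserves $H^x$ up to conjugacy, and verify that it fixes every $H^x$-bi-invariant distribution on $\GL_n(E)$ --- which reduces to analysing the $H^x\times H^x$-orbits on $\GL_n(E)$, equivalently the $H^x$-orbits on the Hermitian variety $X$ --- whence $\dim\Hom_{H^x}(\pi,1)\le 1$.

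For the equivalence (1)$\Leftrightarrow$(2) the plan is to pass through a global field. Since $\pi$ is irreducible and essentially square integrable, I would first globalise it: produce a number field $\mathcal F$ with a place $v_0$ such that $\mathcal F_{v_0}\cong F$, a quadratic extension $\mathcal E/\mathcal F$ with $\mathcal E_{v_0}\cong E$, and a cuspidal automorphic representation $\Pi=\otimes_v\Pi_v$ of $\GL_n(\mathbb A_{\mathcal E})$ with $\Pi_{v_0}\cong\pi$ and controlled (unramified, or base-change) components elsewhere; this uses a standard globalisation of essentially square integrable representations in the style of Prasad or Deligne--Kazhdan--Vignéras. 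I would then invoke two global facts. First, the relative trace formula of Jacquet and Jacquet--Ye, in the form developed in \cite{feigon--lapid--offen2012}, which shows that the $\mathbf U_{\mathcal E/\mathcal F}$-period of a cuspidal $\Pi$ is nonvanishing precisely when $\Pi$ is Galois invariant, $\Pi\cong {}^\sigma\Pi$ (equivalently, $\Pi$ is a stable base change from the quasi-split unitary group). Second, for square integrable local data the global unitary period is Eulerian and, using the local uniqueness of the previous paragraph, factors as a product of local distinction functionals, so that the global period is nonzero iff every $\Pi_v$ is distinguished by the local unitary group attached to the local Hermitian form. Combining these: if $\pi$ is Galois invariant, arrange $\Pi$ to be Galois invariant (imposing this at the archimedean and auxiliary places without disturbing $\Pi_{v_0}$), conclude the period is nonzero, hence $\Pi_{v_0}=\pi$ is distinguished; the square integrable case moreover treats the two local Hermitian classes symmetrically, via the local character identities of base change, giving distinction for every $x\in X$. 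Conversely, if $\pi$ is $H^x$-distinguished, globalise with $\Pi_{v_0}\cong\pi$ and all other $\Pi_v$ locally distinguished, conclude the period is nonzero, hence $\Pi$ is Galois invariant, hence so is its local component $\pi=\Pi_{v_0}$, Galois invariance of the Langlands parameter being inherited under localisation.

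The hard part is the global input: proving that nonvanishing of the unitary period detects exactly the base changes --- the substance of the Jacquet--Ye/Jacquet relative trace formula and of \cite{feigon--lapid--offen2012} --- together with the local uniqueness and unramified computations needed to make the period Eulerian, and the globalisation with prescribed square-integrable component and prescribed Galois-invariance behaviour at the auxiliary places. Since all of this is already available in the literature, in the present paper I would simply cite \cite{feigon--lapid--offen2012} and use the statement as given.
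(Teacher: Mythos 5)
The paper offers no proof of this statement: it is imported verbatim from Feigon--Lapid--Offen, with the equivalence cited as \cite[Corollary 13.5]{feigon--lapid--offen2012} and the multiplicity bound as \cite[Proposition 13.3]{feigon--lapid--offen2012}. Your primary recommendation --- cite the result and move on --- is therefore exactly what the paper does, and your sketch of the globalisation/relative-trace-formula mechanism behind the equivalence is a fair description of how that reference proceeds.

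However, your proposed route to the multiplicity bound is genuinely wrong. A successful Gelfand--Kazhdan argument (exhibiting an anti-involution preserving $H^x$ and showing every $H^x$-bi-invariant distribution on $\GL_n(E)$ is fixed by it) would yield $\dim\Hom_{H^x}(\pi,1)\le 1$ for \emph{every} irreducible admissible $\pi$, i.e.\ that $(\GL_n(E),H^x)$ is a Gelfand pair. It is not: as the paper itself points out just after the statement, local multiplicity one fails for unitary periods in general, and \cite[Corollary 13.16]{feigon--lapid--offen2012} gives a lower bound greater than one for $\dim\Hom_{H^x}(\pi,1)$ for suitable Galois-invariant generic $\pi$. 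So the distribution-vanishing step in your scheme cannot be carried out; there must be relevant non-closed orbits on which invariant distributions are not forced to be transpose-invariant. The bound $\dim\Hom_{H^x}(\pi,1)\le 1$ is special to (essentially) square integrable $\pi$ (and, per \cite[Theorem 13.11]{feigon--lapid--offen2012}, to ladder representations), and Feigon--Lapid--Offen obtain it not by Gelfand--Kazhdan but through their explicit local Bessel identities relating the unitary period to Whittaker functionals on the base change, together with uniqueness of Whittaker models. If you intend to sketch a proof rather than cite, this part needs to be replaced; the circular risk is that your factorisation of the global period into local functionals already presupposes the local uniqueness you are trying to establish.
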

The multiplicity-one statement appears as \cite[Proposition 13.3]{feigon--lapid--offen2012}. It is known that local multiplicity-one for unitary groups does not hold in general, see \cite[Corollary 13.16]{feigon--lapid--offen2012}  for instance, which gives a lower bound for the dimension of $\Hom_{H^x}(\pi,1)$ for Galois invariant generic representations $\pi$.
On the other hand, Feigon, Lapid and Offen are able to extend \Cref{FLO-cor-13-5} to all {ladder representations} (\textit{cf.}~\cite[Theorem 13.11]{feigon--lapid--offen2012}).

\subsubsection{Distinction of inducing representations}
\begin{prop}\label{classify-L-theta-dist}
An irreducible admissible representation $\pi_1\otimes\pi_2$ of $L$ is $L^\theta$-distinguished if and only if $\pi_2$ is equivalent to the Galois-twist of $\pi_1$, that is, if and only if $\pi_2 \cong {}^\sigma \pi_1$.
\end{prop}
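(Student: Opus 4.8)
The plan is to reduce $L^{\theta}$-distinction of $\pi_1\otimes\pi_2$ to the existence of an invariant pairing between two representations of $\GL_n(E)$. Since $G=\GL_{2n}(E)$ and $L=M_{(n,n)}$, the computation in \eqref{theta-of-l}--\eqref{theta-fixed-L} (carried out with $2n$ in place of $n$) identifies the fixed-point subgroup with the graph
\[
L^{\theta}=\left\{\,\diag\big(x,\ \theta_{J_n}(x)\big):x\in\GL_n(E)\,\right\}\cong\GL_n(E),
\qquad \theta_{J_n}(x)=J_n^{-1}\,\tran\sigma(x)^{-1}\,J_n,
\]
of the automorphism $\theta_{J_n}$ of $\GL_n(E)$. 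Restricting $\pi_1\otimes\pi_2$ along $x\mapsto\diag(x,\theta_{J_n}(x))$ converts the outer tensor product into an inner one, so that
\[
\Hom_{L^{\theta}}(\pi_1\otimes\pi_2,1)\ \cong\ \Hom_{\GL_n(E)}\big(\pi_1\otimes(\pi_2\circ\theta_{J_n}),\,1\big).
\]

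Next I would apply the elementary fact that, for irreducible admissible representations $\rho_1,\rho_2$ of $\GL_n(E)$, one has $\Hom_{\GL_n(E)}(\rho_1\otimes\rho_2,1)\neq 0$ if and only if $\rho_2$ is isomorphic to the contragredient $\rho_1^{\vee}$: an invariant functional on $\rho_1\otimes\rho_2$ is the same thing as an invariant bilinear form $\rho_1\times\rho_2\to\C$, and by admissibility together with Schur's lemma such a form is nonzero precisely when $\rho_2\cong\rho_1^{\vee}$ (in which case it is unique up to scalar). Taking $\rho_1=\pi_1$ and $\rho_2=\pi_2\circ\theta_{J_n}$, the proposition reduces to the equivalence
\[
\pi_2\circ\theta_{J_n}\ \cong\ \pi_1^{\vee}\qquad\Longleftrightarrow\qquad \pi_2\ \cong\ {}^{\sigma}\pi_1.
\]

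To finish I would simplify $\pi_2\circ\theta_{J_n}$. Conjugation by $J_n$ is an inner automorphism, hence invisible to the isomorphism class: $\pi_2\circ\theta_{J_n}\cong\pi_2\circ\big(x\mapsto\tran\sigma(x)^{-1}\big)=\big(\pi_2\circ(x\mapsto\tran x^{-1})\big)\circ\sigma$. By the self-duality of $\GL_n$ under transpose--inverse (Gelfand--Kazhdan), the map $x\mapsto\tran x^{-1}$ sends any irreducible admissible representation to its contragredient, so $\pi_2\circ(x\mapsto\tran x^{-1})\cong\pi_2^{\vee}$; and Galois conjugation commutes with the contragredient, giving $\pi_2\circ\theta_{J_n}\cong{}^{\sigma}(\pi_2^{\vee})\cong({}^{\sigma}\pi_2)^{\vee}$. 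Since both $(\cdot)^{\vee}$ and ${}^{\sigma}(\cdot)$ are involutions on isomorphism classes of irreducible admissible representations (here using $\sigma^2=\id$), the condition $({}^{\sigma}\pi_2)^{\vee}\cong\pi_1^{\vee}$ is equivalent to ${}^{\sigma}\pi_2\cong\pi_1$, that is, to $\pi_2\cong{}^{\sigma}\pi_1$. As every step above is an equivalence, both directions of the proposition follow at once.

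The argument is essentially formal once the shape of $L^{\theta}$ is in hand; the only points demanding care are keeping the block matrix $J_n$ (coming from $w_{\ell}=J_{2n}$) straight in the definition of $\theta_{J_n}$, and the invocation of transpose--inverse duality for $\GL_n(E)$. I do not foresee a genuine obstacle; this is the direct analogue of the corresponding step in \cite{smith2018}.
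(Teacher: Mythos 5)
Your argument is correct and is essentially the paper's own proof: the paper also identifies $L^\theta$ as the graph of $\theta_{J_n}$ (via Lemma \ref{ind-data-dist}, stated for a general Hermitian matrix $x$), reduces distinction to an invariant pairing so that the condition becomes $\pi_2\cong{}^{\theta_x}\widetilde{\pi_1}$, and then uses the Gel'fand--Kazhdan transpose--inverse duality together with the innerness of conjugation by the Hermitian matrix and the compatibility of Galois twist with the contragredient to conclude ${}^{\theta_x}\widetilde{\pi_1}\cong{}^\sigma\pi_1$. Your computation applies the same manipulations to $\pi_2$ rather than $\pi_1$, which is an immaterial difference.
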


We actually prove a slightly more general result from which Proposition \ref{classify-L-theta-dist} is a trivial corollary, by taking into account the description of $L^\theta$ given in \eqref{theta-fixed-L}.

\begin{lem}\label{ind-data-dist}
Let $m \geq 1$ be an integer.
Let $G' = \GL_m(E) \times \GL_m(E)$, $x\in \GL_m(E)$ a Hermitian matrix, and define
\begin{align*}
H' =  \left \{ \left( \begin{matrix} A & 0 \\ 0 &  \theta_{x}(A) \end{matrix} \right) : A \in \GL_{m}(E) \right\}.
\end{align*}
An irreducible admissible representation $\pi_1\otimes \pi_2$ of $G'$ is $H'$-distinguished if and only if $\pi_2$ is equivalent to the Galois-twist of $\pi_1$, i.e.,  $\pi_2 \cong {}^\sigma \pi_1$.
\end{lem}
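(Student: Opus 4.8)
The plan is to recognise $H'$ as the graph of the $F$-involution $\theta_x$ of $\GL_m(E)$, which converts $H'$-distinction of $\pi_1\otimes\pi_2$ into an isomorphism between $\pi_1$ and a twist of $\pi_2$, in the spirit of a Gelfand-pair computation for $\GL_m(E)\times\GL_m(E)$.

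First I would observe that projection to the first factor identifies $H'$ with $\GL_m(E)$ via $A\mapsto(A,\theta_x(A))$ (recall $\theta_x$ is an automorphism of $\GL_m(E)$). Pulling back $\pi_1\otimes\pi_2$ along this embedding yields the representation $A\mapsto\pi_1(A)\otimes\pi_2(\theta_x(A))$ of $\GL_m(E)$, i.e. the inner tensor product $\pi_1\otimes(\pi_2\circ\theta_x)$. Since $\pi_1\otimes\pi_2$ is irreducible, $\pi_1$ and $\pi_2$ are irreducible admissible, hence so is the automorphism-twist $\pi_2\circ\theta_x$. By the standard tensor--$\Hom$ adjunction (a $\GL_m(E)$-invariant pairing on $\pi_1\times(\pi_2\circ\theta_x)$ is the same as a $\GL_m(E)$-map $\pi_1\to(\pi_2\circ\theta_x)^\vee$, its image being automatically smooth),
\begin{align*}
\Hom_{H'}(\pi_1\otimes\pi_2,1)&\cong\Hom_{\GL_m(E)}\bigl(\pi_1\otimes(\pi_2\circ\theta_x),1\bigr)\\
&\cong\Hom_{\GL_m(E)}\bigl(\pi_1,(\pi_2\circ\theta_x)^\vee\bigr),
\end{align*}
where $(\,\cdot\,)^\vee$ denotes the smooth contragredient. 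By Schur's Lemma this space is one-dimensional when $\pi_1\cong(\pi_2\circ\theta_x)^\vee$ and zero otherwise (which incidentally reproves $\dim\Hom_{H'}(\pi_1\otimes\pi_2,1)\le1$). Thus $H'$-distinction is equivalent to $\pi_1\cong(\pi_2\circ\theta_x)^\vee$.

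Next I would compute $(\pi_2\circ\theta_x)^\vee=\pi_2^\vee\circ\theta_x$ using the Gelfand--Kazhdan description of the contragredient for $\GL_m$ over $E$: $\pi_2^\vee\cong\pi_2\circ\iota$ with $\iota(g)=\tran g^{-1}$. Since $\theta_x(g)=x^{-1}\,\tran\sigma(g)^{-1}\,x$, a direct computation gives $\tran(\theta_x(g)^{-1})=\tran x\cdot\sigma(g)\cdot(\tran x)^{-1}$, whence
\[
(\pi_2^\vee\circ\theta_x)(g)\cong\pi_2\bigl(\tran(\theta_x(g)^{-1})\bigr)=\bigl(\pi_2\circ\Int(\tran x)\bigr)(\sigma(g))\cong{}^\sigma\pi_2(g),
\]
the last isomorphism because conjugation by $\tran x$ is inner. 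Hence $(\pi_2\circ\theta_x)^\vee\cong{}^\sigma\pi_2$, so the distinction criterion reads $\pi_1\cong{}^\sigma\pi_2$; applying $\sigma$ and using $\sigma^2=\mathrm{id}$, this is the same as $\pi_2\cong{}^\sigma\pi_1$, as claimed. Proposition~\ref{classify-L-theta-dist} then follows immediately upon identifying $L^\theta$ via \eqref{theta-fixed-L}.

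The only delicate point is the displayed transpose/inverse/Galois/inner-conjugation bookkeeping in the second paragraph, together with making the two standard inputs precise: the contragredient formula $\pi^\vee\cong\pi\circ\iota$ for $\GL_m(E)$, and the application of Schur's Lemma and the tensor--$\Hom$ adjunction (so one must record that $\pi_1$, $\pi_2$, and $\pi_2\circ\theta_x$ are all irreducible admissible, and that inner automorphisms preserve isomorphism classes of smooth representations). Everything else is formal.
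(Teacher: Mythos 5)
Your proposal is correct and follows essentially the same route as the paper: both arguments reduce $H'$-distinction to the standard graph-subgroup criterion (distinction iff one factor is the $\theta_x$-twisted contragredient of the other, which you justify via tensor--$\Hom$ adjunction and Schur where the paper simply asserts it), and then both identify the $\theta_x$-twisted contragredient with the Galois twist via the Gel'fand--Kazhdan theorem $\wt\pi\cong\pi\circ({}^t(\cdot)^{-1})$ together with the fact that conjugation by the Hermitian matrix $x$ is inner. The only cosmetic difference is that you compute $(\pi_2\circ\theta_x)^{\vee}\cong{}^\sigma\pi_2$ while the paper computes ${}^{\theta_x}\wt\pi_1\cong{}^\sigma\pi_1$; these are symmetric formulations of the same calculation.
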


\begin{proof}
First, note that a representation $\pi_1\otimes \pi_2$ is $H'$-distinguished if and only if $\pi_2$ is equivalent to ${}^{\theta_x} \wt \pi_1$, the $\theta_x$-twist of the contragredient of $\pi_1$.
It suffices to show that for any Hermitian matrix $x$ in $\GL_m(E)$, and any irreducible admissible representation $\pi$ of $\GL_m(E)$, the Galois-twisted representation ${}^\sigma\pi$ is equivalent to ${}^{\theta_x}\widetilde\pi$.
By a result of Gel'fand and Kazhdan \cite[Theorem 2]{gelfand--kazhdan1975}, we have that $\widetilde \pi$ is equivalent to $\widehat \pi$, where the representation $\widehat\pi$, is defined by $\widehat\pi(g) = \pi({}^t g^{-1})$ acting on the space $V$ of $\pi$.
Since $\pi$ is admissible, we have $\wt{\wt \pi} \cong \pi$;  
thus, we see that $\widehat{(\widetilde \pi)} \cong \pi$.
On the other hand, the representation ${}^{\theta_x}\widetilde\pi$ on $\widetilde {V}$ is given by ${}^{\theta_x}\widetilde\pi (g)  = \widetilde\pi (\theta_x(g))$.
Using that $x$ is Hermitian, it is readily verified that
\begin{align*}
{}^{\theta_x}\widetilde\pi (g) 
 &= {}^\sigma \widehat{(\widetilde\pi)} (x g x^{-1}) 
 =  {}^{x^{-1}}({}^\sigma \widehat{(\widetilde\pi)}) (g).
\end{align*}
We observe that ${}^{\theta_x}\widetilde\pi$ is equivalent to ${}^\sigma \widehat{(\widetilde\pi)}$ since $\operatorname{Int} x^{-1}$ is an inner automorphism of $\GL_m(E)$. 
It is also clear that taking Galois twists commutes with the map sending $\pi$ to $\widehat  \pi$ (and twisting by $\operatorname{Int} x^{-1}$, since $x$ is Hermitian).  
Finally, we have shown that 
\begin{align*}
{}^{\theta_x}\widetilde\pi 
\cong {}^{x^{-1}}({}^\sigma \widehat{(\widetilde\pi)}) 
\cong {}^\sigma \widehat{(\widetilde\pi)} 
\cong {}^\sigma \pi,
\end{align*}
as claimed.
\end{proof}

\subsubsection{$H$-distinction of an induced representation}
Let $\tau'$ be an irreducible admissible representation of $\GL_{n}(E)$ and define $\tau = \tau' \otimes {}^\sigma \tau'$.  
By \Cref{classify-L-theta-dist}, the irreducible admissible representation $\tau$ of $L$ is $L^\theta$-distinguished.  
Let $\lambda$ be a nonzero element of $\Hom_{L^\theta}(\tau,1)$. 
The invariant form $\lambda$ is defined using the pairing of $\tau'$ with its contragredient.  
By \cite[Proposition 4.3.2]{lapid--rogawski2003}, 
we have that $\delta_Q^{1/2}$ restricted to $L^\theta$ is equal to $\delta_{Q\cap H} = \delta_{Q^\theta}$. 
By \Cref{cor-hom-injects}, we have the following result.

\begin{prop}\label{distinction-pi}
Let $\tau'$ be an irreducible admissible representation of $\GL_{n}(E)$.
If $\tau = \tau' \otimes {}^\sigma \tau'$,
then the induced representation $\pi = \iota_Q^G \tau$  is $H$-distinguished.
\end{prop}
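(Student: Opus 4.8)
The plan is to assemble three facts that have essentially been put in place already: the $L^\theta$-distinction of the inducing datum, a matching of modular characters along $L^\theta$, and the Frobenius-reciprocity injection of \Cref{cor-hom-injects}. First I would record that $Q = P_{(n,n)}$ is a \emph{$\theta$-stable} parabolic subgroup of $G = \GL_{2n}(E)$: the partition $(n,n)$ of $2n$ is balanced, so \Cref{balanced-stable-pblc} shows both $Q$ and its block-diagonal Levi $L = M_{(n,n)}$ are $\theta$-stable (indeed $L$ is $\theta$-elliptic). Consequently the standing hypotheses of \Cref{lem-hom-injects} and \Cref{cor-hom-injects} hold for the pair $(Q,L)$, and $Q^\theta = L^\theta U^\theta$ has the expected Levi decomposition as the group of $F$-points of a parabolic subgroup of $H^\circ$.

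Next I would produce the nonzero $H$-invariant form. By \Cref{classify-L-theta-dist}, since $\tau = \tau' \otimes {}^\sigma\tau'$ has its second tensor factor equal to the Galois twist of its first, the representation $\tau$ of $L$ is $L^\theta$-distinguished; fix a nonzero $\lambda \in \Hom_{L^\theta}(\tau,1)$. Concretely, using the identification $L^\theta \cong \GL_n(E)$ from \eqref{theta-fixed-L}, in which $\diag(x,y) \in L^\theta$ exactly when $y = \theta_{J_n}(x)$, the argument of \Cref{ind-data-dist} shows that the restriction of $\tau' \otimes {}^\sigma\tau'$ along $x \mapsto \diag(x,\theta_{J_n}(x))$ affords the trivial character through the canonical $\GL_n(E)$-invariant pairing of $\tau'$ with its contragredient $\wt{\tau'}$, pre-composed with an inner automorphism; $\lambda$ is this pairing.

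Finally I would check the modular-character identity $\delta_Q^{1/2}\vert_{L^\theta} = \delta_{Q^\theta}$, which is precisely what upgrades \Cref{lem-hom-injects} to \Cref{cor-hom-injects}. The clean route is to quote \cite[Proposition 4.3.2]{lapid--rogawski2003}; alternatively one computes directly, noting that $\delta_Q(\diag(x,y)) = |\det x|_E^{\,n}\,|\det y|_E^{-n}$, that $|\det\theta_{J_n}(x)|_E = |\det x|_E^{-1}$, and hence $\delta_Q^{1/2}$ restricts on $L^\theta$ to $|\det x|_E^{\,n}$, which one matches against the modulus of $Q^\theta$ by computing the $L^\theta$-action on the $\theta$-fixed part of the Lie algebra of $U$. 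With this identity in hand, \Cref{cor-hom-injects} gives that $\lambda \mapsto \lambda^G$ injects $\Hom_{L^\theta}(\tau,1)$ into $\Hom_H(\pi,1)$, where $\ip{\lambda^G}{\phi} = \int_{Q^\theta\backslash H}\ip{\lambda}{\phi(h)}\,d\mu(h)$; in particular $\lambda^G \neq 0$, so $\pi = \iota_Q^G\tau$ is $H$-distinguished. The only step requiring genuine care is the modular-character identity, since everything else is either structural or already established; but that step is exactly what \cite{lapid--rogawski2003} supplies.
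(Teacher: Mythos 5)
Your proposal is correct and follows the same route as the paper: establish $L^\theta$-distinction of $\tau = \tau' \otimes {}^\sigma\tau'$ via \Cref{classify-L-theta-dist} (with $\lambda$ built from the pairing of $\tau'$ with its contragredient), invoke \cite[Proposition 4.3.2]{lapid--rogawski2003} for the identity $\delta_Q^{1/2}\vert_{L^\theta} = \delta_{Q^\theta}$, and conclude by the injection of \Cref{cor-hom-injects}. The extra details you supply (the $\theta$-stability of $Q$ from the balanced partition and the direct check that $\delta_Q^{1/2}$ restricts to $|\det x|_E^{\,n}$ on $L^\theta$) are accurate and consistent with the paper's setup.
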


\subsection{Computing exponents and distinction of Jacquet modules}\label{compute-exponents}
Let $P=MN$ be a proper $\theta$-split parabolic subgroup of $G$, with $\theta$-stable Levi factor $M$ and unipotent radical $N$.
By the Geometric Lemma (\Cref{geom-lem}) and \Cref{exp-irred-subq}, the exponents of $\pi = \iota_Q^G \tau$ along $P$ are given by
\begin{align*}
\Exp_{A_M}(\pi_N) = \bigcup_{y\in M\backslash S(M,L) / L} \Exp_{A_M}(\mathcal F_N^y(\tau)),
\end{align*}
and
the exponents $\Exp_{A_M}(\mathcal F_N^y(\tau))$ are the central characters of the irreducible subquotients of the representations $\mathcal F_N^y(\tau)$ (\textit{cf.}~\Cref{rmk-geom-lemma-notation}).  
Note that restriction of characters from $A_M$ to the $(\theta,F)$-split component $S_M$ provides a surjection from $\Exp_{A_M}(\pi_N)$ to $\Exp_{S_M}(\pi_N)$, for instance see \cite[Lemma 4.15]{smith2018}.

Let $y\in M\backslash S(M,L) / L$.
There are two situations that we need to consider: 
\begin{itemize}
\item[]\textit{Case (1):} when $P \cap {}^y L = {}^y L$, and
\item[]\textit{Case (2):} when $P \cap {}^y L \subsetneq {}^yL$ is a proper parabolic subgroup of ${}^yL$.
\end{itemize}

By \Cref{KT08-lem-2.5}, there exists a $\theta$-split subset $\Theta \subset \Delta_0$ and an element $g \in (\mathbf{T_0}\Hbf)(F)$ such that $P = g P_\Theta g^{-1}$.
We choose a representative of $y$ with the form $y = gw\gamma$, where $w \in [W_\Theta \backslash W_0 / W_\Omega]$. Recall that $Q = \gamma^{-1} P_\Omega \gamma$, $U = \gamma^{-1} N_\Omega \gamma$, and $L = \gamma^{-1} M_\Omega \gamma$, where $\Omega = \Delta_0 \setminus \{{}^\gamma(\epsilon_n - \epsilon_{n+1})\}$.
\subsubsection{Case (1)}\label{sec-case-1}
Suppose that $P \cap {}^y L = {}^y L$. Then $M \cap {}^yL ={}^yL \cong M_{(n,n)}$. Moreover, the Levi subgroup $M$ must be a maximal proper Levi subgroup of $G$ that is associate to $L$.  It follows that, in this case, $\Theta = \Omega$.

There are exactly two elements $w \in [W_\Omega \backslash W_0 / W_\Omega]$ such that $y = gw\gamma$ satisfies $M \cap {}^yL={}^yL$: the identity and ${}^\gamma w_L$, where 
\begin{align*}
w_L &= \left( \begin{matrix} 0 & 1_{n} \\  1_{n} & 0 \end{matrix} \right) \in N_G(L),
\end{align*} 
and $1_{n}$ is the $n\times n$ identity matrix.
It follows that, in Case (1), there are two elements $y$ that we need to consider: $y = ge\gamma = g\gamma$ and $y = g{}^\gamma w_L \gamma = g\gamma w_L$.

\begin{note}
For a representation $\tau'$ of $\GL_{n}(E)$, we have ${}^{w_L}(\tau' \otimes {}^\sigma\tau') \cong {}^\sigma\tau' \otimes \tau'$.\footnote{The Weyl group element $w_L$ has the property that ${}^{w_L}Q = Q^{\operatorname{op}}$.} 	
\end{note}

We obtain the following.

\begin{lem}\label{n-even-max-pi-N}
Let $\tau = \tau' \otimes {}^\sigma \tau'$ be an irreducible admissible representation of $L$.
Let $P = {}^gP_\Omega$, $g \in (\mathbf{T_0}\Hbf)(F)$, be a maximal $\theta$-split parabolic subgroup with Levi associate to $L$.
Let $\pi = \iota_Q^G \tau$. 
\begin{enumerate}
\item If $y = g\gamma$, then $ \mathcal F_N^y(\tau) = {}^{g\gamma} \tau ={}^{g\gamma} \left( \tau' \otimes {}^\sigma \tau' \right)$.
\item If $y = g\gamma w_L$, then $\mathcal F_N^y(\tau) = {}^{g\gamma w_L} \tau = {}^{g\gamma} \left({}^\sigma \tau' \otimes \tau'\right) $
\end{enumerate}
\end{lem}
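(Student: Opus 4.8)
The plan is to unwind the Geometric Lemma notation $\mathcal F_N^y(\tau) = \iota_{M \cap {}^yQ}^M\bigl(({}^y\tau)_{N \cap {}^yL}\bigr)$ in the special situation of Case (1), where $P \cap {}^yL = {}^yL$. The key observation is that the hypothesis $P \cap {}^yL = {}^yL$ forces $N \cap {}^yL = \{1\}$ (the unipotent radical of the parabolic $P \cap {}^yL$ of ${}^yL$ is trivial precisely when that parabolic is all of ${}^yL$), so the Jacquet module $({}^y\tau)_{N \cap {}^yL}$ is just ${}^y\tau$ itself. Dually, $M \cap {}^yQ$ is a parabolic subgroup of $M$ whose unipotent radical is $M \cap {}^yU$; but since $M \cap {}^yL = {}^yL$ has the same ``size'' as $M$ (both are associate to $L \cong M_{(n,n)}$, and $M$ is a maximal proper Levi of $G = \GL_{2n}(E)$), we get $M \cap {}^yU = \{1\}$ and $M \cap {}^yQ = M$. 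Hence $\iota_{M \cap {}^yQ}^M$ is the identity functor and $\mathcal F_N^y(\tau) = {}^y\tau$.

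Once that reduction is in place, the two parts are just bookkeeping with the chosen representative $y = g\gamma$ or $y = g\gamma w_L$. First I would record that ${}^y\tau = \tau \circ \Int y^{-1}$, and that for $y = g\gamma$ this is $\tau \circ \Int(g\gamma)^{-1} = {}^{g\gamma}\tau = {}^{g\gamma}(\tau' \otimes {}^\sigma\tau')$, giving (1). For (2), I would invoke the Note preceding the lemma: the Weyl element $w_L = \left(\begin{smallmatrix} 0 & 1_n \\ 1_n & 0\end{smallmatrix}\right)$ swaps the two $\GL_n(E)$-blocks of $L$, so ${}^{w_L}(\tau' \otimes {}^\sigma\tau') \cong {}^\sigma\tau' \otimes \tau'$ as representations of $L$. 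Then ${}^{g\gamma w_L}\tau = \tau \circ \Int(g\gamma w_L)^{-1} = \bigl({}^{w_L}\tau\bigr) \circ \Int(g\gamma)^{-1} = {}^{g\gamma}\bigl({}^\sigma\tau' \otimes \tau'\bigr)$, which is (2). I should be careful about the direction of conjugation (whether ${}^{ab}\tau = {}^a({}^b\tau)$ or ${}^b({}^a\tau)$) and make sure the composition-order convention in $\Int$ matches the paper's; a quick check against the definition ${}^y\rho = \rho \circ \Int y^{-1}$ settles this.

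The main obstacle — really the only nontrivial point — is justifying $M \cap {}^yQ = M$, i.e.\ that the parabolic $M \cap {}^yQ$ of $M$ has trivial unipotent radical. I would argue this dimension-theoretically: the Geometric Lemma tells us $M \cap {}^yL$ is a common Levi of both $M \cap {}^yQ$ (inside $M$) and $P \cap {}^yL$ (inside ${}^yL$); the hypothesis gives $M \cap {}^yL = {}^yL$, and since $M$ and ${}^yL$ are conjugate Levi subgroups of $G$ (both associate to $L$, as the text already notes, forcing $\Theta = \Omega$), they have equal dimension, so $M \cap {}^yL = {}^yL$ has the same dimension as $M$; a Levi subgroup of $M$ of full dimension must equal $M$, hence $M \cap {}^yQ = M$. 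Alternatively one can simply remark that in the rank-one relative situation at hand (maximal parabolic on both sides), the only double cosets with $P \cap {}^yL = {}^yL$ are exactly the two identified representatives, for which the containment $M \cap {}^yL = {}^yL \subseteq M$ is an equality by the associativity/maximality already established — so no unipotent part survives on either side. I expect this to take only a sentence or two once phrased correctly.
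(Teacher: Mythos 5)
Your proposal is correct and follows essentially the same route as the paper: both unwind $\mathcal F_N^y(\tau) = \iota_{M\cap{}^yQ}^M\bigl(({}^y\tau)_{N\cap{}^yL}\bigr)$ by showing $M\cap{}^yQ=M$ and $N\cap{}^yL=\{e\}$, so the functor collapses to the twist ${}^y\tau$, and then use ${}^{w_L}(\tau'\otimes{}^\sigma\tau')\cong{}^\sigma\tau'\otimes\tau'$ for the second representative. The only cosmetic difference is that the paper verifies $M\cap{}^yQ=M$ by direct computation from $y=g\gamma x$ with $x$ normalizing $L$, whereas you argue via dimensions of associate Levi subgroups; both are valid.
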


\begin{proof}
Indeed, for $y = g\gamma x$, where $x$ normalizes $L$, we have 
\begin{align*}
M \cap {}^y Q = M \cap g\gamma Q \gamma^{-1}g^{-1} = M \cap P = M
\end{align*}
 and 
 \begin{align*}
 P \cap {}^y L = P \cap g\gamma L \gamma^{-1}g^{-1} = P \cap M = M = {}^y L,
 \end{align*}
so we have that
 \begin{align*}
 \mathcal F^y_N (\tau)  = \iota_{M}^M \left(({}^y \tau)_{\{e\}}\right) = {}^y \tau.
 \end{align*}
\end{proof}

If $\tau$ is an irreducible unitary (e.g., a discrete series) representation of $L$, then the two subquotients $\mathcal F_N^{g\gamma}(\tau) = {}^{g\gamma} \left( \tau' \otimes {}^\sigma \tau' \right)$ and $\mathcal F_N^{g\gamma w_L}(\tau)={}^{g\gamma} \left({}^\sigma \tau' \otimes \tau'\right)$ of $\pi_N$ are irreducible and unitary.  
\subsubsection{Case (2)}\label{sec-case-2}
Suppose that $P \cap {}^y L \subsetneq {}^yL$ is a proper parabolic subgroup of ${}^yL$.
In particular, the Levi subgroup $M \cap {}^yL$ of $P \cap {}^y L$ is properly contained in ${}^yL$.
The following is the direct analogue of \cite[Proposition 8.5]{smith2018}.  The idea of the proof is to realize the exponents of $\pi$ along $P$ as restrictions of the exponents of $\tau$ along $P \cap {}^y L$ (\textit{cf.}~\cite[Lemma 4.16]{smith2018}) and to apply Casselman's Criterion to the discrete series $\tau$.
\begin{prop}\label{unitary-case2-exp-rel-cass}
Let $P$ be a maximal $\theta$-split parabolic subgroup of $G$ and $y\in P\backslash G / Q$ such that $P \cap {}^yL$ is a proper parabolic subgroup of ${}^yL$.
Let $\tau$ be a discrete series representation of $L$.
The exponents of $\pi=\iota_Q^G \tau$ along $P$ contributed by the subquotient $\mathcal F_N^y(\tau) = \iota_{M \cap {}^y Q}^M \left( {}^y \tau \right)_{N\cap {}^y L}$ of $\pi_N$ satisfy
the condition in \eqref{rel-casselman}.
\end{prop}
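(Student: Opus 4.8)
The plan is to follow the strategy of \cite[Proposition 8.5]{smith2018} verbatim in spirit: transfer the problem from the induced representation $\pi=\iota_Q^G\tau$ to the discrete series $\tau$ itself, where we can invoke the ordinary Casselman Criterion. First I would fix the representative $y=gw\gamma$ with $w\in[W_\Theta\backslash W_0/W_\Omega]$ and $g\in(\mathbf{T_0}\Hbf)(F)$, as set up just before the statement, so that the parabolic $P\cap{}^yL$ of ${}^yL$ corresponds (after conjugating ${}^yL$ back to $L$ via ${}^{y}$) to a genuine proper parabolic subgroup $R=M_R N_R$ of $L=M_{(n,n)}$. The subquotient in question is $\mathcal F_N^y(\tau)=\iota_{M\cap{}^yQ}^M\bigl(({}^y\tau)_{N\cap{}^yL}\bigr)$, and by \Cref{red-to-ind-exp} the exponents of this induced representation, restricted to $A_M$, come from the exponents of the Jacquet module $({}^y\tau)_{N\cap{}^yL}$ along $R$ — equivalently, transporting back, from $\Exp_{A_{M_R}}(\tau_{N_R})$ where $\tau_{N_R}$ is the Jacquet module of the discrete series $\tau$ along the proper parabolic $R$ of $L$.

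The second step is the bookkeeping that identifies, for $s\in S_M^-\setminus S_GS_M^1$, the quantity $|\chi(s)|$ (for $\chi$ an exponent of $\mathcal F_N^y(\tau)$ along $P$) with $|\mu(y^{-1}sy)|$, where $\mu$ is the corresponding exponent of $\tau$ along $R$ and $y^{-1}sy$ is a suitable dominant element of the split component of $M_R$ inside $L$. Here one uses that $\theta$-split tori conjugate under $(\mathbf{T_0}\Hbf)(F)$, that $A_M$ surjects onto $S_M$ with the relevant restriction compatible with $p$, and a compatibility of the dominance conditions: an element $s\in S_M^-$ that is not in $S_GS_M^1$ should map, under conjugation by $y$, into the strictly dominant cone for $R$ relative to $L$ — this is exactly the analogue of \cite[Lemma 4.16]{smith2018} and is where the hypothesis that $P\cap{}^yL$ is a \emph{proper} parabolic of ${}^yL$ is used (if it weren't proper we'd be in Case (1) and the exponents would be unitary, not strictly contracting). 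Once this translation is in place, Casselman's Criterion for the discrete series $\tau$ of $L$ (more precisely, since $\tau$ is a tensor of two discrete series of $\GL_n(E)$, the criterion applied to each factor, accounting for the central character normalization modulo $A_L$) gives $|\mu(y^{-1}sy)|<1$, hence $|\chi(s)|<1$, which is precisely \eqref{rel-casselman}.

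The main obstacle I expect is the third step: verifying that the strict inequality is not lost when passing to exponents of the \emph{parabolically induced} representation $\mathcal F_N^y(\tau)=\iota_{M\cap{}^yQ}^M((\,{}^y\tau)_{N\cap{}^yL})$ rather than of $\tau$ directly, and in particular controlling the modular-character shift $\delta$'s that appear in normalized induction and in the Jacquet-module normalization. Concretely, one must check that the twist by $\delta_{M\cap{}^yQ}^{1/2}$ relative to $\delta_Q^{1/2}$ does not push any exponent out of the open unit disc when evaluated at the relevant $s$; this amounts to a positivity statement about how $s$ pairs with the roots of $A_M$ occurring in $N$ versus those occurring in ${}^yU$, and it is the part of \cite[Proposition 8.5]{smith2018} that requires the most care. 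I would handle it by writing everything in terms of the root datum of $A_0$ and the $\theta$-base $\Delta_0$, using \Cref{lem-root-to-neg} (so that $\theta$ acts as $-1$ on $\Phi_0$, which makes the $\theta$-split combinatorics transparent) together with the explicit description $\Omega=\Delta_0\setminus\{{}^\gamma(\epsilon_n-\epsilon_{n+1})\}$, and reducing the needed inequality to the elementary fact that a $\Delta_0\setminus\Theta$-dominant element that is non-central contracts every positive root not in $\Phi_\Theta$. The remaining verifications — that the central character normalizations modulo $A_G$, $A_M$, and $A_L$ are mutually compatible, and that the surjection $\Exp_{A_M}\twoheadrightarrow\Exp_{S_M}$ preserves the inequality — are routine and I would cite \cite[Lemma 4.15, Lemma 4.16]{smith2018} for them.
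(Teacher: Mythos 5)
Your proposal is correct and follows essentially the same route as the paper: reduce the exponents of $\mathcal F_N^y(\tau)$ along $P$ to the exponents of the discrete series $\tau$ along the proper parabolic $P\cap{}^yL$ of ${}^yL$, and combine the containment of dominant cones $S_M^-\setminus S_M^1S_G\subset A_{M\cap{}^yL}^{-{}^yL}\setminus A_{M\cap{}^yL}^1A_{{}^yL}$ (the paper's \Cref{technical-torus-nbhd-Unitary-general}, which is the point you correctly flag as the crux) with Casselman's Criterion for $\tau$. The modular-character bookkeeping you worry about in your third step is absorbed by working throughout with normalized induction and normalized Jacquet modules, exactly as in the cited \cite[Proposition 8.5]{smith2018}.
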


\begin{proof}
If $\tau$ is a discrete series representation of $L$, then $\tau$ satisfies Casselman's Criterion (\textit{cf.}~\cite[Theorem 6.5.1]{Casselman-book}).
In light of \Cref{technical-torus-nbhd-Unitary-general}, the argument that the exponents of $\pi=\iota_Q^G\tau$ along $P$ satisfy the Relative Casselman's Criterion (\Cref{rel-casselman-crit}) follows exactly as in the proof of \cite[Proposition 8.5]{smith2018}.
\end{proof}

\subsubsection{Distinction of $\mathcal F_N^y(\tau)$ and $\pi_N$}\label{sec-dist-jacquet}
A consequence of \Cref{unitary-case2-exp-rel-cass} is that we need only consider distinction of (subquotients of) the Jacquet module of $\pi = \iota_Q^G\tau$ in \textit{Case (1)} (\textit{cf.}~\Cref{rel-casselman-crit} and \Cref{non-dist-gen-eig-sp}).
We now determine the $\theta$-fixed points of $M$ and study $M^\theta$-distinction of the irreducible subquotients of $\pi_N$ in the case that $P \cap {}^y L = {}^y L$. 

\begin{lem}\label{norm-op-L-equal-L}
The intersection $N_G(L) \cap Q^{\operatorname{op}}$ of the normalizer of $L$ in $G$ with the opposite parabolic $Q^{\operatorname{op}}$ of $Q$
 is equal to $L$.
\end{lem}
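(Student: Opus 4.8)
The plan is to compute the intersection $N_G(L) \cap Q^{\operatorname{op}}$ directly, using the fact that $L = M_{(n,n)}$ is the standard block-diagonal Levi and $Q = P_{(n,n)}$ is the standard block-upper-triangular parabolic, so that $Q^{\operatorname{op}} = P_{(n,n)}^{\operatorname{op}}$ is the block-lower-triangular parabolic. First I would recall the description of the normalizer: since $L$ is a Levi subgroup, $N_G(L)$ is generated by $L$ together with Weyl group representatives $w$ with $w L w^{-1} = L$; for the maximal parabolic of type $(n,n)$ in $\GL_{2n}$, the only nontrivial such representative (modulo $L$) is $w_L = \left(\begin{smallmatrix} 0 & 1_n \\ 1_n & 0 \end{smallmatrix}\right)$, the block-transposition. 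Thus $N_G(L) = L \sqcup L w_L = L \cup w_L L$.

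Next I would observe that $L \subset Q^{\operatorname{op}}$ already (the block-diagonal matrices sit inside the block-lower-triangular parabolic), so $L \subseteq N_G(L) \cap Q^{\operatorname{op}}$. It remains to show the coset $L w_L$ contributes nothing, i.e. $L w_L \cap Q^{\operatorname{op}} = \emptyset$. An element of $L w_L$ has the block form $\left(\begin{smallmatrix} 0 & x \\ y & 0 \end{smallmatrix}\right)$ with $x, y \in \GL_n(E)$ (here $\diag(x,y) \in L$ multiplied by $w_L$), which has a nonzero upper-right $n \times n$ block and a zero upper-left block; such a matrix is never block-lower-triangular (indeed its lower-left block being invertible forces it outside $Q^{\operatorname{op}}$ whenever the upper-left block vanishes but $n \geq 1$), so it cannot lie in $Q^{\operatorname{op}}$. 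Hence $N_G(L) \cap Q^{\operatorname{op}} = L$.

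The computation is essentially a one-line matrix block argument once the structure of $N_G(L)$ is in hand; the only mild subtlety, and what I expect to be the main point to get right, is the assertion that $N_G(L)/L$ is generated by the single class of $w_L$ — equivalently that no element of $L$ times a nontrivial permutation other than $w_L$ normalizes $L$. This follows from the standard fact that for a standard Levi $M_\Theta$ in $\GL_m$, $N_G(M_\Theta)/M_\Theta$ is identified with the subgroup of the Weyl group permuting the blocks of equal size among themselves; for the partition $(n,n)$ of $2n$ this subgroup has order two, generated by the swap $w_L$. I would cite the block-Bruhat description or simply verify directly that conjugation by $w_L$ sends $\diag(x,y) \mapsto \diag(y,x)$, preserving $L$, and that these exhaust $N_G(L)$ by a dimension/coset count. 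With that in place the lemma is immediate.
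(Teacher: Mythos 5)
Your argument is correct, but it runs in the opposite direction from the paper's. The paper starts from an arbitrary element $q = \left(\begin{smallmatrix} A & 0 \\ B & C\end{smallmatrix}\right)$ of $Q^{\operatorname{op}}$, imposes the normalizing condition $qlq^{-1}\in L$ for all $l=\diag(l_1,l_2)$, reads off from the lower-left block of $qlq^{-1}$ the identity $B = Cl_2C^{-1}Bl_1^{-1}$ for all $l_1,l_2$, and concludes $B=0$ by taking $l_2=1_n$ and choosing $l_1$ with $l_1-1_n$ invertible; no global description of $N_G(L)$ is needed. You instead first establish the coset decomposition $N_G(L)=L\sqcup Lw_L$ and then check that the nontrivial coset, consisting of matrices $\left(\begin{smallmatrix} 0 & x \\ y & 0\end{smallmatrix}\right)$ with $x$ invertible, misses $Q^{\operatorname{op}}$ because the upper-right block is nonzero. (Your parenthetical about the invertible lower-left block is not the relevant obstruction---membership in $Q^{\operatorname{op}}$ is precisely the vanishing of the upper-right block, which is what your first stated reason correctly uses---but this does not affect the proof.) Your route gives the sharper structural statement $N_G(L)\cap Q^{\operatorname{op}} = L$ as an immediate corollary of the standard fact $N_G(M_{(n,n)})/M_{(n,n)}\cong S_2$, which you rightly flag as the only real content and which should be justified or cited; the paper's route is entirely self-contained block arithmetic and proves only the inclusion it needs without ever describing $N_G(L)$ itself. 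Both are complete proofs.
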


\begin{proof}
Suppose that $q = \left( \begin{matrix} A & 0 \\ B & C \end{matrix} \right) \in N_G(L) \cap Q^{op}$, and let $l = \diag(l_1,l_2)$ be an arbitrary element of $L$. 
We have that
\begin{align*}
q l q^{-1}
 & = \left( \begin{matrix} A l_1 A^{-1} & 0 \\ B l_1 A^{-1} - C l_2 C^{-1}BA^{-1} & C l_2 C^{-1} \end{matrix} \right) \in L,
\end{align*}
and we see that
\begin{align*}
	B l_1 A^{-1} - C l_2 C^{-1}BA^{-1} &= 0,
\end{align*}
for all $l_1, l_2 \in \GL_{n}(E)$.
 This occurs if and only if $B = C l_2 C^{-1} B l_1^{-1}$ for all $l_1, l_2 \in \GL_{n}(E)$.  
If we take $l_2 = 1_{n}$ to be the identity, then $B l_1 = B$ for any $l_1 \in \GL_{n}(E)$, which occurs if and only if $B(l_1 - 1_{n}) = 0$ for any $l_1 \in \GL_{n}(E)$. 
In particular, since there exists $l_1 \in \GL_{n}(E)$ such that $l_1 - 1_{n}$ is invertible, we must have that $B= 0$.  The lemma follows.  
\end{proof}

\begin{lem}\label{unitary-gen-split-fixed-pts}
Let $M = g \gamma M_{(n,n)}\gamma^{-1}g^{-1}$, where $g \in (\Hbf \mathbf T_0)(F)$.
\begin{enumerate}
\item The subgroup $M^\theta$ of $\theta$-fixed points in $M$ is the $g\gamma$-conjugate of $L^{g\gamma\cdot\theta}$. 
\item We have that $L^{g\gamma\cdot\theta} = L^{\theta_{x_L}}$ is equal to the product $\mathbf U_{E/F, x_1} \times \mathbf U_{E/F, x_2}$ of unitary groups.
\item Explicitly,  $M^\theta = g\gamma \left( \mathbf U_{E/F, x_1} \times \mathbf U_{E/F, x_2} \right)\gamma^{-1} g^{-1}$ is isomorphic to a product of unitary groups.  
\item Let $\tau$ be an irreducible admissible representation of $L$.  Then ${}^{g\gamma} \tau$ is $M^\theta$-distinguished if and only if $\tau$ is $\mathbf U_{E/F, x_1} \times \mathbf U_{E/F, x_2}$-distinguished.
\end{enumerate}
\end{lem}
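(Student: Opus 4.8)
The plan is to prove the four parts of \Cref{unitary-gen-split-fixed-pts} in order, reducing everything to the explicit computation of \eqref{theta-of-l} for the block-diagonal Levi $L = M_{(n,n)}$ and keeping careful track of conjugation by $g\gamma$.

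First, for part (1), I would use the general principle (already used implicitly in \Cref{unitary-gen-split-fixed-pts}'s analogue \Cref{split-levi-fixed-pts}) that conjugation intertwines fixed-point subgroups: since $M = {}^{g\gamma}L$ and $g\gamma$ is a fixed element of $G$, an element $m \in M$ is $\theta$-fixed if and only if ${}^{(g\gamma)^{-1}}m \in L$ is fixed by the conjugated involution $(g\gamma)\cdot\theta = \Int(g\gamma)^{-1}\circ\theta\circ\Int(g\gamma)$. Hence $M^\theta = {}^{g\gamma}\bigl(L^{(g\gamma)\cdot\theta}\bigr)$, which is exactly the claim. Here I should note that $(g\gamma)\cdot\theta$ restricts to an involution of $L$ because $L$ is ${}^{(g\gamma)^{-1}}$-stable (since $M$ is $\theta$-stable), so the notation $L^{(g\gamma)\cdot\theta}$ makes sense.

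For part (2), the point is to identify $(g\gamma)\cdot\theta$ on $L$ with an involution of the form $\theta_{x_L}$ as in \eqref{x-involution}. Writing $\theta = \theta_{w_\ell}$, a direct computation gives $(g\gamma)\cdot\theta(l) = (g\gamma)^{-1}\,\theta(g\gamma\, l\,\gamma^{-1}g^{-1})\,g\gamma = x_L^{-1}\,\tran\sigma(l)^{-1}\,x_L$ where $x_L = \tran\sigma(g\gamma)\,w_\ell\,g\gamma$; since $g \in (\Hbf\mathbf T_0)(F)$ one checks (using that $\tran\sigma(\gamma)w_\ell\gamma \in A_T$ and that $g^{-1}\theta(g)\in M_0(F)$) that $x_L$ lies in $L$ and is Hermitian, in fact block-diagonal $x_L = \diag(x_1,x_2)$ with $x_1,x_2 \in \GL_n(E)$ Hermitian. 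Then $\theta_{x_L}$ preserves each block and restricts on $\GL_n(E)\times\GL_n(E)$ to the involution whose fixed points are $\mathbf U_{E/F,x_1}\times\mathbf U_{E/F,x_2}$ (exactly the structure appearing in \Cref{ind-data-dist} with the roles slightly rearranged — here the block-diagonal rather than ``twisted-diagonal'' form because $x_L$ is block-diagonal). Part (3) is then an immediate consequence of combining (1) and (2).

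For part (4), distinction transports through conjugation: since ${}^{g\gamma}\tau = \tau\circ\Int(g\gamma)^{-1}$ as a representation of $M$, and $M^\theta = {}^{g\gamma}(L^{(g\gamma)\cdot\theta}) = {}^{g\gamma}(\mathbf U_{E/F,x_1}\times\mathbf U_{E/F,x_2})$ by (2), the pullback along $\Int(g\gamma)$ induces an isomorphism $\Hom_{M^\theta}({}^{g\gamma}\tau, 1) \cong \Hom_{\mathbf U_{E/F,x_1}\times\mathbf U_{E/F,x_2}}(\tau, 1)$, so one side is nonzero iff the other is. The main obstacle I anticipate is part (2): verifying carefully that $x_L = \tran\sigma(g\gamma)w_\ell g\gamma$ actually lands in $L$ (i.e., is block-diagonal) and is Hermitian, which is where the hypothesis $g \in (\Hbf\mathbf T_0)(F)$ is essential — one needs $g^{-1}\theta(g) \in \mathbf M_0(F) \subset$ (the normalizer of the relevant diagonal structure), so that conjugating the anti-diagonal $w_\ell$ by $g\gamma$ and symmetrizing produces something that still centralizes, rather than merely normalizes, the relevant part of $A_T$. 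Everything else is formal bookkeeping about how fixed-point subgroups and distinction behave under conjugation.
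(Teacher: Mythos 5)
Your proposal is correct and follows essentially the same route as the paper: reduce to the conjugated involution $(g\gamma)\cdot\theta$ on $L$, identify it as $\theta_{x_L}$ for a Hermitian block-diagonal $x_L\in L$ (using $g\in(\Hbf\mathbf T_0)(F)$ to get $g^{-1}\theta(g)\in\mathbf M_0(F)$), and transport distinction through $\Int(g\gamma)$. The only difference is cosmetic: you take $x_L=\tran\sigma(g\gamma)\,w_\ell\,g\gamma=w_\ell\cdot(g\gamma)$ (which is automatically Hermitian, being in the $G$-orbit of $w_\ell$), while the paper takes $x_L=\gamma^{-1}g^{-1}\theta(g)\gamma$; the two differ by inversion and the central element $\tran\sigma(\gamma)w_\ell\gamma\in A_L$, so they define the same involution of $L$ and the same product of unitary groups.
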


\begin{proof}
By Lemma \ref{norm-op-L-equal-L}, we have that $x_L = \gamma^{-1} g^{-1}\theta(g)\gamma = \diag(x_1, x_2) \in L$; moreover, $x_L$ is Hermitian.
Indeed, since ${}^t \gamma = \gamma = \sigma(\gamma)$ and $w_\ell = {}^t w_\ell = \sigma( w_\ell)$, we have
\begin{align*}
{}^t \sigma( x_L )& = {}^t\sigma({\gamma^{-1}g^{-1}\theta(g)\gamma}) \\
& = {}^t\sigma({\gamma^{-1}g^{-1}w_\ell^{-1} ({}^t \sigma( g) ^{-1}) w_\ell \gamma}) \\
& = \sigma(  {}^t\gamma)	\sigma( {}^tw_\ell )  g ^{-1}	\sigma({}^t w_\ell)^{-1}	\sigma({}^tg)^{-1}	\sigma({}^t \gamma)^{-1}\\
& = \gamma w_\ell g^{-1} w_\ell^{-1} {}^t\sigma(g)^{-1} \gamma^{-1} \\
& =  \gamma w_\ell g^{-1} w_\ell^{-1} {}^t\sigma(g)^{-1} w_\ell w_\ell^{-1} \gamma^{-1} \\
& = \gamma w_\ell g^{-1} \theta(g) w_\ell^{-1} \gamma^{-1} \\
& = z \gamma^{-1} g^{-1} \theta(g) \gamma z^{-1} \\
& = z x_L z^{-1} \\
& = x_L,
\end{align*}
where $z = {}^t\sigma(\gamma) w_\ell \gamma = \gamma w_\ell \gamma \in A_L$.
In fact, we have that $x_1$ and $x_2$ are Hermitian elements of $\GL_{n}(E)$.

Upon restriction to $L$, $g\gamma \cdot \theta = x_L \cdot \theta_e = \Int x_L^{-1} \circ  {}^t \sigma{( \ )}^{-1}$. 
Note also that $x_L \cdot \theta_e  = \theta_{e\cdot x_L} = \theta_{x_L}$.
 In particular, $l \in L$ is $g\gamma \cdot \theta$-fixed if and only if $l$ is $\theta_{x_L}$-fixed.
 Explicitly, $l \in L$ is $\theta_{x_L}$-fixed if and only if $l = x^{-1} {}^t\sigma( l )^{-1}x_L$.
Since $x_L$ is Hermitian, we have that $L^{g\gamma\cdot\theta} = L^{\theta_{x_L}}$ is equal to the product $\mathbf U_{E/F, x_1} \times \mathbf U_{E/F, x_2}$ of unitary groups.
Now, observe that $M^\theta = g\gamma L^{g\gamma\cdot\theta} (g\gamma)^{-1}$.  
 Indeed, if $m = g\gamma l \gamma^{-1}g^{-1} \in M$, where $l \in L$, then 
$m$ is $\theta$-fixed if and only if $l = (g\gamma\cdot \theta)(l)$ is $(g\gamma\cdot \theta)$-fixed.
\end{proof}

It is interesting to note that, even though we're interested in distinction by the quasi-split unitary group $\Hbf=\mathbf U_{E/F, w_\ell}$, we will need to consider distinction by (possibly non-quasi-split) unitary groups for Jacquet modules.

Define a representation $\rho$ of a Levi subgroup $M$ of $G$ to be regular if for every non-trivial element $w\in N_G(M)/M$ we have that the twist ${}^w \rho = \rho ( w^{-1} (\cdot) w)$ is not equivalent to $\rho$.
A representation $\pi_1\otimes\pi_2$ of $L$ is regular if and only if $\pi_1\neq \pi_2$.
The next lemma is \cite[Lemma 8.2]{smith2018}.
\begin{lem}\label{unitary-exponents}
Assume that $\tau$ is a regular unitary irreducible admissible representation of $L$.  
Let $P=MN$ be a $\theta$-split parabolic subgroup with Levi associate to $L$.
If $y \in M\backslash S(M,L) / L$ is such that $P\cap {}^yL = {}^yL$, then $\mathcal F_N^y(\tau)$ is irreducible and the central character $\chi_{N,y}$ of $\mathcal F_N^y(\tau)$ is unitary.
\end{lem}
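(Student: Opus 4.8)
The plan is to combine \Cref{n-even-max-pi-N} with the multiplicity-one statement in \Cref{FLO-cor-13-5} and \Cref{unitary-gen-split-fixed-pts}. Since $\tau$ is regular, we have $\tau = \tau_1 \otimes \tau_2$ with $\tau_1 \not\cong \tau_2$. By hypothesis $P \cap {}^y L = {}^y L$, so $M \cap {}^y L = {}^y L$ and $M$ is a maximal proper Levi of $G$ associate to $L$; thus $\Theta = \Omega$ and, by \Cref{n-even-max-pi-N}, the representative $y$ may be taken to be either $g\gamma$ or $g\gamma w_L$ for some $g \in (\Hbf\mathbf{T_0})(F)$. In either case \Cref{n-even-max-pi-N} gives $\mathcal F_N^y(\tau) = {}^{y}\tau$, a twist of $\tau$ (respectively of ${}^{w_L}\tau \cong \tau_2 \otimes \tau_1$) by the element $g\gamma$.

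First I would establish irreducibility: the functor ${}^{(\cdot)}$ of twisting by a fixed group element is an equivalence of categories on smooth representations, so ${}^{g\gamma}\tau$ (and likewise ${}^{g\gamma w_L}\tau$) is irreducible because $\tau$ is. Here one does \emph{not} even need the regularity hypothesis; that hypothesis is what guarantees that the $y$ with $P\cap{}^yL={}^yL$ really do contribute the \emph{two distinct} subquotients ${}^{g\gamma}(\tau_1\otimes\tau_2)$ and ${}^{g\gamma}(\tau_2\otimes\tau_1)$ of $\pi_N$, rather than a single subquotient of multiplicity two --- this matches the statement of \Cref{n-even-max-pi-N} and \cite[Lemma 8.2]{smith2018}.

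Next I would identify the central character. The centre $A_{M}$ of $M$ equals ${}^{g\gamma}(A_L)$, where $A_L \cong E^\times \times E^\times$ sits diagonally in $L = \GL_n(E)\times\GL_n(E)$. Writing $\omega_i$ for the central character of $\tau_i$, the central character of $\tau$ on $A_L$ is $\omega_1 \otimes \omega_2$, and hence the central character $\chi_{N,y}$ of $\mathcal F_N^y(\tau) = {}^{g\gamma}\tau$ is $(\omega_1\otimes\omega_2)\circ \Int(g\gamma)^{-1}$, restricted to $A_M$ (in the $y = g\gamma w_L$ case one gets $(\omega_2\otimes\omega_1)\circ\Int(g\gamma)^{-1}$ instead). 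Since $\tau$ is unitary, each $\tau_i$ has unitary central character $\omega_i$, so $\omega_1\otimes\omega_2$ is unitary; conjugation by the fixed element $g\gamma$ is an isomorphism of $A_L$ onto $A_M$ and carries unitary characters to unitary characters. Therefore $\chi_{N,y}$ is unitary, which is the assertion.

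The only genuine subtlety --- and the step I would be most careful about --- is the bookkeeping that pins down \emph{which} $y \in M\backslash S(M,L)/L$ satisfy $P\cap {}^yL = {}^yL$, and the verification via \Cref{norm-op-L-equal-L} that the two such representatives are exactly $g\gamma$ and $g\gamma w_L$ (equivalently, that $N_G(L)\cap Q^{\mathrm{op}} = L$, so that the nice double-coset representatives in $[W_\Omega\backslash W_0/W_\Omega]$ preserving the Levi block structure are precisely $e$ and ${}^\gamma w_L$). This is the content of \Cref{n-even-max-pi-N} together with \Cref{norm-op-L-equal-L}, both already available, so the remaining work is the routine observation that twisting by a group element preserves irreducibility and unitarity of the central character. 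I would also remark that the multiplicity-one input of \Cref{FLO-cor-13-5} is not needed for this particular lemma; it enters later, when one analyses $M^\theta$-distinction of these subquotients via \Cref{unitary-gen-split-fixed-pts}.
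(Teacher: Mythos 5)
Your argument is correct and is essentially the proof behind \cite[Lemma 8.2]{smith2018}, which the paper cites here without reproducing: once the hypothesis $P\cap{}^yL={}^yL$ forces $\mathcal F_N^y(\tau)={}^y\tau$ (as in \Cref{n-even-max-pi-N}), irreducibility and unitarity of the central character follow because twisting by a fixed group element preserves both, and you are right that regularity is not needed for this conclusion but only later to separate the two subquotients. One cosmetic slip: $A_L$ is the $F$-split component $F^\times\times F^\times$ of $Z_L\cong E^\times\times E^\times$, not $E^\times\times E^\times$ itself, but this changes nothing in the argument.
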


From \Cref{n-even-max-pi-N}, it follows that $M^\theta$-distinction of ${}^{g\gamma} \tau$ (respectively, ${}^{g\gamma w_L} \tau$) is equivalent to $\mathbf U_{E/F, x_1}$-distinction of $\tau'$ and $\mathbf U_{E/F, x_2}$-distinction of ${}^\sigma \tau'$ (respectively, $\mathbf U_{E/F, x_1}$-distinction of ${}^\sigma \tau'$ and $\mathbf U_{E/F, x_2}$-distinction of  $\tau'$).
If $\tau = \tau' \otimes {}^\sigma \tau'$ is a regular discrete series representation, then $\tau' \ncong {}^\sigma\tau'$. It follows from \Cref{FLO-cor-13-5} that neither $\tau'$ nor ${}^\sigma \tau'$ can be distinguished by any unitary group. 
If $P=MN$ is any $\theta$-split parabolic such that $M$ is associate to $L$, then by \Cref{unitary-gen-split-fixed-pts}, neither of the irreducible subquotients of $\pi_N$,  described in \Cref{n-even-max-pi-N}, can be $M^\theta$-distinguished. 
We records this as the following.
\begin{cor}\label{no-dist-regularity}
Let $\pi = \iota_Q^G \tau$, where $\tau = \tau' \otimes {}^\sigma \tau'$ is a discrete series representation such that  $\tau'\ncong {}^\sigma \tau'$.  
Let  $P={}^gP_\Omega$, where $g\in (\Hbf \mathbf T_0)(F)$, be any maximal $\theta$-split parabolic subgroup with $\theta$-stable Levi $M=P\cap\theta(P)$ associate to $L$. 
Neither of the two irreducible unitary subquotients of $\pi_N$, twists of $\tau' \otimes {}^\sigma \tau'$ and ${}^\sigma\tau' \otimes \tau'$ (\textit{cf.}~\Cref{n-even-max-pi-N}), can be $M^\theta$-distinguished.
\end{cor}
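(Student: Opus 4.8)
The plan is to reduce $M^\theta$-distinction of each of the two irreducible subquotients of $\pi_N$ arising in \emph{Case (1)} to the question of whether the essentially square integrable representations $\tau'$ and ${}^\sigma\tau'$ of $\GL_n(E)$ are distinguished by a unitary group, and then to invoke the Feigon--Lapid--Offen characterization (\Cref{FLO-cor-13-5}) together with the regularity hypothesis $\tau'\ncong{}^\sigma\tau'$. First I would recall from \Cref{n-even-max-pi-N} that, for $P={}^gP_\Omega$ with $\theta$-stable Levi $M$ associate to $L$, the two subquotients are $\mathcal F_N^{g\gamma}(\tau)={}^{g\gamma}(\tau'\otimes{}^\sigma\tau')$ and $\mathcal F_N^{g\gamma w_L}(\tau)={}^{g\gamma}({}^\sigma\tau'\otimes\tau')$. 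By \Cref{unitary-gen-split-fixed-pts}, $M^\theta$ is the $g\gamma$-conjugate of $L^{g\gamma\cdot\theta}=L^{\theta_{x_L}}=\mathbf U_{E/F,x_1}\times\mathbf U_{E/F,x_2}$ for suitable Hermitian $x_1,x_2\in\GL_n(E)$, and part (4) of that lemma, applied to $\tau$ and to its Weyl twist ${}^{w_L}\tau\cong{}^\sigma\tau'\otimes\tau'$, reduces $M^\theta$-distinction of the two subquotients to $(\mathbf U_{E/F,x_1}\times\mathbf U_{E/F,x_2})$-distinction of $\tau'\otimes{}^\sigma\tau'$ and of ${}^\sigma\tau'\otimes\tau'$, respectively.

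Next I would use the standard fact that an irreducible admissible representation $\rho_1\otimes\rho_2$ of a product $G_1\times G_2$ is distinguished by a product $\Hbf_1\times\Hbf_2$ precisely when each $\rho_i$ is $\Hbf_i$-distinguished. Thus the first subquotient is $M^\theta$-distinguished if and only if $\tau'$ is $\mathbf U_{E/F,x_1}$-distinguished and ${}^\sigma\tau'$ is $\mathbf U_{E/F,x_2}$-distinguished, and the second if and only if the roles of $\tau'$ and ${}^\sigma\tau'$ are interchanged. Since $\tau$ is a discrete series representation of $L=\GL_n(E)\times\GL_n(E)$, both $\tau'$ and ${}^\sigma\tau'$ are essentially square integrable, so \Cref{FLO-cor-13-5} applies: each is distinguished by $\mathbf U_{E/F,x_i}$ if and only if it is Galois invariant. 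The hypothesis $\tau'\ncong{}^\sigma\tau'$ says $\tau'$ is not Galois invariant, and applying $\sigma$ shows ${}^\sigma\tau'$ is not Galois invariant either; hence neither $\tau'$ nor ${}^\sigma\tau'$ is distinguished by any unitary group associated to $E/F$. Consequently, for each subquotient at least one of the two required distinction conditions fails, and neither subquotient of $\pi_N$ is $M^\theta$-distinguished.

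I do not expect a genuine obstacle here, as the corollary is bookkeeping assembled from results already established; the two points requiring care are that \Cref{unitary-gen-split-fixed-pts} must be invoked for \emph{both} Weyl representatives $y=g\gamma$ and $y=g\gamma w_L$ (so for $\tau$ and for ${}^{w_L}\tau\cong{}^\sigma\tau'\otimes\tau'$), and that the Hermitian data $x_1,x_2$ produced by \Cref{unitary-gen-split-fixed-pts} may correspond to non-quasi-split unitary groups, which is harmless precisely because \Cref{FLO-cor-13-5} holds for \emph{every} $x\in X$, so Galois invariance is the obstruction no matter which unitary group appears.
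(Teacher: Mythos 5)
Your proposal is correct and follows essentially the same route as the paper: reduce $M^\theta$-distinction of the two subquotients via \Cref{n-even-max-pi-N} and \Cref{unitary-gen-split-fixed-pts} to distinction of $\tau'$ and ${}^\sigma\tau'$ by the (possibly non-quasi-split) unitary groups $\mathbf U_{E/F,x_1}$, $\mathbf U_{E/F,x_2}$, and then apply \Cref{FLO-cor-13-5} together with $\tau'\ncong{}^\sigma\tau'$. Your two cautionary remarks (applying the lemma to both Weyl representatives, and that \Cref{FLO-cor-13-5} holds for every $x\in X$) match the paper's own observations.
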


\subsection{Constructing relative discrete series}
The following theorem is our main result.  The argument is the same as the proof of \cite[Theorem 6.3]{smith-phd2017}.  
\begin{thm}\label{thm-unitary-rds}
Let $Q = P_{(n,n)}$ be the upper-triangular parabolic subgroup of $G$ with standard Levi factor $L= M_{(n,n)}$ and unipotent radical $U = N_{(n,n)}$.
Let $\pi = \iota_Q^G \tau$, where $\tau = \tau' \otimes {}^\sigma \tau'$, and $\tau'$ is a discrete series representation of $\GL_{n}(E)$ such that $\tau'$ is not Galois invariant, i.e.,  $\tau'\ncong {}^\sigma \tau'$.
The representation $\pi$ is a relative discrete series representation for $H\backslash G$ that does not occur in the discrete series of $G$.
\end{thm}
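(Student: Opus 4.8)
The plan is to produce a nonzero $H$-invariant form on $\pi$ and then to verify the Relative Casselman's Criterion (\Cref{rel-casselman-crit}). Since $\tau'\ncong{}^\sigma\tau'$, \Cref{classify-L-theta-dist} shows $\tau=\tau'\otimes{}^\sigma\tau'$ is $L^\theta$-distinguished; fix a nonzero $\lambda\in\Hom_{L^\theta}(\tau,1)$ and let $\lambda^G\in\Hom_H(\pi,1)$ be the form given by \Cref{cor-hom-injects}, which applies because $\delta_Q^{1/2}\vert_{L^\theta}=\delta_{Q^\theta}$ (as in \Cref{distinction-pi}). Being the normalized induction of the irreducible admissible $\tau$, the representation $\pi$ is finitely generated and admissible; it is moreover irreducible, by Bernstein--Zelevinsky, because $\tau'\ncong{}^\sigma\tau'$ (writing $\tau'=\st(\rho,a)$ with $\rho$ supercuspidal, the hypothesis gives $\rho\ncong{}^\sigma\rho$, and since $\sigma$ preserves $E$-adic valuations the Zelevinsky segments underlying $\tau'$ and ${}^\sigma\tau'$ are unlinked). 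It therefore suffices to show, for every proper $\theta$-split parabolic subgroup $P=MN$ of $G$, that $|\chi(s)|<1$ for all $\chi\in\Exp_{S_M}(\pi_N,\lambda_N)$ and all $s\in S_M^-\setminus S_GS_M^1$.

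To this end, write $P={}^gP_\Theta$ with $g\in(\mathbf{T_0}\Hbf)(F)$ and $\Theta\subset\Delta_0$ a $\theta$-split subset (\Cref{KT08-lem-2.5}), and apply the Geometric Lemma (\Cref{geom-lem}) with the representatives $y=gw\gamma$, $w\in[W_\Theta\backslash W_0/W_\Omega]$, so that the exponents of $\pi$ along $P$ are the restrictions to $S_M$ of the central characters of the irreducible subquotients of the $\mathcal F_N^y(\tau)$. Split the representatives according to whether $P\cap{}^yL={}^yL$ (Case 1) or $P\cap{}^yL$ is a proper parabolic subgroup of ${}^yL$ (Case 2). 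Case 2 is the only case arising unless $M$ is associate to $L$; there \Cref{unitary-case2-exp-rel-cass}, via the torus-neighbourhood comparison \Cref{technical-torus-nbhd-Unitary-general}, realizes the relevant exponents as restrictions of exponents of the discrete series $\tau$ along the proper parabolic $P\cap{}^yL$ of ${}^yL$, and the ordinary Casselman's Criterion \cite[Theorem 6.5.1]{Casselman-book} for $\tau$ delivers the strict inequality in \eqref{rel-casselman}; the same argument covers non-maximal $P$.

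Case 1 forces $M$ to be a maximal Levi subgroup associate to $L$ and leaves only $y\in\{g\gamma,\,g\gamma w_L\}$; for these, \Cref{n-even-max-pi-N} identifies $\mathcal F_N^y(\tau)$ with the irreducible unitary representation ${}^{g\gamma}(\tau'\otimes{}^\sigma\tau')$, respectively ${}^{g\gamma}({}^\sigma\tau'\otimes\tau')$. This is where $\tau'\ncong{}^\sigma\tau'$ re-enters: by \Cref{no-dist-regularity} (which uses \Cref{FLO-cor-13-5} and \Cref{unitary-gen-split-fixed-pts}) neither of these subquotients is $M^\theta$-distinguished, and since $\lambda_N$ is $M^\theta$-invariant (\Cref{rPlambda-prop}), \Cref{non-dist-gen-eig-sp} applied to $(\pi_N,V_N)$ with the subgroup $M^\theta$ forces $\lambda_N$ to vanish on the generalized $S_M$-eigenspace of every exponent contributed only in Case 1. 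Consequently each $\chi\in\Exp_{S_M}(\pi_N,\lambda_N)$ is a Case 2 exponent, hence satisfies \eqref{rel-casselman}, and \Cref{rel-casselman-crit} gives that $\pi$ is $(H,\lambda^G)$-relatively square integrable; the $\lambda^G$-relative matrix coefficients then embed the irreducible $\pi$ into $L^2(Z_GH\backslash G,\omega)$, so $\pi$ is a relative discrete series representation. Finally, $\pi$ is not square integrable as a representation of $G$: by the Geometric Lemma the unitary central character $\omega_\tau$ of $\tau$ occurs among the exponents of $\pi$ along $Q$, and $|\omega_\tau(s)|=1$ fails the strict decay demanded by the ordinary Casselman's Criterion on $A_L^-\setminus A_GA_L^1$.

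The step I expect to be the main obstacle is Case 2: tracing how the exponents of $\pi$ along an arbitrary $\theta$-split $P$ descend from exponents of $\tau$ along proper parabolic subgroups of the twisted Levi ${}^yL$, and matching the dominance condition cutting out $S_M^-$ with the chambers in Casselman's Criterion for $\tau$. This is the step that genuinely uses the discrete series hypothesis on $\tau'$, and the one requiring the structural input \Cref{technical-torus-nbhd-Unitary-general} together with careful bookkeeping of the tori $A_M$, $S_M$, $A_G$ and $S_G$.
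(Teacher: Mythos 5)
Your proposal is correct and follows essentially the same route as the paper's proof: establish distinction via \Cref{cor-hom-injects}/\Cref{distinction-pi}, reduce the Relative Casselman's Criterion to the two cases of the Geometric Lemma, dispose of Case (2) by \Cref{unitary-case2-exp-rel-cass} and \Cref{technical-torus-nbhd-Unitary-general}, and kill the unitary Case (1) exponents using \Cref{no-dist-regularity}, \Cref{FLO-cor-13-5} and \Cref{non-dist-gen-eig-sp}. The only (harmless) deviations are in the auxiliary facts: the paper deduces irreducibility of $\pi$ from Bruhat's theorem on regular unitary inducing data rather than from unlinked Zelevinsky segments, and cites Zelevinsky's classification for non-square-integrability rather than re-running the ordinary Casselman's Criterion along $Q$.
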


\begin{proof}
Since $\tau$ is unitary and regular, by result of Bruhat \cite{bruhat1961} (\textit{cf.}~\cite[Theorem 6.6.1]{Casselman-book}), $\pi$ is irreducible.
In addition, $\pi$ is $H$-distinguished by \Cref{distinction-pi}.  The representation $\pi$ does not occur in the discrete series of $G$ by Zelevinsky's classification \cite{zelevinsky1980}.  Let $\lambda$ denote a fixed $H$-invariant linear form on $\pi$.
 It suffices to show that $\pi$ satisfies the Relative Casselman's Criterion \Cref{rel-casselman-crit}.
Let $P=MN$ be a proper $\theta$-split parabolic subgroup of $G$.
The exponents of $\pi$ along $P$ are the central characters of the irreducible subquotients of the representations $\mathcal F^y_N(\tau)$ given by the Geometric Lemma \ref{geom-lem} 
(see \Cref{compute-exponents}).
By \cite[Lemma 4.6]{kato--takano2010} and \Cref{unitary-case2-exp-rel-cass}, 
the condition \eqref{rel-casselman} is satisfied when $P \cap {}^yL$ is a proper parabolic subgroup of ${}^y L$. 
As in \Cref{unitary-exponents}, the only unitary exponents of $\pi$ along $P$ occur when $P \cap {}^yL = {}^yL$.
By \Cref{no-dist-regularity}, the only irreducible unitary subquotients of $\pi_N$ cannot be $M^\theta$-distinguished when $P \cap {}^y L = {}^y L$.
In the latter case, by \Cref{non-dist-gen-eig-sp}, the unitary exponents of $\pi$ along $P$ do not contribute to $\Exp_{S_M}(\pi_N,\lambda_N)$.
Therefore, \eqref{rel-casselman} is satisfied for every proper $\theta$-split parabolic subgroup of $G$.  Finally, by \Cref{rel-casselman-crit} the representation $\pi$ appears in the discrete spectrum of $H \backslash G$.
In particular, $\pi$ is $(H,\lambda)$-relatively square integrable for all nonzero $\lambda \in \Hom_H(\pi,1)$. 
\end{proof}

In addition, we note the following existence results.
First, we recall the structure of the representations in the discrete spectrum of $\GL_n(E)$.
Let $\rho$ be an irreducible unitary supercuspidal representation of $\GL_r(E)$, $r\geq 1$.  For an integer $k\geq 2$, write $\st(k,\rho)$ for the unique irreducible (unitary) quotient of the parabolically induced representation 
\begin{align*}
\iota_{P_{(r,\ldots,r)}}^{\GL_{kr}(E)}\left(\nu^{\frac{1-k}{2}} \rho \otimes \nu^{\frac{3-k}{2}} \rho \otimes \ldots \otimes \nu^{\frac{k-1}{2}} \rho\right)
\end{align*}
 of $\GL_{kr}(E)$ (\textit{cf.}~\cite[Proposition 2.10, $\S$9.1]{zelevinsky1980}), where $\nu(g) = |\det(g)|_E$, for any $g \in \GL_r(E)$. 
The representations $\st(k,\rho)$ are the {generalized Steinberg representations} and they are precisely the nonsupercuspidal discrete series representations of $\GL_{kr}(E)$ \cite[Theorem 9.3]{zelevinsky1980}. 
The usual Steinberg representation $\st_n$ of $\GL_n(E)$ is obtained as $\st(n,1)$.

\begin{prop}\label{prop-exist-reg-dist-ds-unitary}
Let $n\geq 2$ be an integer.
There exist infinitely many equivalence classes of non-supercuspidal discrete series representations $\tau$ of $\GL_n(E)$ that are not Galois invariant.
\end{prop}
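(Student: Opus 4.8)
The plan is to produce, for each $n\geq 2$, an infinite family of generalized Steinberg representations $\st(k,\rho)$ of $\GL_n(E)$ that fail to be Galois invariant. Since $\st(k,\rho)$ depends on a supercuspidal $\rho$ of $\GL_r(E)$ with $kr=n$, the first step is to fix a convenient factorization, say $r=1$ and $k=n$, so that $\rho$ is simply a unitary character $\chi$ of $E^\times$ and $\st(n,\chi)=\chi\otimes\st_n$ is the twist of the Steinberg representation by $\chi\circ\det$. It then suffices to exhibit infinitely many inequivalent unitary characters $\chi$ of $E^\times$ with $\chi\ncong{}^\sigma\chi$, i.e.\ $\chi\neq\chi\circ\sigma$, because $\st(n,\chi)\cong{}^\sigma\st(n,\chi)$ would force $\chi\circ\det\cong({}^\sigma\chi)\circ\det$ and hence $\chi={}^\sigma\chi$ (comparing central characters, or just restricting the isomorphism of one-dimensional twists).

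The second step is the group-theoretic heart: one must show there are infinitely many unitary characters $\chi$ of $E^\times$ not fixed by $\sigma$, with the $\st(n,\chi)$ pairwise inequivalent. Distinctness is automatic once the characters are pairwise distinct, since $\chi\otimes\st_n\cong\chi'\otimes\st_n$ implies $\chi=\chi'$ (again by central characters). For the existence of non-$\sigma$-invariant characters, I would argue that the $\sigma$-fixed unitary characters of $E^\times$ are exactly those of the form $\mu\circ N_{E/F}$ for $\mu$ a unitary character of $F^\times$ (the norm map has image of finite index, and a $\sigma$-invariant character is trivial on the kernel-of-norm-after-quotient); the group of such characters is ``small'' compared to the full unitary character group of $E^\times$. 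Concretely, $E^\times$ contains a copy of $\oE^\times$, whose unitary characters form an infinite (profinite) group, and the subgroup of those arising via $N_{E/F}$ has infinite index — e.g.\ because $\oE^\times/(\text{image of }\of^\times \text{ and }1+\text{higher terms})$ already produces infinitely many characters differing from their $\sigma$-conjugate. A clean way to see this: pick a uniformizer and a generator of the residue field, observe $\Gal(E/F)$ acts on the (infinite) character group $\widehat{\oE^\times}$, and note the fixed subgroup cannot be everything because $\sigma$ acts nontrivially on the tame quotient or on some higher unit group depending on whether $E/F$ is unramified or ramified; in either case the non-fixed characters are infinite in number.

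The third step is bookkeeping: record that each $\st(n,\chi)$ is a non-supercuspidal discrete series representation of $\GL_n(E)$ by Zelevinsky's classification (it is a proper generalized Steinberg, hence not supercuspidal, for $n\geq 2$), and that the family $\{\st(n,\chi):\chi\text{ ranges over the infinitely many non-}\sigma\text{-invariant unitary characters}\}$ consists of infinitely many inequivalence classes. This assembles into the statement.

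The main obstacle I anticipate is the second step — cleanly producing infinitely many unitary characters of $E^\times$ that are not Galois invariant and phrasing it uniformly for $E/F$ both ramified and unramified. The cleanest formulation is probably via local class field theory: $\sigma$-invariant characters of $E^\times$ correspond (after composing with the reciprocity map) to characters of $W_E$ fixed by $W_F$-conjugation, which are exactly those that factor through the transfer/norm to $W_F^{\mathrm{ab}}=F^\times$; since $[E^\times : N_{E/F}(E^\times)]=2$ while $E^\times$ has infinitely many unitary characters and $N_{E/F}(E^\times)$ has infinite index in... no — rather, the point is that the image of $\widehat{F^\times}\to\widehat{E^\times}$ via $N_{E/F}$ is a proper-infinite-index subgroup of the infinite group $\widehat{E^\times}$ (both are infinite, but the cokernel is infinite because $E^\times/N_{E/F}(F^\times)$ — note, not the norm of $E^\times$ — is infinite). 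Getting this index computation stated correctly is the one place where care is needed; everything else is routine once the characters are in hand.
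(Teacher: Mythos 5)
Your proposal follows essentially the same route as the paper's: the proposition is deduced from \Cref{cor-st-not-Gal-inv}, i.e., from producing infinitely many non-Galois-invariant unitary twists $\chi\st_n$ of the Steinberg representation (the paper adds, for composite $n$, further examples built from higher-rank supercuspidals via \Cref{prop-gen-st-unitary} and \Cref{thm-sc-unitary}, but the Steinberg twists already cover every $n\geq 2$). One correction: your justification ``by central characters'' for the implications $\chi\otimes\st_n\cong\chi'\otimes\st_n\Rightarrow\chi=\chi'$ and $\st(n,\chi)\cong\st(n,{}^\sigma\chi)\Rightarrow\chi={}^\sigma\chi$ does not work as stated, since the central character of $\chi\st_n$ is $\chi^n$ and you only obtain $\chi^n=\chi'^n$; the correct input is Zelevinsky's uniqueness statement for (generalized) Steinberg representations, \cite[Theorem 9.7(b)]{zelevinsky1980}, which is what the paper cites. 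Your second step is right in substance but can be phrased much more cleanly, as the paper does: $\chi={}^\sigma\chi$ if and only if $\chi$ is trivial on $\ker N_{E/F}$ (Hilbert 90), and since $\ker N_{E/F}$ is a nontrivial closed (infinite compact) subgroup of $E^\times$, any nontrivial unitary character of it extends in infinitely many ways to $E^\times$, each extension being non-Galois-invariant; this avoids the case analysis on ramification and the index computations you were worried about.
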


Before giving a proof of \Cref{prop-exist-reg-dist-ds-unitary}, we note the following results.

\begin{cor}\label{cor-infinitely-many}
Let $n\geq 4$ be an integer. There exist infinitely many equivalence classes of RDS representations of the form constructed in \Cref{thm-unitary-rds} and such that  the discrete series $\tau$ is not supercuspidal.
\end{cor}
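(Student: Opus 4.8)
The plan is to combine \Cref{thm-unitary-rds} with \Cref{prop-exist-reg-dist-ds-unitary}: the latter supplies the inducing data and the former turns each choice into a relative discrete series representation. Concretely, \Cref{prop-exist-reg-dist-ds-unitary} provides a sequence $\tau_1', \tau_2', \tau_3', \ldots$ of pairwise inequivalent non-supercuspidal discrete series representations of $\GL_n(E)$, none of which is Galois invariant. Since $\tau' \mapsto {}^\sigma\tau'$ is an involution on equivalence classes of representations, it partitions this sequence into subsets of cardinality at most two; discarding one member of each such pair, we pass to an infinite subsequence---still written $\{\tau_k'\}_{k\geq 1}$---with the stronger property that the representations $\tau_1', {}^\sigma\tau_1', \tau_2', {}^\sigma\tau_2', \ldots$ are pairwise inequivalent. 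In particular $\tau_k' \ncong {}^\sigma\tau_k'$ for every $k$, so $\tau_k := \tau_k' \otimes {}^\sigma\tau_k'$ is a regular, non-supercuspidal discrete series representation of $L = M_{(n,n)}$.

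Next I would set $\pi_k = \iota_Q^G \tau_k$. By \Cref{thm-unitary-rds}, each $\pi_k$ is a relative discrete series representation for $H \backslash G$ that does not occur in the discrete series of $G$, and it is visibly of the form produced by that theorem, with non-supercuspidal inducing datum $\tau_k$. The only remaining point is that the $\pi_k$ are pairwise inequivalent. As recalled in the proof of \Cref{thm-unitary-rds}, each $\pi_k$ is irreducible (by the theorem of Bruhat, since $\tau_k$ is unitary and regular), and by Zelevinsky's classification \cite{zelevinsky1980} the cuspidal support of $\pi_k$ is an invariant of its equivalence class. An isomorphism $\pi_k \cong \pi_j$ would therefore force the inducing representations along $Q$ to be associate; since the nontrivial element of $N_G(L)/L$ interchanges the two $\GL_n(E)$-factors of $L$, this means precisely that $\{\tau_k', {}^\sigma\tau_k'\} = \{\tau_j', {}^\sigma\tau_j'\}$ as unordered pairs, whence $k = j$ by our choice of subsequence. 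This produces infinitely many equivalence classes of RDS of the desired form.

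There is no serious obstacle here: the content of the corollary is carried entirely by \Cref{thm-unitary-rds} and \Cref{prop-exist-reg-dist-ds-unitary}, and the only thing that requires care is the elementary bookkeeping that distinct (up to association) inducing data give distinct irreducible induced representations. If one prefers an argument that does not explicitly invoke association classes of Levi data, one can instead observe that the Langlands data of $\pi_k$ recover the multiset of Zelevinsky segments underlying $\tau_k'$ and ${}^\sigma\tau_k'$, which since $\tau_k'$ and ${}^\sigma\tau_k'$ are square integrable is equivalent to the unordered pair $\{\tau_k', {}^\sigma\tau_k'\}$ up to equivalence; infinitely many such pairs occur among the $\tau_k'$, hence infinitely many classes $\pi_k$.
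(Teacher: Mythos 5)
Your argument is correct and follows the same route as the paper, which simply cites \Cref{prop-exist-reg-dist-ds-unitary} together with Zelevinsky's classification \cite[Theorem 9.7(b)]{zelevinsky1980} to get both the supply of non-Galois-invariant, non-supercuspidal discrete series $\tau'$ and the pairwise inequivalence of the induced representations; your segment/cuspidal-support argument is exactly the content of that citation. The one thing you add beyond the paper is the explicit bookkeeping that distinct $\tau'_k$ might yield the same unordered pair $\{\tau'_k,{}^\sigma\tau'_k\}$, handled by passing to a subsequence --- a detail the paper leaves implicit but which is worth spelling out.
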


\begin{proof}
Apply \Cref{prop-exist-reg-dist-ds-unitary} and \cite[Theorem 9.7(b)]{zelevinsky1980}.
\end{proof}

\begin{prop}\label{prop-gen-st-unitary}
Let $\rho$ be an irreducible supercuspidal representation of $\GL_r(E)$, $r\geq 1$.
For $k\geq 2$, the generalized Steinberg representation $\st(k,\rho)$ of $\GL_{kr}(E)$ is Galois invariant if and only if $\rho$ is Galois invariant.
\end{prop}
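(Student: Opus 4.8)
The plan is to exploit that the Galois involution $\sigma$ is compatible with normalized parabolic induction and with unramified twisting. First I would record two elementary facts. Since $\sigma$ acts entry-wise on $\GL_m(E)$ it is a (continuous) group automorphism that fixes every standard parabolic subgroup $P_{(\underline{m})}$ together with its standard Levi and unipotent radical, and it preserves the modular character $\delta_{P_{(\underline{m})}}$ (which is built from the absolute value on $E$, and $\sigma$ is an isometry of $E$); moreover $\nu\circ\sigma=\nu$ on $\GL_r(E)$, again because $|\sigma(x)|_E=|x|_E$ for all $x\in E^\times$. Consequently, for any smooth representation $\pi$ of a standard Levi subgroup $M_{(\underline{m})}$ of $\GL_m(E)$ there is a natural isomorphism ${}^\sigma\bigl(\iota_{P_{(\underline{m})}}^{\GL_m(E)}\pi\bigr)\cong\iota_{P_{(\underline{m})}}^{\GL_m(E)}({}^\sigma\pi)$; and for representations of the block-diagonal Levi $M_{(r,\ldots,r)}$ one has ${}^\sigma(\pi_1\otimes\cdots\otimes\pi_k)\cong{}^\sigma\pi_1\otimes\cdots\otimes{}^\sigma\pi_k$ and ${}^\sigma(\nu^a\rho)\cong\nu^a({}^\sigma\rho)$ for all $a\in\R$.

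Applying ${}^\sigma$ to the induced representation
\[
\iota_{P_{(r,\ldots,r)}}^{\GL_{kr}(E)}\Bigl(\nu^{\frac{1-k}{2}}\rho\otimes\nu^{\frac{3-k}{2}}\rho\otimes\cdots\otimes\nu^{\frac{k-1}{2}}\rho\Bigr),
\]
whose unique irreducible quotient is $\st(k,\rho)$, the facts above identify the result with the analogous induced representation in which $\rho$ is replaced throughout by ${}^\sigma\rho$. Since ${}^\sigma$ is an auto-equivalence of the category of smooth representations of $\GL_{kr}(E)$, it carries unique irreducible quotients to unique irreducible quotients, so ${}^\sigma\st(k,\rho)\cong\st(k,{}^\sigma\rho)$. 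In particular, if $\rho$ is Galois invariant then $\st(k,\rho)\cong\st(k,{}^\sigma\rho)\cong{}^\sigma\st(k,\rho)$, which settles the ``if'' direction.

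For the converse I would invoke Zelevinsky's classification of the discrete series of general linear groups \cite[Theorem 9.3]{zelevinsky1980}: the non-supercuspidal discrete series of $\GL_{kr}(E)$ are parametrized without repetition by the pairs $(k',\rho')$, $k'\geq 2$, via $(k',\rho')\mapsto\st(k',\rho')$. Equivalently, the supercuspidal support of $\st(k,\rho)$ is the linked segment $\{\nu^{j}\rho: j=\tfrac{1-k}{2},\tfrac{3-k}{2},\ldots,\tfrac{k-1}{2}\}$, an invariant of the isomorphism class from which $\rho$ is recovered. Hence, if $\st(k,\rho)$ is Galois invariant, then $\st(k,\rho)\cong{}^\sigma\st(k,\rho)\cong\st(k,{}^\sigma\rho)$ by the previous paragraph, and injectivity of $\rho\mapsto\st(k,\rho)$ forces $\rho\cong{}^\sigma\rho$, as desired. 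I expect the argument to be essentially formal; the only point deserving real care is the compatibility asserted in the first paragraph — that $\sigma$, viewed as an abstract automorphism of $\GL_{kr}(E)$ fixing the standard block-diagonal parabolic, genuinely intertwines normalized parabolic induction with itself (including the modular-character normalization) — which legitimizes the identification ${}^\sigma\st(k,\rho)\cong\st(k,{}^\sigma\rho)$.
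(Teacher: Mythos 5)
Your proposal is correct and follows essentially the same route as the paper: establish ${}^\sigma\st(k,\rho)\cong\st(k,{}^\sigma\rho)$ from the compatibility of $\sigma$ with (normalized) parabolic induction, and then conclude via the injectivity of Zelevinsky's parametrization of discrete series (the paper cites \cite[Theorem 9.7(b)]{zelevinsky1980} for this last step, where you appeal to \cite[Theorem 9.3]{zelevinsky1980} and the supercuspidal support, but the content is the same). The extra care you take in justifying the first observation is sound but is simply recorded as an observation in the paper.
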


\begin{proof}
First, observe that the twisted representation ${}^\sigma \st(k,\rho)$ is equivalent to the generalized Steinberg representation $\st(k,{}^\sigma\rho)$. 
It follows that $\st(k,\rho)$ is Galois invariant if and only if $\st(k,\rho) \cong {}^\sigma \st(k,\rho) \cong \st(k,{}^\sigma\rho)$.
The result now follows from \cite[Theorem 9.7(b)]{zelevinsky1980}, which gives us that $\st(k,\rho) \cong \st(k,{}^\sigma\rho)$ if and only if $\rho \cong {}^\sigma \rho$.
\end{proof}

\begin{prop}\label{cor-st-not-Gal-inv}
For $n\geq 2$, there exist infinitely many unitary twists of the  Steinberg representation $\st_n$ of $\GL_n(E)$ that are not Galois invariant.
\end{prop}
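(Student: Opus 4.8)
The plan is to reduce the statement to the existence of infinitely many unitary characters of $E^\times$ that are not Galois invariant, and then to produce such characters using Hilbert's Theorem~90.

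First I would record that every unitary twist of $\st_n$ is of the form $\st_n\otimes(\chi\circ\det)$ for some unitary character $\chi$ of $E^\times$, since every character of $\GL_n(E)$ factors through $\det\colon\GL_n(E)\to E^\times$. Unwinding the definition of the generalized Steinberg representation, twisting the inducing data $\nu^{(1-n)/2}\otimes\cdots\otimes\nu^{(n-1)/2}$ by $\chi\circ\det$ produces $\nu^{(1-n)/2}\chi\otimes\cdots\otimes\nu^{(n-1)/2}\chi$, so $\st_n\otimes(\chi\circ\det)\cong\st(n,\chi)$, where $\chi$ is viewed as a (trivially supercuspidal) representation of $\GL_1(E)=E^\times$. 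Distinct unitary characters $\chi_1\neq\chi_2$ give inequivalent twists, since the cuspidal support of $\st(n,\chi_i)$ is the Zelevinsky segment $\{\nu^{(1-n)/2}\chi_i,\dots,\nu^{(n-1)/2}\chi_i\}$, and matching the member of least $\lvert\det\rvert$-exponent (using that $\chi_1,\chi_2$ are unitary) forces $\chi_1=\chi_2$. By \Cref{prop-gen-st-unitary}, applied with $k=n\geq 2$ and $\rho=\chi$, the twist $\st(n,\chi)$ is Galois invariant if and only if $\chi\cong{}^\sigma\chi$. Hence it suffices to exhibit infinitely many unitary characters $\chi$ of $E^\times$ with $\chi\ncong{}^\sigma\chi$.

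To do this I would first identify the Galois-invariant unitary characters. For a character $\chi$ of $E^\times$ we have ${}^\sigma\chi=\chi\circ\sigma$, so $\chi={}^\sigma\chi$ precisely when $\chi$ is trivial on $\{x\,\sigma(x)^{-1}:x\in E^\times\}$; by Hilbert's Theorem~90 for the cyclic extension $E/F$, this subgroup is the norm-one subgroup $E^{1}:=\ker\!\big(N_{E/F}\colon E^\times\to F^\times\big)$. Thus the Galois-invariant unitary characters of $E^\times$ are exactly those in the annihilator of $E^{1}$ inside the Pontryagin dual $\widehat{E^\times}$, a closed subgroup isomorphic to $\widehat{E^\times/E^{1}}$.

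Finally, $E^{1}$ is nontrivial --- it contains $-1$, and indeed it is infinite, being the group of $F$-points of the one-dimensional norm-one torus of $E/F$. Therefore the annihilator of $E^{1}$ is a \emph{proper} closed subgroup of $\widehat{E^\times}$, and since $\widehat{E^\times}$ is infinite its complement is infinite as well. Each $\chi$ in this complement is a unitary character of $E^\times$ with $\chi\ncong{}^\sigma\chi$; by the first step the associated twists $\st(n,\chi)$ of $\st_n$ are pairwise inequivalent and none of them is Galois invariant. There is no serious obstacle in this argument: once \Cref{prop-gen-st-unitary} is available the proposition is essentially formal, and the only step needing a little care is the bookkeeping of the first paragraph, namely the identification $\st_n\otimes(\chi\circ\det)\cong\st(n,\chi)$ and the injectivity of $\chi\mapsto\st(n,\chi)$ on unitary characters.
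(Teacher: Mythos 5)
Your proposal is correct and follows essentially the same route as the paper: reduce via \Cref{prop-gen-st-unitary} (equivalently, Zelevinsky's Theorem 9.7(b)) to producing infinitely many unitary characters $\chi$ of $E^\times$ with ${}^\sigma\chi\neq\chi$, identify the Galois-invariant characters as those trivial on $\ker N_{E/F}$ via Hilbert~90, and conclude from the fact that this kernel is a nontrivial closed subgroup. Your extra care about the injectivity of $\chi\mapsto\st(n,\chi)$ on unitary characters is a welcome addition but does not change the argument.
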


\begin{proof}
Let $\chi:E^\times \rightarrow \C^\times$ be a (unitary) character of $E^\times$.
By \cite[Theorem 9.7(b)]{zelevinsky1980}, $\chi\st_n \cong {}^\sigma (\chi \st_n)$ if and only if $\chi = {}^\sigma \chi$.
We have that ${}^\sigma \chi = \chi$ if and only if $\chi$ is trivial on the kernel of the norm map $N_{E/F}: E^\times \rightarrow F^\times$.
Note that $\ker N_{E/F}$ is a non-trivial closed subgroup of $E^\times$.
We can extend any non-trivial unitary character of $\ker N_{E/F}$ to $E^\times$ to obtain a unitary character $\chi$ of $E^\times$ such that ${}^\sigma \chi \neq \chi$.
Given a unitary character of $\ker N_{E/F}$ there are infinitely many distinct extensions to $E^\times$.
\end{proof}

The following is the main ingredient needed to prove \Cref{prop-exist-reg-dist-ds-unitary}.

\begin{thm}[Hakim--Murnaghan]\label{thm-sc-unitary}
For $n\geq 1$, there exist infinitely many distinct equivalence classes of Galois invariant (respectively, non-Galois invariant) irreducible supercuspidal representations of $\GL_n(E)$.
\end{thm}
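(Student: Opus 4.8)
The plan is to produce the two families separately, reducing everything to two standard inputs: that $\GL_n$ over any nonarchimedean local field has infinitely many pairwise inequivalent supercuspidal representations, and the known behaviour of quadratic base change and of character twisting on the supercuspidal spectrum of $\GL_n$. Write $\eta = \eta_{E/F}$ for the quadratic character of $F^\times$ cut out by $E/F$ via local class field theory.

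For the Galois-invariant family I would use cyclic base change $\mathrm{BC}_{E/F}$ from $\GL_n(F)$ to $\GL_n(E)$. The condition $\rho \cong \rho\otimes\eta$ on a supercuspidal $\rho$ of $\GL_n(F)$ is proper: it is vacuous when $n$ is odd, and when $n$ is even it singles out the automorphic inductions $\mathrm{AI}_{E/F}(\tau)$ of non-Galois-invariant supercuspidals $\tau$ of $\GL_{n/2}(E)$, which do not exhaust the supercuspidal spectrum of $\GL_n(F)$. Since moreover $\rho\otimes\chi \cong \rho\otimes\chi\otimes\eta$ if and only if $\rho \cong \rho\otimes\eta$, the complementary set $\mathcal R = \{\rho : \rho\ncong\rho\otimes\eta\}$ is a nonempty union of infinite twisting orbits, hence infinite. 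For $\rho\in\mathcal R$, the base change $\mathrm{BC}_{E/F}(\rho)$ is an irreducible supercuspidal representation of $\GL_n(E)$ — the hypothesis $\rho\ncong\rho\otimes\eta$ is exactly what prevents the base change from reducing — and it is Galois invariant by construction, $\mathrm{BC}_{E/F}(\rho)\cong{}^\sigma\mathrm{BC}_{E/F}(\rho)$. Because $\mathrm{BC}_{E/F}(\rho)\cong\mathrm{BC}_{E/F}(\rho')$ forces $\rho'\in\{\rho,\rho\otimes\eta\}$, the base change map is two-to-one on $\mathcal R$, so its image is an infinite set of Galois-invariant supercuspidal representations of $\GL_n(E)$; compare \cite{Arthur--Clozel-book}.

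For the non-Galois-invariant family I would fix one Galois-invariant supercuspidal $\pi_0$ of $\GL_n(E)$ — say one of those just produced — and twist by unitary characters $\chi$ of $E^\times$. Twisting preserves supercuspidality, and $\pi_0\otimes\chi$ is Galois invariant precisely when ${}^\sigma\chi\cdot\chi^{-1}$ lies in the twisting stabiliser $S = \{\xi : \pi_0\otimes\xi\cong\pi_0\}$; comparing central characters on $Z_G$ gives $\xi^n = 1$ for $\xi\in S$, so $S$ is finite. The homomorphism $\chi\mapsto{}^\sigma\chi\cdot\chi^{-1}$ has kernel $K = \{\chi : {}^\sigma\chi=\chi\}$, the characters of $E^\times$ inflated from $F^\times$ along the norm $N_{E/F}$, and $\widehat{E^\times}/K$ is isomorphic to the Pontryagin dual of $\ker N_{E/F}$, which is infinite; hence all but finitely many cosets of $K$ are sent outside $S$. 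A $\chi$ in such a coset (and outside $K$, which all but one of them satisfies) has ${}^\sigma\chi\neq\chi$ and ${}^\sigma\chi\cdot\chi^{-1}\notin S$, so $\pi_0\otimes\chi$ is a non-Galois-invariant supercuspidal representation of $\GL_n(E)$; since $\pi_0\otimes\chi\cong\pi_0\otimes\chi'$ only when $\chi\chi'^{-1}\in S$, a pigeonhole argument over the infinitely many available cosets produces infinitely many pairwise inequivalent such representations. This is the same sort of counting used in the proof of \Cref{cor-st-not-Gal-inv}, and it already handles the case $n=1$.

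The only steps that are not bookkeeping with central characters and Pontryagin duality are the two properties of quadratic base change invoked above — that $\mathrm{BC}_{E/F}(\rho)$ is supercuspidal exactly when $\rho\ncong\rho\otimes\eta$, and that its fibres have size at most two — which belong to the theory of cyclic base change for $\GL_n$ rather than to anything developed in this paper, and locating the right citations for these is the point I would expect to need the most care. An alternative that avoids base change altogether is to run the same fixed-point versus non-fixed-point dichotomy inside the explicit construction of supercuspidals from admissible characters of degree-$n$ extensions of $E$ (in the style of Howe and of Bushnell--Kutzko), where $\sigma$ acts explicitly on the set of admissible characters and one simply exhibits infinitely many $\sigma$-fixed and infinitely many non-$\sigma$-fixed admissible characters; this is presumably the route taken by Hakim and Murnaghan.
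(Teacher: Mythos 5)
Your argument is correct, but it is not the route the paper takes. The paper's proof is essentially a two-line citation: the case $n=1$ is handled by the norm-kernel character count from \Cref{cor-st-not-Gal-inv}, and for $n\geq 2$ it invokes Murnaghan's construction of infinitely many Galois-invariant supercuspidals together with Hakim--Murnaghan's Theorem 1.1, which reads off Galois invariance from the admissible-character data in the tame construction of supercuspidals --- i.e., precisely the ``fixed-point versus non-fixed-point dichotomy on admissible characters'' that you correctly guess in your closing paragraph. Your main argument instead produces the Galois-invariant family as the image under quadratic base change $\operatorname{bc}$ of the supercuspidals $\rho$ of $\GL_n(F)$ with $\rho\ncong\rho\otimes\eta_{E/F}$ (using that $\operatorname{bc}$ is then supercuspidal-valued with fibres of size at most two), and the non-invariant family by twisting one fixed Galois-invariant supercuspidal $\pi_0$ by characters $\chi$ of $E^\times$ with ${}^\sigma\chi\cdot\chi^{-1}$ outside the finite twisting stabiliser of $\pi_0$. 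Both halves are sound; the twisting half is exactly the \Cref{cor-st-not-Gal-inv} argument run relative to $\pi_0$ and uniformly covers $n=1$. What your approach buys is independence from the tame/admissible-character machinery, at the price of importing the two base-change facts from Arthur--Clozel (which the paper already cites in \Cref{sec-exhaustion}, so this is a reasonable trade). The one step you should not leave implicit is the nonemptiness of $\mathcal R$ when $n$ is even: the assertion that the automorphic inductions $\mathrm{AI}_{E/F}(\tau)$ do not exhaust the supercuspidal spectrum of $\GL_n(F)$ is true (e.g.\ via the local Langlands correspondence, an irreducible parameter induced from a degree-$n$ extension of $F$ not containing $E$ is not induced from $W_E$), but it is not weaker than what it is being used to prove, so it needs its own justification or citation rather than a passing clause.
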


\begin{proof}
If $n=1$, argue as in the proof of \Cref{cor-st-not-Gal-inv}.
For $n\geq 2$, use \cite[Proposition 10.1]{murnaghan2011a} to obtain the existence of infinitely many pairwise inequivalent Galois invariant irreducible supercuspidal representations of $\GL_n(E)$.
To complete the proof, apply \cite[Theorem 1.1]{Hakim--Murnaghan2002}.
\end{proof}

\begin{proof}[Proof of \Cref{prop-exist-reg-dist-ds-unitary}]
 If $n$ is prime, then by \Cref{cor-st-not-Gal-inv} there exist infinitely many twists of the Steinberg representation $\st_n$ of $\GL_n(E)$ that are not Galois invariant. 
If $n$ is composite, then \Cref{cor-st-not-Gal-inv} still applies; however, by \Cref{prop-gen-st-unitary} and \Cref{thm-sc-unitary} there are infinitely many classes of non-Galois invariant generalized Steinberg representations of $\GL_n(E)$.
\end{proof}

\begin{rmk}
A representation $(\pi,V)$ is $H$-relatively supercuspidal if and only if the $\lambda$-relative matrix coefficients of $\pi$ are compactly supported modulo $Z_GH$, for every nonzero $\lambda \in \Hom_H(\pi,1)$.
If further assume that $\tau'$ is supercuspidal in \Cref{thm-unitary-rds}, then $\pi = \iota_Q^G\tau$ is a non-supercuspidal $H$-relatively supercuspidal representation of $G$ (\textit{cf.}~\cite[Corollary 6.7]{smith2018}).  A proof of this result can be obtained by a slight modification to proof of \Cref{thm-unitary-rds}.
Indeed, when $P\cap{}^yL$ is proper in ${}^yL$, the subquotients $\mathcal F^y_N(\tau)$ of the Jacquet module vanish since $\tau$ is supercuspidal.  
Otherwise, $\mathcal F^y_N(\tau)$ cannot be $M^\theta$-distinguished.
By  \Cref{non-dist-gen-eig-sp}, $\lambda_N = 0$, for every proper $\theta$-split parabolic subgroup $P$ of $G$ and any $\lambda \in \Hom_H(\pi,1)$.
Finally, by a result of Kato and Takano \cite[Theorem 6.2]{kato--takano2008}, $\pi$ is relatively supercuspidal. Moreover, since $\pi$ is parabolically induced, $\pi$ is not supercuspidal.
This modification of \Cref{thm-unitary-rds} can be obtained by more direct methods; see, for instance, the work of Murnaghan \cite{murnaghan2017a}.
\end{rmk}

\subsection{Exhaustion of the discrete spectrum}\label{sec-exhaustion}
In his 2017 Cours Peccot, Rapha\"el Beuzart--Plessis announced Plancherel formulas for the two $p$-adic symmetric spaces $\GL_n(F) \backslash \GL_n(E)$ and ${\bf U}_{n,E/F}(F) \backslash \GL_n(E)$, where ${\bf U}_{n,E/F}(F)$ is a quasi-split unitary group \cite{beuzart-plessis--CP}.  The two Plancherel formulas are realized in terms of the appropriate base change maps.
Both results are obtained by a comparison of local relative trace formulas.
Presently, we are concerned with the second case and only when $n$ is even.

As above, let $G=\GL_{2n}(E)$ and $H = {\bf U}_{E/F,w_\ell}(F)$, where $n\geq 2$.  Building on the work of Jacquet \cite{jacquet2001} and Feigon--Lapid--Offen \cite{feigon--lapid--offen2012}, Beuzart-Plessis has shown that the Plancherel formula for $H\backslash G$ is the push-forward of the Whittaker--Plancherel formula for $\GL_{2n}(F)$ via quadratic base change.
As a consequence, the discrete spectrum of $H\backslash G$ consists of the quadratic base changes of the discrete series of $\GL_{2n}(F)$.

Let $\irr(\GL_n(E))$ denote the set of equivalence classes of irreducible admissible representations of $\GL_n(E)$ (likewise for $\GL_n(F)$), and let $\irr^\sigma(\GL_n(E))$ denote the subset $ \{ \pi \in \irr(\GL_n(E)) : \pi \cong {}^\sigma \pi \}$  of Galois invariant representations.
Quadratic base change $\operatorname{bc}:\irr(\GL_n(F)) \rightarrow \irr^\sigma(\GL_n(E))$ maps (classes of) irreducible representations of $\GL_n(F)$ to (classes of) irreducible Galois invariant representations of $\GL_n(E)$.
Moreover, the map $\operatorname{bc}:\irr(\GL_n(F)) \rightarrow \irr^\sigma(\GL_n(E))$ is surjective.
Let $\eta_{E/F}:F^\times \rightarrow \C^\times$ be the quadratic character associated to the extension $E/F$ by local class field theory.
For any $\pi' \in \irr(\GL_n(F))$, we have that $\operatorname{bc}(\pi') = \operatorname{bc}(\pi'\otimes\eta_{E/F})$.
We refer the reader to \cite[Chapter 1, Section 6]{Arthur--Clozel-book} for more information about quadratic base change and its properties; in particular, \cite[Chapter 1, Theorem 6.2]{Arthur--Clozel-book} summarizes the basic properties of base change for tempered representations.

In the language of \cite{feigon--lapid--offen2012}, the RDS representations $\pi = \iota_Q^G(\tau' \otimes {}^\sigma \tau')$, with $\tau'\ncong{}^\sigma\tau'$, constructed in \Cref{thm-unitary-rds} are \textit{totally $\sigma$-isotropic}, that is, the cuspidal support of $\pi$ is a tensor product of non-Galois invariant supercuspidal representations (see \Cref{prop-gen-st-unitary}).
Moreover, this means that $\pi$ is the base change of a unique discrete series representation $\pi' \cong \pi'\otimes \eta_{E/F}$ of $\GL_{2n}(F)$, and $\pi$ is not distinguished by the non-quasi-split unitary group \cite[Theorem 0.2, Lemma 3.4]{feigon--lapid--offen2012}.

The following theorem frames the work of Beuzart-Plessis in terms of \Cref{thm-unitary-rds} and gives a complete description of $L^2_{\operatorname{disc}}(H\backslash G)$.

\begin{thm}\label{thm-exhaustion}
Let $\pi$ be a relative discrete series representation for the quotient ${\bf U}_{E/F}(F)\backslash\GL_{2n}(E)$.  Then $\pi$ is either an ${\bf U}_{E/F}(F)$-distinguished discrete series representation of $\GL_{2n}(E)$, or $\pi$ is equivalent to a representation of the form constructed in \Cref{thm-unitary-rds}.
\end{thm}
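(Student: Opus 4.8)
The plan is to read the statement off the Plancherel decomposition of $H\backslash G$ due to Beuzart-Plessis, combined with the Zelevinsky classification of the discrete series of $\GL_{2n}(F)$ and the explicit behaviour of quadratic base change on those representations. Suppose $\pi$ is a relative discrete series representation for $H\backslash G$. As discussed above, $L^2_{\operatorname{disc}}(H\backslash G)$ is the push-forward under quadratic base change of the discrete spectrum of $\GL_{2n}(F)$; hence there is a unitary discrete series representation $\pi'$ of $\GL_{2n}(F)$ with $\pi\cong\operatorname{bc}(\pi')$. By \cite{zelevinsky1980} we may write $\pi'\cong\st(a,\rho')$ for an integer $a\geq 1$ and an irreducible unitary supercuspidal representation $\rho'$ of $\GL_b(F)$ with $ab=2n$ (the case $a=1$ being $\pi'$ supercuspidal). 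The argument then splits according to whether or not $\rho'$ is isomorphic to its twist $\rho'\otimes\eta_{E/F}$ by the quadratic character attached to $E/F$.

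Suppose first that $\rho'\ncong\rho'\otimes\eta_{E/F}$. Then the fibre of base change through $\rho'$ is a singleton, so $\rho:=\operatorname{bc}(\rho')$ is an irreducible (necessarily Galois-invariant) supercuspidal representation of $\GL_b(E)$; see \cite[Chapter 1, \S6]{Arthur--Clozel-book}. Since base change for $\GL_m$ is restriction of Langlands parameters from the Weil--Deligne group of $F$ to that of $E$, and the parameter of $\st(a,\rho')$ is $\phi_{\rho'}\otimes\operatorname{Sp}(a)$, we obtain $\pi\cong\operatorname{bc}(\st(a,\rho'))\cong\st(a,\rho)$, a discrete series representation of $\GL_{2n}(E)$; it is Galois invariant because $\rho\cong{}^\sigma\rho$ (\textit{cf.}~\Cref{prop-gen-st-unitary}). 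Taking $x=w_\ell$ (a Hermitian matrix) in \Cref{FLO-cor-13-5}, a Galois-invariant discrete series of $\GL_{2n}(E)$ is $H$-distinguished; thus $\pi$ is an $\mathbf U_{E/F}(F)$-distinguished discrete series representation of $\GL_{2n}(E)$, which is the first alternative.

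Now suppose $\rho'\cong\rho'\otimes\eta_{E/F}$. Then $b$ is even and $\rho'$ is the automorphic induction $\operatorname{AI}_{E/F}(\rho_0)$ of an irreducible supercuspidal representation $\rho_0$ of $\GL_{b/2}(E)$; moreover $\rho_0\ncong{}^\sigma\rho_0$, since otherwise $\operatorname{AI}_{E/F}(\rho_0)$ would fail to be supercuspidal. Restricting the parameter of $\st(a,\rho')$ to the Weil--Deligne group of $E$ and using $\phi_{\rho'}|_{W_E}\cong\phi_{\rho_0}\oplus{}^\sigma\phi_{\rho_0}$, we find that $\operatorname{bc}(\st(a,\rho'))\cong\st(a,\rho_0)\times{}^\sigma\st(a,\rho_0)$; since $\rho_0$ is unitary and $\rho_0\ncong{}^\sigma\rho_0$, the two factors are inequivalent discrete series of $\GL_n(E)$ lying on disjoint supercuspidal lines, so this parabolically induced representation is irreducible (\textit{cf.}~\cite{bruhat1961}). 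Setting $\tau'=\st(a,\rho_0)$ --- a discrete series representation of $\GL_n(E)$, as $a(b/2)=n$ --- and noting $\tau'\ncong{}^\sigma\tau'$ (\Cref{prop-gen-st-unitary}), we obtain $\pi\cong\iota_Q^G(\tau'\otimes{}^\sigma\tau')$ with $Q=P_{(n,n)}$; this is precisely a representation of the type produced in \Cref{thm-unitary-rds}, which is the second alternative.

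The two alternatives are mutually exclusive, since a representation of the second type is parabolically induced from a proper parabolic and hence is not a discrete series of $\GL_{2n}(E)$; and, conversely, representations of either type are relative discrete series --- by Kato--Takano \cite{kato--takano2010} in the first case and by \Cref{thm-unitary-rds} in the second --- so together they comprise precisely $L^2_{\operatorname{disc}}(H\backslash G)$. The only genuinely hard input is the Plancherel/base-change identification of $L^2_{\operatorname{disc}}(H\backslash G)$ with $\operatorname{bc}$ of the discrete spectrum of $\GL_{2n}(F)$, which we take as given from \cite{beuzart-plessis--CP}; the remaining work is the bookkeeping above, and the one step that requires care is the compatibility of base change with the Zelevinsky classification --- in particular the explicit determination of $\operatorname{bc}(\st(a,\rho'))$ in the two cases --- which is exactly where the split between the two families of relative discrete series comes from.
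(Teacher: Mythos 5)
Your proposal is correct and follows essentially the same route as the paper: both arguments take Beuzart-Plessis's identification of $L^2_{\operatorname{disc}}(H\backslash G)$ with the base change of the discrete spectrum of $\GL_{2n}(F)$ as the key input and then split on whether $\pi'\cong\pi'\otimes\eta_{E/F}$, the paper citing \cite[Proposition 6.6]{Arthur--Clozel-book} and \cite[Section 3.2, Lemma 3.4]{feigon--lapid--offen2012} where you carry out the parameter/Zelevinsky-classification computation of $\operatorname{bc}(\st(a,\rho'))$ explicitly. The extra detail you supply is a correct unpacking of those citations rather than a different argument.
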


\begin{proof}
Beuzart-Plessis has shown that the relative discrete series representations for $H\backslash G$ are precisely the images of the discrete series of $\GL_{2n}(F)$ under quadratic base change \cite{beuzart-plessis--CP}.
Let $\pi' \in \irr(\GL_{2n}(F))$ be a discrete series representation of $\GL_{2n}(F)$.
By \cite[Proposition 6.6]{Arthur--Clozel-book}, $\pi = \operatorname{bc}(\pi') \in \irr^\sigma(G)$ is a discrete series representation of $G$ if and only if $\pi' \ncong \pi'\otimes \eta_{E/F}$.
In this case, $\pi = \operatorname{bc}(\pi')$ is an $H$-distinguished discrete series representation of $G$ \cite[Corollary 13.5]{feigon--lapid--offen2012} and $\pi$ is known to be relatively discrete \cite[Proposition 4.10]{kato--takano2010}.
Otherwise, it must be the case that $\pi' \cong \pi'\otimes \eta_{E/F}$.
If $\pi' \in \irr(\GL_{2n}(F))$ is a discrete series representation such that $\pi' \cong \pi'\otimes \eta_{E/F}$, 
then there exists a (non-unique) discrete series representation $\tau' \in \irr(\GL_n(E))$ such that $\tau' \ncong {}^\sigma \tau'$ and $\operatorname{bc}(\pi') = \iota_{P_{(n,n)}}^G(\tau' \otimes {}^\sigma \tau')$ is equivalent to a relative discrete series representation constructed in \Cref{thm-unitary-rds} (\textit{cf.}~\cite[Section 3.2, Lemma 3.4]{feigon--lapid--offen2012}).\footnote{In this case, the representation $\pi'$ is the \textit{automorphic induction} of the discrete series $\tau'$, see \cite[Section 3.2]{feigon--lapid--offen2012}.}
\end{proof}

\section{A technical lemma}\label{sec-technical}
Finally, we give two technical results required to prove \Cref{unitary-case2-exp-rel-cass}, which allows us to reduce the Relative Casselman's Criterion for $\pi = \iota_Q^G\tau$ to the usual Casselman's Criterion for $\tau$.  
The set $A_{M \cap {}^y L}^{- {}^yL} \setminus A_{M\cap {}^y L}^1A_{{}^y L}$ that appears in \Cref{technical-torus-nbhd-Unitary-general} is the dominant part of $A_{M\cap{}^yL}$ in $M\cap{}^yL$, and is precisely the cone on which we must consider the exponents of $\tau$ in order to apply Casselman's Criterion. 
\Cref{technical-torus-nbhd-Unitary} is the analogue of \cite[Lemma 8.4]{smith2018} and the proof is essentially the same (\textit{cf.}~\cite[Lemma 5.2.13]{smith-phd2017}).  
In the present setting, we must also consider non-$\Delta_0$-standard $\theta$-split parabolic subgroups in our analysis of the exponents of $\pi$.
In \Cref{technical-torus-nbhd-Unitary-general}, we will explain how to adapt \Cref{technical-torus-nbhd-Unitary} to handle the non-standard case.

In order to discuss Casselman's Criterion for the inducing data of $\pi=\iota_Q^G\tau$ we use the following notation.
If $\Theta_1 \subset \Theta_2 \subset \Delta_0$, then we define
\begin{align*}
A_{\Theta_1}^- = \{ a \in A_{\Theta_1} : |\alpha(a)|\leq 1,\ \text{for all} \ \alpha \in \Delta_0\setminus {\Theta_1}\}
\end{align*}
and
\begin{align*}
A_{\Theta_1}^{-\Theta_2} = \{ a \in A_{\Theta_1} :  |\beta(a)|\leq 1,\ \text{for all} \ \beta \in \Theta_2 \setminus {\Theta_1}\}.
\end{align*}
The set $A_{\Theta_1}^-$ is the dominant part of $A_{\Theta_1}$ in $G$, while $A_{\Theta_1}^{-\Theta_2}$ is the dominant part of $A_{\Theta_1}$ in $M_{\Theta_2}$.
%
%
\begin{lem}\label{technical-torus-nbhd-Unitary}
Let $P_\Theta$, given by $\Theta \subset \Delta_0$, be any maximal $\theta$-split $\Delta_0$-standard parabolic subgroup.  
Let $w\in[W_\Theta \backslash W_0 / W_\Omega]$ such that $M_{\Theta}\cap {}^w M_\Omega = M_{\Theta \cap w\Omega}$ is a proper Levi subgroup of ${}^w M_\Omega = M_{w\Omega}$. 
We have the containment
\begin{align}\label{eq-unitary-std-cone}
S_\Theta^- \setminus S_\Theta^1 S_{\Delta_0} \subset A_{\Theta \cap w\Omega}^{- w\Omega} \setminus A_{\Theta \cap w\Omega}^1A_{w\Omega}.
\end{align}
\end{lem}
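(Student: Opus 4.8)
The plan is to reduce everything to elementary linear algebra in the root space of $\GL_{2n}$, exploiting that in this symmetric space $\Phi_0$ \emph{is} the restricted root system and $\Phi_0^\theta=\emptyset$. First I would simplify both sides of \eqref{eq-unitary-std-cone}. Since $A_0$ is a maximal $F$-split torus that is $(\theta,F)$-split and $\Phi_0^\theta=\emptyset$, for every $\Delta_0$-standard Levi $M_{\Theta'}$ the $(\theta,F)$-split component $S_{\Theta'}$ coincides with the full $F$-split component $A_{\Theta'}$; in particular $S_\Theta=A_\Theta$, $S_\Theta^1=A_\Theta^1$, $S_{\Delta_0}=S_G=A_G$ (\Cref{unitary-theta-split-comp}), and $S_\Theta^-=S_\Theta^-(1)=A_\Theta^-$. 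So the containment to prove reads $A_\Theta^-\setminus A_\Theta^1A_G\subseteq A_{\Theta\cap w\Omega}^{-w\Omega}\setminus A_{\Theta\cap w\Omega}^1A_{w\Omega}$. I would also record that $\Theta\cap w\Omega$ sits inside both $\Theta$ and $w\Omega$, whence $A_\Theta\subseteq A_{\Theta\cap w\Omega}$ and $A_{w\Omega}\subseteq A_{\Theta\cap w\Omega}$, so all groups in sight lie in $A_{\Theta\cap w\Omega}$, and then fix once and for all an $a\in A_\Theta^-$ with $a\notin A_\Theta^1A_G$; the task is to show $a\in A_{\Theta\cap w\Omega}^{-w\Omega}$ and $a\notin A_{\Theta\cap w\Omega}^1A_{w\Omega}$.

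The combinatorial heart of the argument is the observation that $\Theta\cup w\Omega$ spans the whole root space $V:=\spn_\R\Phi_0$. Since $P_\Theta$ is maximal, $\Theta=\Delta_0\setminus\{\alpha_0\}$ for a unique $\alpha_0$, so $\spn_\R\Theta$ is a hyperplane in $V$. The hypothesis that $M_{\Theta\cap w\Omega}=M_\Theta\cap{}^wM_\Omega$ is a \emph{proper} Levi subgroup of $M_{w\Omega}={}^wM_\Omega$ forces ${}^wM_\Omega\not\subseteq M_\Theta$, hence $w\Phi_\Omega\not\subseteq\Phi_\Theta$; since $\Phi_\Theta=\Phi_0\cap\spn_\R\Theta$ is the subsystem generated by $\Theta$ and $w\Omega$ generates $w\Phi_\Omega$, there must be some $\beta_0\in w\Omega$ with $\beta_0\notin\Phi_\Theta$, i.e. $\beta_0\notin\spn_\R\Theta$. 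Adjoining $\beta_0$ to the hyperplane $\spn_\R\Theta$ recovers all of $V$, so $\spn_\R(\Theta\cup w\Omega)=V$.

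With this in hand the two verifications are routine. For $a\in A_{\Theta\cap w\Omega}^{-w\Omega}$: we have $a\in A_\Theta\subseteq A_{\Theta\cap w\Omega}$, and for $\beta\in w\Omega\setminus(\Theta\cap w\Omega)$ — necessarily a positive root, hence a nonnegative integral combination of $\Delta_0$ — the facts $|\alpha(a)|=1$ for $\alpha\in\Theta$ and $|\alpha(a)|\le 1$ for $\alpha\in\Delta_0\setminus\Theta$ give $|\beta(a)|\le 1$ term by term. For $a\notin A_{\Theta\cap w\Omega}^1A_{w\Omega}$: arguing by contradiction, write $a=bc$ with $b\in A_{\Theta\cap w\Omega}^1$ and $c\in A_{w\Omega}$. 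Then $|\chi(b)|=1$ for every $F$-rational character $\chi$, $|\beta(c)|=1$ for all $\beta\in w\Omega$ (roots of $M_{w\Omega}$ are trivial on its connected centre), and $|\alpha(a)|=1$ for $\alpha\in\Theta$ (since $a\in A_\Theta$); hence $|\chi(a)|=1$ for every $\chi$ in the subgroup of $X^*(A_0)$ generated by $\Theta\cup w\Omega$, which by the span computation has finite index in $X^*(A_0)\cap V$. Consequently $\val(a)\in X_*(A_\Theta)$ annihilates $V$, i.e. $\val(a)\in X_*(A_G)\otimes\R$; since $X_*(A_G)$ is saturated in $X_*(A_\Theta)$ (a direct check with the split tori of $\GL_{2n}$, cf. the corresponding step in \cite[Lemma 8.4]{smith2018}), $\val(a)\in X_*(A_G)=\val(A_G)$. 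Choosing $z\in A_G$ with $\val(z)=\val(a)$ gives $az^{-1}\in A_\Theta^1$, so $a\in A_\Theta^1A_G$, contradicting the choice of $a$. Combining the two verifications yields $a\in A_{\Theta\cap w\Omega}^{-w\Omega}\setminus A_{\Theta\cap w\Omega}^1A_{w\Omega}$, which establishes \eqref{eq-unitary-std-cone}.

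The main obstacle is the middle step: correctly squeezing the linear-algebraic fact $\spn_\R(\Theta\cup w\Omega)=V$ out of the properness of the parabolic $M_\Theta\cap{}^wM_\Omega$ in $M_{w\Omega}$ together with the maximality of $P_\Theta$ — in particular, being careful that the relevant element of $w\Omega$ need only lie \emph{outside} $\Phi_\Theta$, not outside $\Theta$. The only other point requiring care is the valuation bookkeeping in the exclusion step, especially the saturation of $A_G$ inside $A_\Theta$, which is where working in $\GL_{2n}$ rather than a general reductive group is used; everything else is routine manipulation of dominance conditions.
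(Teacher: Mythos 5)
Your proof is correct. Note, however, that the paper does not actually prove \Cref{technical-torus-nbhd-Unitary}: it defers to \cite[Lemma 8.4]{smith2018} (and \cite[Lemma 5.2.13]{smith-phd2017}) with the remark that ``the proof is essentially the same,'' the argument there being an explicit computation with block-scalar diagonal tori. What you supply is a self-contained, coordinate-free replacement. Your preliminary reductions are all valid in this setting: since $A_0$ is a maximal $(\theta,F)$-split torus and $\Phi_0^\theta=\emptyset$, indeed $S_{\Theta'}=A_{\Theta'}$ for every standard Levi and $S_{\Delta_0}=A_G$. The two halves of the containment are handled correctly: the dominance condition on $A_{\Theta\cap w\Omega}^{-w\Omega}$ follows from positivity of $w\Omega$ (which is exactly where the choice $w\in[W_\Theta\backslash W_0/W_\Omega]$ enters) together with $\alpha(a)=1$ for $\alpha\in\Theta$; and the exclusion uses the key span computation $\spn_\R(\Theta\cup w\Omega)=\spn_\R\Phi_0$, which you correctly extract from maximality of $P_\Theta$ plus properness of $M_\Theta\cap{}^wM_\Omega$ in ${}^wM_\Omega$ via the identity $\Phi_\Theta=\Phi_0\cap\spn_\R\Theta$. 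The final valuation step (the annihilator of $\spn_\R\Phi_0$ in $X_*(A_0)\otimes\R$ is $X_*(A_G)\otimes\R$, and $X_*(A_G)$ is saturated, so $\val(a)\in\val(A_G)$ forces $a\in A_\Theta^1A_G$) is exactly the point where one must use the specific group $\GL_{2n}$, and you flag it appropriately. The abstract route generalizes more readily to other groups where $\Phi_0^\theta=\emptyset$, at the cost of the saturation check that the matrix computation gets for free.
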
 
Recall that $S_\Theta^1 = S_\Theta(\of)$ and $A_{\Theta\cap w\Omega}^1 = A_{\Theta\cap w\Omega}(\of)$.
Let $P = MN$ be a maximal $\theta$-split parabolic subgroup of $G$.  By \Cref{KT08-lem-2.5}, there exists $g\in (\Hbf \mathbf T_0)(F)$ such that $P = g P_\Theta g^{-1}$, where $P_\Theta$ is a $\Delta_0$-standard maximal $\theta$-split parabolic subgroup. 
We may take $S_M = g S_\Theta g^{-1}$ and then we have $S_M^- = g S_\Theta^- g^{-1}$.  
Let $y\in P \backslash G / Q$, given by $y =g w \gamma$, where $w\in [W_{\Theta} \backslash W_0 / W_\Omega]$.
We observe that ${}^yL = g(M_{w\Omega}) g^{-1}$.  In particular, $M \cap {}^y L = g (M_{\Theta \cap w\Omega}) g^{-1}$ and 
$A_{M \cap {}^y L} = g (A_{\Theta \cap w\Omega})g^{-1}$.  The dominant part of the torus $A_{M \cap {}^y L}$ will be denoted by $A_{M \cap {}^y L}^{- {}^yL}$ and is determined by the simple roots $gw\Omega$ of the maximal $(\theta,F)$-split torus ${}^gA_0$ in ${}^yL$.
\begin{lem}\label{technical-torus-nbhd-Unitary-general}
Let $P= MN$ be any maximal $\theta$-split parabolic subgroup of $G$ with $\theta$-stable Levi $M$ and unipotent radical $N$. 
Choose a maximal subset $\Theta$ of $\Delta_0$ and an element $g\in (\Hbf \mathbf T_0)(F)$ such that $P = gP_\Theta g^{-1}$.
 Let $y=g w \gamma\in P \backslash G / Q$, where $w\in [W_{\Theta} \backslash W_0 / W_\Omega]$, such that $M\cap {}^y L$ is a proper Levi subgroup of ${}^y L$. Then we have the containment 
\begin{align}\label{eq-unitary-gen-cone}
S_M^- \setminus S_M^1 S_{G} \subset A_{M \cap {}^y L}^{- {}^yL} \setminus A_{M\cap {}^y L}^1A_{{}^y L}.
\end{align}
\end{lem}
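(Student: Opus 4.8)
The plan is to reduce the general containment \eqref{eq-unitary-gen-cone} to its $\Delta_0$-standard counterpart \eqref{eq-unitary-std-cone} from \Cref{technical-torus-nbhd-Unitary} by transporting every torus and every cone that appears along the inner automorphism $\Int g$, where $g\in(\Hbf\mathbf{T}_0)(F)$ is the element (supplied by \Cref{KT08-lem-2.5}) with $P=gP_\Theta g^{-1}$. First I would reconcile the two geometric pictures: because $g^{-1}\theta(g)\in M_0\subseteq M_\Theta$ normalizes $P_\Theta^{\mathrm{op}}=\theta(P_\Theta)$, one gets $M=P\cap\theta(P)=gM_\Theta g^{-1}$, and because $Q=\gamma^{-1}P_\Omega\gamma$ one gets ${}^yL={}^{gw\gamma}L=gM_{w\Omega}g^{-1}$ and hence $M\cap{}^yL=gM_{\Theta\cap w\Omega}g^{-1}$ (these last identities are already recorded in the discussion preceding the statement). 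In particular, the hypothesis that $M\cap{}^yL$ is a proper Levi subgroup of ${}^yL$ is precisely the $\Int g$-image of the hypothesis of \Cref{technical-torus-nbhd-Unitary} that $M_{\Theta\cap w\Omega}$ is a proper Levi subgroup of $M_{w\Omega}$, so that lemma does apply to the pair $(\Theta,w)$.

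Next I would check that each of the six sets entering \eqref{eq-unitary-gen-cone} is the $\Int g$-image of the corresponding set in \eqref{eq-unitary-std-cone}. Three of these are matched by definition (as arranged in the paragraphs before the statement): $S_M^-=gS_\Theta^- g^{-1}$; $A_{M\cap{}^yL}=gA_{\Theta\cap w\Omega}g^{-1}$ together with its dominant part $A_{M\cap{}^yL}^{-{}^yL}=gA_{\Theta\cap w\Omega}^{-w\Omega}g^{-1}$, the simple roots $w\Omega$ of $A_0$ in $M_{w\Omega}$ being carried by $\Int g$ to the simple roots $gw\Omega$ of ${}^gA_0$ in ${}^yL$; and $A_{{}^yL}=gA_{w\Omega}g^{-1}$. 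For the bounded parts, $\Int g$ restricts to a topological group isomorphism of $S_\Theta(F)$ onto $S_M(F)$ and of $A_{\Theta\cap w\Omega}(F)$ onto $A_{M\cap{}^yL}(F)$, hence sends maximal bounded subgroups to maximal bounded subgroups: $S_M^1=gS_\Theta^1 g^{-1}$ and $A_{M\cap{}^yL}^1=gA_{\Theta\cap w\Omega}^1 g^{-1}$. Finally $S_{\Delta_0}=S_{M_{\Delta_0}}=S_G=A_G$ by \Cref{unitary-theta-split-comp}, which is central and therefore fixed by $\Int g$; so $S_M^1S_G=g(S_\Theta^1S_{\Delta_0})g^{-1}$ and, in the same way, $A_{M\cap{}^yL}^1A_{{}^yL}=g(A_{\Theta\cap w\Omega}^1A_{w\Omega})g^{-1}$.

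With these identifications in hand, \eqref{eq-unitary-gen-cone} is exactly the image of \eqref{eq-unitary-std-cone} under the group automorphism $\Int g$, which preserves set inclusions; since \eqref{eq-unitary-std-cone} holds by \Cref{technical-torus-nbhd-Unitary}, so does \eqref{eq-unitary-gen-cone}. The step I expect to demand the most care is the dominant-part identification $A_{M\cap{}^yL}^{-{}^yL}=gA_{\Theta\cap w\Omega}^{-w\Omega}g^{-1}$: one has to be explicit about which root system and base define ``the dominant part of $A_{M\cap{}^yL}$ in ${}^yL$'', and verify that conjugation by $g$, which replaces a torus character $\chi$ by $\chi\circ\Int g^{-1}$, converts the inequalities $|\beta(\cdot)|\le 1$ for $\beta\in w\Omega\setminus(\Theta\cap w\Omega)$ into the inequalities cutting out $A_{M\cap{}^yL}^{-{}^yL}$. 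Everything else is a routine transport of structure along $\Int g$.
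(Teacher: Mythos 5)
Your proposal is correct and follows essentially the same route as the paper: apply \Cref{technical-torus-nbhd-Unitary} to the pair $(\Theta,w)$ and transport both sides of \eqref{eq-unitary-std-cone} along $\Int g$, the only point requiring care being the identification $A_{M\cap{}^yL}^{-{}^yL}=g\bigl(A_{\Theta\cap w\Omega}^{-w\Omega}\bigr)g^{-1}$ via the roots $g\beta$ for $\beta\in w\Omega\setminus(\Theta\cap w\Omega)$, which is exactly how the paper argues. No gaps.
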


\begin{proof}
The $(\theta,F)$-split component $S_\Theta$ of $M_\Theta$ is equal to its $F$-split component $A_\Theta$; moreover, we have that $S_M = A_M$.
We also have that $S_M^- = g S_\Theta^- g^{-1}$ and $S_M^1 = g S_\Theta^1 g^{-1}$; moreover,  since $S_G = S_{\Delta_0}$ is central in $G$ we obtain 
\begin{align}\label{conj-std-split-cone}
S_M^- \setminus S_G S_M^1 = g (S_\Theta^-) g^{-1} \setminus S_G g (S_\Theta^1) g^{-1}.
\end{align}
By \Cref{technical-torus-nbhd-Unitary}, we have that
\begin{align*}
S_\Theta^- \setminus S_\Theta^1 S_{\Delta_0} \subset A_{\Theta \cap w\Omega}^{-w\Omega} \setminus A_{\Theta \cap w\Omega}^1A_{w\Omega}.
\end{align*}
By the equality in \eqref{conj-std-split-cone}, it suffices to show that
\begin{align}\label{conj-std-split-cone-Levi}
A_{M \cap {}^y L}^{- {}^yL} \setminus A_{M\cap {}^y L}^1A_{{}^y L}
&= g\left(A_{\Theta \cap w\Omega}^{-w\Omega}\right)g^{-1} \setminus g\left(A_{\Theta \cap w\Omega}^1\right)g^{-1} \ g\left(A_{w\Omega}\right)g^{-1}.
\end{align}
Indeed, if \eqref{conj-std-split-cone-Levi} holds, then we have
\begin{align*}
S_M^- \setminus S_G S_M^1 &= g S_\Theta^- g^{-1} \setminus S_G g S_\Theta^1 g^{-1} \\
& \subset g\left(A_{\Theta \cap w\Omega}^{-w\Omega}\right)g^{-1} \setminus g\left(A_{\Theta \cap w\Omega}^1\right)g^{-1} \ g\left(A_{w\Omega}\right)g^{-1} 
	& \text{(\Cref{technical-torus-nbhd-Unitary})}\\
&  = A_{M \cap {}^y L}^{- {}^yL} \setminus A_{M\cap {}^y L}^1A_{{}^y L},
\end{align*}
as claimed.
The truth of \eqref{conj-std-split-cone-Levi} immediately follows from how we determine the dominant part of $A_{M\cap {}^yL}$.
As above, we have that $M \cap {}^y L = g (M_{\Theta \cap w\Omega}) g^{-1}$ and 
$A_{M \cap {}^y L} = g (A_{\Theta \cap w\Omega})g^{-1}$.
Moreover, $A_{M \cap {}^y L}^1 = g (A_{\Theta \cap w\Omega}^1)g^{-1}$.
Given a root $\alpha \in \Phi_0$ we obtain a root $g\alpha$ of ${}^gA_0$ in $G$ by setting $g\alpha = \alpha \circ \Int g^{-1}$, as usual.
Explicitly, we have that
\begin{align}\label{dominant-in-Levi-cone}
A_{M\cap {}^yL}^{-{}^yL} = \{ a \in A_{M\cap {}^yL} : |g\beta(a)|\leq 1, \ \beta \in w \Omega \setminus \Theta \cap w\Omega \}.
\end{align} 
In fact, we have that $M\cap {}^yL$ is determined (as Levi subgroup of ${}^y L$) by the simple roots 
 $g(\Theta \cap w\Omega) \subset gw\Omega$ 
of ${}^gA_0$ in ${}^yL = gwM_\Omega$. 
It is immediate that 
\begin{align}\label{dominant-in-Levi-cone-containment}
A_{M \cap {}^y L}^{-{}^y L}  = g \left(A_{\Theta \cap w\Omega}^{-w\Omega}\right) g^{-1},
\end{align}
from which \eqref{conj-std-split-cone-Levi} follows, completing the proof of the lemma.
\end{proof}
\addcontentsline{toc}{section}{Acknowledgements}
\subsection*{Acknowledgements}
I would like to thank my doctoral advisor Fiona Murnaghan for her support and guidance throughout the initial work on this project.
Thank you to Rapha\"el Beuzart-Plessis for providing copies of his 2017 Cours Peccot lecture notes and for his generous explanations.
I would also like to thank the referee for their valuable comments and, in particular, for the suggested improvements to \Cref{prop-exist-reg-dist-ds-unitary,cor-st-not-Gal-inv}. 
Finally, thank you to the editor for directing my attention to the forthcoming work of Beuzart-Plessis and suggesting the inclusion of \Cref{sec-exhaustion}.  

\bibliographystyle{amsplain}
\bibliography{jerrod-refs}
%

\end{document}